\newcommand{\comment}[1]{}
\definecolor{mydarkgreen}{rgb}{0.0,0.8,0.0}
\newtheorem{algo}{Algorithm}[section]
\newcommand{\R}{\mathbb R}
\newcommand{\N}{\mathbb N}
\newcommand{\Z}{\mathbb Z}
\newcommand{\OO}{\mathcal{O}}
\newcommand{\relint}{\mathrm{relint}\;}
\newcommand{\dom}{\mathrm{dom}\,}
\newcommand{\aff}{\mathrm{aff}}
\newcommand{\conv}{\mathrm{conv}}
\newcommand{\vol}{\mathrm{vol}}
\newcommand{\inter}{\mathrm{int}\,}
\newcommand{\diam}{\mathrm{diam}}
\newcommand{\Ora}{\mathrm{O}_{\alpha,\delta}}
\begin{document}

\title*{Mirror-Descent Methods in Mixed-Integer Convex Optimization}
\author{Michel Baes, Timm Oertel, Christian Wagner, and Robert Weismantel}
\institute{Michel Baes, Timm Oertel, Christian Wagner, Robert Weismantel \at ETH Zurich, IFOR,\\
    \email{\{michel.baes, timm.oertel, christian.wagner, robert.weismantel\}@ifor.math.ethz.ch}
}
%
%
\maketitle

\abstract{In this paper, we address the problem of
  minimizing a convex function $f$ over a convex set,
  with the extra constraint that some variables must be
  integer.
  This problem, even when $f$ is a piecewise linear
  function, is NP-hard.
  We study an algorithmic approach to this problem,
  postponing its hardness to the realization of an oracle.
  If this oracle can be realized in polynomial time, then
  the problem can be solved in polynomial time as well.
  For problems with two integer variables, we show that
  the oracle can be implemented efficiently, that is, in
  $\OO(\ln(B))$ approximate minimizations of $f$ over the continuous
  variables, where $B$ is a known bound on the absolute
  value of the integer variables.
  Our algorithm can be adapted to find the second best point
  of a purely integer convex optimization problem in two dimensions, and
  more generally its $k$-th best point.
  This observation allows us to formulate a finite-time
  algorithm for mixed-integer convex optimization.}

\section{Introduction}
One of the highlights in the list of publications of Martin Gr\"otschel is his joint book with L\'aszl\'o Lov\'asz and Alexander Schrijver on Geometric Algorithms and Combinatorial Optimization \cite{GroetschelLovaszSchrijver-Book88}. This book develops a beautiful and general theory of optimization over (integer) points in convex sets. The generality comes from the fact that the  convex sets under consideration are presented by oracles (membership, separation in different variations, optimization).
The algorithms and their efficiency typically depend on the oracle presentation of the underlying convex set.
This is precisely the theme of this paper as well:
we present an algorithmic framework for solving mixed-integer convex optimization problems that is based on an
oracle.
Whenever the oracle can be realized efficiently, then the overall running time of the
optimization algorithm is efficient as well.

One of the results from the book
\cite{GroetschelLovaszSchrijver-Book88} that is perhaps
closest to our results is the following.
Here and throughout the paper $B(p,r)$ denotes a ball of
radius $r$ with center $p$.

\begin{theorem}\label{thm:GLS}
\cite[Theorem~6.7.10]{GroetschelLovaszSchrijver-Book88}
Let $n$ be a fixed integer and $K\subseteq\R^n$ be any convex set given by a weak separation oracle and for which there exist $r,R>0$ and $p\in K$ with $B(p,r) \subseteq K \subseteq B(0,R)$. There exists an oracle-polynomial algorithm that, for every fixed $\epsilon > 0$, either finds an integral point in $K+B(0,\epsilon)$ or concludes that $K \cap \Z^n = \emptyset$.
\end{theorem}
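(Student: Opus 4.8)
The plan is to adapt Lenstra's integer-programming algorithm to the oracle setting, organizing the computation as a recursion on the dimension $n$ whose conceptual engine is Khintchine's flatness theorem. Recall that the flatness theorem provides a constant $\omega(n)$, depending only on $n$, such that any convex body $C\subseteq\R^n$ either contains a point of $\Z^n$ or admits a nonzero $c\in\Z^n$ with lattice width $\max_{x\in C}\langle c,x\rangle-\min_{x\in C}\langle c,x\rangle\le\omega(n)$. I would use this to branch: a flat body has all of its integer points confined to $\OO(1)$ parallel lattice hyperplanes, and recursing into each such hyperplane lowers the dimension by one. The $\eps$-slack in the conclusion is exactly what makes this workable with weak oracles, since it lets us tolerate the approximations introduced by the ellipsoid method and by lattice reduction.

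First I would bring $K$ into a well-rounded position. Running the ellipsoid method with the weak separation oracle, one computes in oracle-polynomial time an affine map $T$ for which $T(K)$ is sandwiched between two concentric balls whose radii differ by a factor depending only on $n$ (a weak L\"owner--John rounding); the hypotheses $B(p,r)\subseteq K\subseteq B(0,R)$ guarantee termination with a running time controlled by $\ln(R/r)$, as is standard in \cite{GroetschelLovaszSchrijver-Book88}. In the transformed coordinates the lattice $\Z^n$ becomes $\Lambda=T(\Z^n)$, and I would compute an LLL-reduced basis of $\Lambda$ together with one of its dual $\Lambda^*$.

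Next comes the dichotomy. Using Babai's nearest-plane rounding on the reduced basis of $\Lambda$, I would round the center of $T(K)$ to a lattice point; if the inradius of $T(K)$ exceeds the covering-radius bound furnished by the reduced basis, this point lies within tolerance of $T(K)$, and mapping it back through $T^{-1}$ yields an integral point in $K+B(0,\eps)$, so we are done. Otherwise the shortest vector of the reduced dual basis gives, via the effective flatness theorem, a nonzero $c\in\Z^n$ in which $K$ is flat (here the dual direction in the transformed coordinates corresponds to an integer direction in the original ones, since $\langle T^{-\top}c,Ty\rangle=\langle c,y\rangle$). The quantities $\min_{x\in K}\langle c,x\rangle$ and $\max_{x\in K}\langle c,x\rangle$ are computed approximately by weak optimization over the oracle, and all integer points of $K$ lie in the hyperplanes $\{x:\langle c,x\rangle=t\}$ for the $\OO(1)$ integers $t$ between these values. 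For each such $t$ I would build a weak separation oracle for the slice $K\cap\{x:\langle c,x\rangle=t\}$ from the oracle for $K$, identify the hyperplane with $\R^{n-1}$, and recurse, with the base case $n=1$ being an interval handled directly. Since the branching factor is the constant $\omega(n)+1$ and the depth is $n$, the recursion tree has $\OO(1)$ leaves for fixed $n$, and each node performs polynomially many oracle calls, yielding the claimed oracle-polynomial bound.

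The hard part will be the error analysis that makes the $\eps$-tolerance and the weak-oracle approximations mutually consistent across the recursion. When slicing, the inradius of $K\cap\{x:\langle c,x\rangle=t\}$ may degrade and the $r,R$ bounds must be re-derived at each level so that the ellipsoid method remains applicable; the tolerance passed to the recursive calls must be chosen so that (i) no genuine integer point of $K$ is lost because a thin slice is wrongly reported empty, and (ii) a point returned from the bottom of the recursion, after all the inverse transformations, provably lands in $K+B(0,\eps)$ rather than merely near some lower-dimensional slice. Because LLL delivers only an approximately shortest dual vector, one must also absorb that approximation factor into the flatness constant, enlarging $\omega(n)$ by a factor depending only on $n$; this keeps the branching bounded but requires stating the recursion's invariants with care.
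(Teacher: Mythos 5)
Your proposal follows essentially the same route as the proof the paper points to: \cite[Theorem~6.7.10]{GroetschelLovaszSchrijver-Book88} is proved there by exactly this combination of the ellipsoid method (weak L\"owner--John rounding via the weak separation oracle) with a Lenstra-type recursion driven by lattice basis reduction and the flatness theorem, using the $\epsilon$-slack to absorb the weak-oracle and LLL approximation errors. Your sketch, including the branching into $\OO(1)$ lattice hyperplanes and the dimension recursion, matches that argument.
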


The main distinction between results presented here and results from \cite{GroetschelLovaszSchrijver-Book88} of such flavor as Theorem \ref{thm:GLS} results from dropping the assumption that we know  a ball $B(p,r) \subseteq K$.
At first glance, this might sound harmless, but it is not since the proof of Theorem \ref{thm:GLS} in \cite{GroetschelLovaszSchrijver-Book88} uses a combination of the ellipsoid algorithm \cite{Khachiyan} and a Lenstra-type algorithm \cite{Lenstra83}.
In fact, dropping the assumption to know  a ball $B(p,r) \subseteq K$  requires new algorithmic frameworks.

Let us now make precise our assumptions.
We study a general mixed-integer convex optimization problem
of the kind
\begin{equation}\label{eq:general_MIP_f}
 \min\{f(\hat x,y): \left(\hat x,y\right) \in S\cap(\Z^n\times\R^d)\},
\end{equation}
where the function $f:\R^{n+d}\to\R_+\cup\{+\infty\}$ is a
nonnegative proper convex function, i.e., there is a point
$z\in\R^{n+d}$ with $f(z)<+\infty$.
Moreover, $S\subseteq\R^{n+d}$ is a convex set that is defined by a finite number of convex functional
constraints, i.e.,
$S := \{(x,y)\in \R^{n+d} : g_i(x,y) \le 0 \text{
  for } 1 \le i \le m\}$.
We denote by $\langle\cdot,\cdot\rangle$ a scalar product.
The functions $g_i: \R^{n+d} \to \R$ are
differentiable convex functions and encoded by a so-called
\emph{first-order oracle}. Given any point $(x_0,y_0) \in
\R^{n+d}$, this oracle returns, for every $i \in \{1,
\dots, m\}$, the function value $g_i(x_0,y_0)$ together with a subgradient $g'_i(x_0,y_0)$, that is, a vector satisfying:
\[
g_i(x,y) - g_i(x_0,y_0) \geq \langle g'_i(x_0,y_0),\left(x-x_0,y-y_0\right)\rangle  
\]
for all $(x,y) \in \R^{n+d}$.

In this general setting, very few algorithmic frameworks exist. The most commonly used one is ``outer approximation'', originally proposed in \cite{DuranGrossmann86} and later on refined in \cite{ViswanathanGrossmann90, FletcherLeyffer94, Bonamietal2008}.
This scheme is known to be finitely converging, yet there is no analysis regarding the number of iterations it takes to solve problem \eqref{eq:general_MIP_f} up to a certain given accuracy.

In this paper we present oracle-polynomial algorithmic
schemes that are (i) amenable to an analysis and (ii) finite
for any mixed-integer convex optimization problem.
Our schemes also give rise to the fastest algorithm so far for
solving mixed-integer convex optimization problems in
variable dimension with at most two integer variables.

\section{An algorithm based on an ``improvement oracle''}

\comment{
The generic optimization problem \eqref{eq:general_MIP} is slightly more general to the following formulation, which we will use throughout the paper:

\begin{equation}\label{eq:general_MIP_f}
 \min\{f(\hat x,y): \left(\hat x,y\right) \in S\cap(\Z^n\times\R^d)\},
\end{equation}

where the function $f:\R^{n+d}\to\R_+\cup\{+\infty\}$ is a nonnegative proper\footnote{A function $f:\R^{n+d}\to\R_+\cup\{+\infty\}$ is \emph{proper} if there is a point $z\in\R^{n+d}$ where $f(z)<+\infty$.} convex
function and $S\subseteq\R^{n+d}$ is a convex set that can
be defined by a finite number of convex functional
constraints. The problem class \eqref{eq:general_MIP}
contains the class \eqref{eq:general_MIP_f}: since $S$ can
be written as $S:=\{(x,y)\in\R^{n+d}:g_i(x,y)\leq 0\textrm{
  for }1\leq i\leq m\}$, the objective of problem
\eqref{eq:general_MIP_f} can be made linear by including the
convex constraint $f(x,y)\leq t$ while the objective
consists in minimizing the continuous variable $t$. On the
other hand, if \eqref{eq:general_MIP} has a known upper
bound to its optimum, we can replace its linear objective
function by an affine one that is negative on $S$, and take
for $f$ its opposite to get a problem in the class
\eqref{eq:general_MIP_f}.
}

We study in this paper an algorithmic approach to solve
\eqref{eq:general_MIP_f}, postponing its hardness to the
realization of an improvement oracle defined below.
If this oracle can be realized in polynomial time, then the
problem can be solved in polynomial time as well.
An oracle of this type has already been used in a number of
algorithms in other contexts, such as in
\cite{Arora_Kale_07} for semidefinite problems.

\begin{definition}[Improvement Oracle]
Let $\alpha,\delta \ge 0$.
For every $z \in S$, the oracle
\begin{enumerate}
  \item[\textbf{a.}] returns
    $\hat{z} \in S \cap (\Z^n \times \R^d)$ such that
    $f(\hat{z}) \le (1+\alpha) f(z) + \delta$, and/or
  \item[\textbf{b.}] asserts correctly that there is no
    point $\hat{z} \in S \cap (\Z^n \times \R^d)$ for which
    $f(\hat{z}) \le f(z)$.
\end{enumerate}
We denote the query to this oracle at $z$ by
$\mathrm{O}_{\alpha,\delta}(z)$.
\end{definition}

As stressed in the above definition, the oracle might content itself with a feasible point $\hat z$ satisfying the inequality in \textbf{a} without addressing the problem in \textbf{b}. However, we do not exclude the possibility of having an oracle that can occasionally report both facts.
In that case, the point $\hat z$ that it outputs for the input point $z\in S$ must satisfy:
\[
f(\hat z)-\hat f^*\leq \alpha f(z) + \delta + (f(z)-\hat f^*)\leq \alpha f(z) + \delta \leq \alpha \hat f^* +
\delta,
\]
where $\hat f^*$ is the optimal objective value of \eqref{eq:general_MIP_f}.
Thus $f(\hat z)\leq (1+\alpha) f(z) + \delta$, and it is not possible to hope for a better point of $S$ from the oracle. We can therefore interrupt the computations and output $\hat z$ as the final result of our method.

In the case where $\hat f^*>0$ and $\delta=0$, the improvement oracle
might be realized by a relaxation of the problem of finding a suitable
$\hat z$: in numerous cases, these relaxations come with a
guaranteed value of $\alpha$. In general, the realization of
this oracle might need to solve a problem as difficult as
the original mixed-integer convex instance, especially when $\alpha=\delta=0$. Nevertheless, we will point out several situations where this oracle can actually be realized quite efficiently, even with $\alpha=0$.

The domain of $f$, denoted by $\dom f$, is the set of all the points $z\in\R^{n+d}$ with $f(z)<+\infty$. For all $z\in\dom f$, we denote by $f'(z)$ an element of the subdifferential $\partial f(z)$ of $f$. We represent by $\hat z^* = (\hat x^*,y^*)$ a minimizer of \eqref{eq:general_MIP_f}, and set $\hat f^*:=f(\hat z^*)$; more generally, we use a hat ($\hat\cdot$) to designate vectors that have their $n$ first components integral by definition or by construction.

Let us describe an elementary method for solving \emph{Lipschitz continuous} convex problems on $S$ approximately. Lipschitz continuity of $f$ on $S$, an assumption we make from now on, entails that, given a norm
$||\cdot||$ on $\R^{n+d}$, there exists a constant $L>0$ for which:
\[
|f(z_1)-f(z_2)|\leq L||z_1-z_2||
\]
for every $z_1,z_2\in S$.
Equivalently, if $||\cdot||_*$ is the dual norm of
$||\cdot||$, we have $||f'(z)||_*\leq L$ for every
$f'(z)\in\partial f(z)$ and every $z\in\dom f$.

Our first algorithm is a variant of the well-known
Mirror-Descent Method (see Chapter~3 of
\cite{Nemirovski_Yudin_book}).
It requires a termination procedure, which used alone
constitutes our second algorithm as a minimization algorithm
on its own.
However, the second algorithm requires as input an
information that is a priori not obvious to get:
a point $z \in S$ for which $f(z)$ is a (strictly)
positive lower bound of $\hat f^*$.

Let $V:\R^{n+d}\to\R_+$ be a differentiable $\sigma$-strongly convex function with respect to the norm $||\cdot||$, i.e., there exists a $\sigma>0$ for which, for every $z_1,z_2\in\R^{n+d}$, we have:
\[
V(z_2)-V(z_1)-\langle V'(z_1),z_2-z_1\rangle\geq \frac{\sigma}{2}||z_2-z_1||^2.
\] 
We also use the conjugate $V_*$ of $V$ defined by $V_*(s):=\sup\{\langle s,z\rangle-V(z):z\in\R^{n+d}\}$ for every $s\in\R^{n+d}$. We fix $z_0\in S$ as the starting point of our algorithm and denote by 
$M$ an upper bound of $V(\hat z^*)$. We assume that the solution of the problem $\sup\{\langle s,z\rangle-V(z):z\in\R^{n+d}\}$ exists and can be computed easily, as well as the function $\rho(w):=\min\{||w-z||:z\in S\}$ for every $w\in\R^{n+d}$, its subgradient, and the minimizer $\pi(w)$. In an alternative version of the algorithm we are about to describe, we can merely assume that the problem $\max\{\langle s,z\rangle-V(z):z\in S\}$ can be solved efficiently.

A possible building block for constructing an algorithm to
solve \eqref{eq:general_MIP_f} is the continuous optimum of
the problem, that is, the minimizer of
\eqref{eq:general_MIP_f} without the integrality
constraints.
The following algorithm is essentially a standard procedure
meant to compute an approximation of this continuous
minimizer, lined with our oracle that constructs
simultaneously a sequence of mixed-integer feasible points
following the decrease of $f$.
Except in the rare case when we produce a provably suitable
solution to our problem, this algorithm provides a point
$z\in S$ such that $f(z)$ is a lower bound of $\hat f^*$.
Would this lower bound be readily available, we can jump
immediately to the termination procedure
(see Algorithm~\ref{algo:termination}).

\LinesNumberedHidden
\begin{algorithm}[htbp]
\DontPrintSemicolon
\KwData{$z_0\in S$.}
Set $\hat z_0:=z_0$, $w_0:=z_0$, $s_0:=0$, and $\hat f_0:=f(\hat z_0)$.\;
Select sequences $\{h_k\}_{k\geq 0}$, $\{\alpha_k\}_{k\geq 0}$, $\{\delta_k\}_{k\geq 0}$.\;
\For{$k=0,\dots,N$}{
	Compute $f'(z_k)\in\partial f(z_k)$ and $\rho'(w_k)\in\partial\rho(w_k)$.\;
	Set $s_{k+1}:=s_k-h_k f'(z_k)-h_k ||f'(z_k)||_*\rho'(w_k)$.\;
	Set $w_{k+1}:=\arg\max\{\langle s_{k+1},z\rangle-V(z):z\in\R^{n+d}\}$.\;
	Set $z_{k+1}:=\arg\min\{||w_{k+1}-z||:z\in S\}$.\;
	Compute $f(z_{k+1})$.\;
	\lIf{$f(z_{k+1})\geq \hat f_k$}{$\hat z_{k+1}:=\hat z_k$, $\hat f_{k+1}:=\hat f_k$.\;}
	\Else{
		Run $\mathrm{O}_{\alpha_{k+1},\delta_{k+1}}(z_{k+1})$.\;
		\uIf{the oracle reports {\bf a} and {\bf b}}{
			Terminate the algorithm and return the oracle output from \textbf{a}.\;
			}
		\uElseIf{ the oracle reports {\bf a} but not {\bf b}}{
			Set $\hat z_{k+1}$ as the oracle output and $\hat f_{k+1}:=\min\{f(\hat z_{k+1}),\hat f_k\}$.\;
			}
		\Else{
			Run the termination procedure with $z_0:=z_{k+1}$, $\hat z_0:=\hat z_{k+1}$,\;
			return its output, and terminate the algorithm.\;
			}
		}
	}
\caption{Mirror-Descent Method.\label{algo:mirror}} 
\end{algorithm} 

\begin{algorithm}[htbp]
\DontPrintSemicolon
\KwData{$z_0\in S$ with $f(z_0)\leq \hat f^*$,  $\hat z_0\in S\cap(\Z^n\times\R^d)$.}
Set $l_0:= f(z_0)$, $u_0:=f(\hat z_0)$.\;
Choose $\alpha,\delta\geq 0$. Choose a subproblem accuracy $\epsilon'>0$.\;
\For{$k\geq 0$}{
	Compute using a bisection method a point $z_{k+1} =\lambda z_k + (1-\lambda)\hat z_k$\;
	for $0\leq\lambda\leq 1$, for which $f(z_{k+1})-(l_k(\alpha+1)+u_k)/(\alpha+2)\in [-\epsilon',\epsilon']$.\;
	Run $\mathrm{O}_{\alpha,\delta} (z_{k+1})$.\;
	\uIf{the oracle reports {\bf a} and {\bf b}}{
		Terminate the algorithm and return the oracle output from \textbf{a}.\;
		}
	\uElseIf{ the oracle reports {\bf a} but not {\bf b}}{
		Set $\hat z_{k+1}$ as the oracle output,
                $l_{k+1}:=l_k$,
                $u_{k+1}:=\min\{f(\hat z_{k+1}),u_k\}$.\;
		}
	\Else{
		Set $\hat z_{k+1}:=\hat z_k$,
                $l_{k+1}:=f(z_{k+1})$,
                $u_{k+1}:=u_k$.\;
		}
	}
\caption{Termination procedure.\label{algo:termination}} 
\end{algorithm}

The following proposition is an extension of the standard proof of convergence of Mirror-Descent Methods. We include it here for the sake of completeness.

\begin{proposition}
Suppose that the oracle reports $\bf{a}$ for $k =
0,\ldots,N$ in Algorithm \ref{algo:mirror}, that is, it delivers an output $\hat z_k$ for every
iteration $k = 0,\ldots,N$. Then:
\[
\frac{1}{\sum_{k=0}^Nh_k}\sum_{k=0}^N\frac{h_kf(\hat
z_k)}{1+\alpha_k} - f(\hat z^*) \leq\frac{M}{\sum_{k=0}^Nh_k} +
\frac{2L^2}{\sigma}\cdot\frac{\sum_{k=0}^Nh_k^2}{\sum_{k=0}^Nh_k} +
\frac{1}{\sum_{k=0}^Nh_k}\sum_{k=0}^N\frac{h_k\delta_k}{1+\alpha_k}.
\]
\end{proposition}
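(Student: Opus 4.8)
The plan is to adapt the classical Mirror-Descent convergence analysis, tracking the Bregman-type potential $V_*(s_k)$ along the iterations and exploiting the projection step together with the oracle's guarantee. The starting point is the update rule $s_{k+1}=s_k-h_k g_k$, where I abbreviate $g_k:=f'(z_k)+\|f'(z_k)\|_*\,\rho'(w_k)$. Since $V_*$ is $(1/\sigma)$-smooth with respect to $\|\cdot\|_*$ (the standard conjugate duality fact that the conjugate of a $\sigma$-strongly convex function has Lipschitz gradient with constant $1/\sigma$), and since $w_{k+1}=V_*'(s_{k+1})$ by the definition of $w_{k+1}$ as the maximizer, I get the descent inequality
\[
V_*(s_{k+1})\le V_*(s_k)-h_k\langle g_k,w_k\rangle+\frac{h_k^2}{2\sigma}\|g_k\|_*^2.
\]
Here I use that $\langle w_k,\cdot\rangle$ appears because $V_*'(s_k)=w_k$. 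Telescoping this from $k=0$ to $N$, using $s_0=0$ and $V_*(0)=-\inf V\le 0$ (after normalizing $V\ge 0$ with $V(z_0)$ small, or more carefully bounding via $M\ge V(\hat z^*)$), yields a bound of the form $\sum_k h_k\langle g_k,w_k\rangle \le M + \frac{1}{2\sigma}\sum_k h_k^2\|g_k\|_*^2$.

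Next I would convert the linearized quantity $\langle g_k,w_k-\hat z^*\rangle$ into an actual function gap. The key geometric step is the projection: since $z_{k+1}=\pi(w_{k+1})$ is the nearest point of $S$ to $w_{k+1}$ and $\rho'(w_k)$ is a subgradient of the distance function, the term $\|f'(z_k)\|_*\langle\rho'(w_k),w_k-\hat z^*\rangle$ compensates for the gap $\langle f'(z_k),w_k-z_k\rangle$ incurred by evaluating the subgradient at the projected point $z_k$ rather than at $w_k$. Concretely, convexity of $f$ gives $f(z_k)-f(\hat z^*)\le\langle f'(z_k),z_k-\hat z^*\rangle$, and I split $z_k-\hat z^*=(z_k-w_k)+(w_k-\hat z^*)$; the distance-function subgradient term, with the factor $\|f'(z_k)\|_*$ and the $1$-Lipschitz property of $\rho$, is exactly designed to dominate $\langle f'(z_k),z_k-w_k\rangle$ and keep $w_k-\hat z^*$ in play. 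Using $\|g_k\|_*\le 2L$ (from $\|f'(z_k)\|_*\le L$ and $\|\rho'(w_k)\|_*\le 1$) bounds the quadratic term by $\frac{2L^2}{\sigma}\sum_k h_k^2$.

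With these pieces I obtain $\sum_k h_k\big(f(z_k)-f(\hat z^*)\big)\le M+\frac{2L^2}{\sigma}\sum_k h_k^2$. The final step incorporates the oracle: whenever case \textbf{a} fires, the output satisfies $f(\hat z_k)\le(1+\alpha_k)f(z_k)+\delta_k$, equivalently $f(z_k)\ge\frac{f(\hat z_k)-\delta_k}{1+\alpha_k}$. Substituting this lower bound for $f(z_k)$ into the accumulated inequality, dividing through by $\sum_k h_k$, and subtracting $f(\hat z^*)$ produces precisely the claimed estimate, with the $\sum_k h_k\delta_k/(1+\alpha_k)$ term arising from the $\delta_k$ correction.

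I expect the main obstacle to be the projection/distance-subgradient bookkeeping in the second paragraph: making rigorous that the extra term $\|f'(z_k)\|_*\,\rho'(w_k)$ in the update is exactly what is needed to control the discrepancy between the subgradient evaluated at the feasible $z_k$ and the unconstrained iterate $w_k$, so that the telescoped potential bound translates cleanly into a bound on $\sum_k h_k(f(z_k)-f(\hat z^*))$ rather than on some linearized surrogate living off the feasible set. The strong-convexity-to-smoothness transfer and the oracle substitution are routine by comparison.
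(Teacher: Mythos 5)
Your proposal is correct and follows essentially the same route as the paper's proof: the $(1/\sigma)$-smoothness of $V_*$, the identity $w_k=V_*'(s_k)$, the cancellation of $\langle f'(z_k),w_k-z_k\rangle$ by the term $\|f'(z_k)\|_*\langle\rho'(w_k),w_k-\hat z^*\rangle\ge\|f'(z_k)\|_*\,\rho(w_k)=\|f'(z_k)\|_*\|w_k-z_k\|$, the bound $\|g_k\|_*\le 2L$, and the oracle substitution $f(z_k)\ge(f(\hat z_k)-\delta_k)/(1+\alpha_k)$. The only cosmetic difference is that the paper telescopes the shifted potential $\phi_k=V_*(s_k)-\langle s_k,\hat z^*\rangle$ while you telescope $V_*(s_k)$ and reintroduce the $\langle s_{N+1},\hat z^*\rangle$ term afterwards (your intermediate display should read $\sum_k h_k\langle g_k,w_k-\hat z^*\rangle$, as your second paragraph in fact uses), which is mathematically the same bookkeeping.
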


\begin{proof}
\smartqed
Since $V$ is $\sigma$-strongly convex with
respect to the norm $||\cdot||$, its conjugate $V_*$ is
differentiable and has a Lipschitz continuous gradient of
constant $1/\sigma$ for the norm
$||\cdot||_*$, i.e.,
$V_\ast(y) - V_\ast(x) \le \langle V_\ast'(x),
  y-x \rangle + \frac{1}{2 \sigma} \|y-x\|_\ast^2$
(see~\cite[Chapter~X]{HiriartUrrutyLemarechal93b}).
Also $w_k = V_*'(s_k)$, in view of \cite[Theorem~23.5]{Rock_subgradients}.
Finally, for every $z\in S$, we can write
$\rho(w_k) + \langle\rho'(w_k),z-w_k\rangle\leq\rho(z)=0$.
Thus:
\begin{equation}\label{eq:interm_rho}
\langle\rho'(w_k),w_k-\hat z^*\rangle\geq \rho(w_k) = ||\pi(w_k)-w_k|| = ||z_k-w_k||.
\end{equation}
Also, $||\rho'(w_k)||_*\leq 1$, because for every $z\in\R^{n+d}$:
\begin{eqnarray}
\langle\rho'(w_k),z-w_k\rangle &\leq& \rho(z)-\rho(w_k) = ||z-\pi(z)||-||w_k-\pi(w_k)|| \nonumber\\
   &\leq& ||z-\pi(w_k)||-||w_k-\pi(w_k)|| \leq ||z-w_k||.\label{eq:rho<=1}
\end{eqnarray}
By setting $\phi_k:=V_*(s_k) - \langle s_k,\hat z^*\rangle$,
we can write successively for all $k \ge 0$:
{\allowdisplaybreaks
\begin{eqnarray*}
\phi_{k+1} & = & V_*(s_{k+1}) -
                \langle s_{k+1},\hat z^*\rangle\\
& \le & V_*(s_k) + \langle V_\ast'(s_k), s_{k+1}-s_k \rangle
        + \frac{1}{2 \sigma} \|s_{k+1}-s_k\|_\ast^2 -
        \langle s_{k+1},\hat z^*\rangle.\\
& = & (V_*(s_k) - \langle s_k,\hat z^*\rangle) + \left
      \langle V'_*(s_k)-\hat z^*,s_{k+1}-s_k \right \rangle
      + \frac{1}{2\sigma}\left\|s_{k+1}-s_k\right\|^2_*\\
& = & \phi_k - h_k \left \langle w_k-z_k,f'(z_k) \right
      \rangle + h_k \left \langle \hat z^* - z_k,f'(z_k)
      \right \rangle\\
& & - h_k||f'(z_k)||_*\left\langle w_k-\hat z^*,\rho'(w_k)
    \right \rangle +
    \frac{h_k^2 \|f'(z_k)\|_\ast^2}{2\sigma} \left
    \|\frac{f'(z_k)}{\|f'(z_k)\|_\ast} + \rho'(w_k)
    \right\|^2_*,
\end{eqnarray*}
}where the inequality follows from the Lipschitz continuity
of the gradient of $V_\ast$, and the last equality from the
identities
$V'_\ast(s_k) = w_k$, $s_{k+1} - s_k = -h_k f'(z_k) -
h_k \| f'(z_k)\|_\ast \rho'(w_k)$, and
$V_*(s_k) - \langle s_k,\hat z^* \rangle = \phi_k$.
By the definition of the dual norm, it holds
$-h_k \langle w_k-z_k,f'(z_k) \rangle \le
h_k \|f'(z_k)\|_\ast \|w_k-z_k\|$.
Moreover, convexity of $f$ implies
$h_k \langle \hat z^* - z_k,f'(z_k) \rangle \le
f(\hat z^\ast) - f(z_k)$.
Using this in the above expression we get:
{\allowdisplaybreaks
\begin{eqnarray*}
\phi_{k+1} & \le & \phi_k  + h_k ||f'(z_k)||_* \left
  (||w_k-z_k|| - \left \langle w_k - \hat z^*,\rho'(w_k)
  \right \rangle \right) \\
& & + h_k(f(\hat z^*) - f(z_k)) +
  \frac{h_k^2||f'(z_k)||_*^2}{2\sigma} \left( \left
  \|\frac{f'(z_k)}{||f'(z_k)||_*} \right \|_\ast +
  \left \|\rho'(w_k)\right\|_* \right)^2\\
& \le & \phi_k + h_k(f(\hat z^*)-f(z_k)) +
  \frac{2h_k^2||f'(z_k)||_*^2}{\sigma}\\
& \le & \phi_k + h_k \left( f(\hat z^*) -
  \frac{f(\hat z_k)-\delta_k}{1+\alpha_k} \right) +
  \frac{2h_k^2||f'(z_k)||_*^2}{\sigma},
\end{eqnarray*}
}where the second inequality follows from
\eqref{eq:interm_rho} and $\|\rho'(w_k)\|_* \le 1$, and the
third inequality from the fact that the oracle reports
${\bf a}$.
Summing up the above inequalities from $k:=0$ to $k:=N$ and
rearranging, it follows:
\[
\frac{1}{\sum_{k=0}^Nh_k }\sum_{k=0}^N\frac{h_k(f(\hat
z_k)-\delta_k)}{1+\alpha_k} - f(\hat z^*) \leq\frac{\phi_0 -
\phi_{N+1}}{\sum_{k=0}^Nh_k} +
\frac{2\sum_{k=0}^Nh_k^2||f'(z_k)||^2_*}{\sigma\sum_{k=0}^Nh_k}.
\]
Note that $||f'(z_k)||_*\leq L$, $\phi_0 =
\sup\{-V(z):z\in\R^{n+d}\}\leq 0$, and $ \phi_{N+1}\geq -V(\hat z^*)\geq -M$, yielding the
desired result. \qed 
\end{proof}

In the special case when $\alpha_k = \alpha$ and
$\delta_k = \delta$ for every $k\geq 0$, we can
significantly simplify the above results.
According to the previous proposition, we know that:
\[\left(\sum_{k=0}^Nh_k\right)\left(\frac{\hat f_N-\delta}{1+\alpha} - \hat f^*\right) = \left(\sum_{k=0}^Nh_k\right)\left(\frac{\min_{1\leq i\leq N}f(\hat z_i)-\delta}{1+\alpha} - \hat f^*\right) \]
\begin{equation}\label{eq:bound_mirror_interm}
 \leq \sum_{k=0}^N\frac{h_k(f(\hat z_k)-\delta)}{1+\alpha} -
\left(\sum_{k=0}^Nh_k\right)\hat f^* \leq M +
\frac{2L^2}{\sigma}\sum_{k=0}^Nh_k^2.
\end{equation}
We can divide both sides of the above inequality by
$\sum_{k=0}^Nh_k$, then determine the step-sizes
$\{h_k:0\leq k\leq N\}$ for which the right-hand
  side is minimized.
However, with this strategy, $h_0$ would depend on $N$, which is a priori unknown at the first iteration. Instead, as in \cite{Nesterov-Book04}, we use a step-size of the form $h_k = c/\sqrt{k+1}$ for an appropriate constant $c>0$, independent of $N$. Note that:
\[
\sum_{k=0}^N\frac{1}{k+1} =
\sum_{k=1}^{N+1}\frac{1}{k}\leq\int_1^{N+1}\frac{dt}{t} + 1
=\ln(N+1)+1.
\]
If we choose $c:=\sqrt{\frac{\sigma M}{2L^2}}$, the right-hand side of \eqref{eq:bound_mirror_interm} can be upper-bounded by $M\ln(N+1)+2M$. Finally, since
\[
\frac{1}{c}\sum_{k=0}^Nh_k = \sum_{k=0}^N\frac{1}{\sqrt{k+1}} =
\sum_{k=1}^{N+1}\frac{1}{\sqrt{k}}\geq\int_1^{N+2}\frac{dt}{\sqrt{t}}
= 2\sqrt{N+2}-2,
\]
we can thereby conclude that:
\begin{equation}\label{eq:complexity_mirror}
\frac{\hat f_N-(1+\alpha)\hat f^*-\delta}{1+\alpha}\leq
L\sqrt{\frac{M}{2\sigma}}\cdot\frac{\ln(N+1)+2}{\sqrt{N+2}-1}.
\end{equation}
As the right-hand side converges to $0$ when $N$
  goes to infinity, Algorithm~\ref{algo:mirror} converges
to an acceptable approximate solution or calls the
termination procedure.

Let us now turn our attention to the termination procedure.
We assume here that the oracle achieves a constant quality,
that is, that there exists $\alpha,\delta\geq 0$ for which
$\alpha_k = \alpha$ and $\delta_k = \delta$ for every
$k \ge 0$. 
\begin{proposition}\label{prop:termination_complexity}
Assume that $f(\hat z_0)\geq f(z_0)>0$, and that there is no point
$\hat z\in S\cap(\Z^n\times\R^d)$ for which $f(z_0)>f(\hat
z)$.
\begin{enumerate}[(a)]
\item \label{accuracy} The termination procedure cannot guarantee an accuracy
  better than: 
\begin{equation}\label{eq:best possible accuracy for termination}
f(\hat z)\leq \hat f^* + (2+\alpha) \left(\alpha \hat f^* + (1+\alpha)\epsilon'+ \delta\right).
\end{equation}
\item \label{steps} For every $\epsilon > 0$, the termination
procedure finds a point $\hat z\in S\cap(\Z^n\times\R^d)$ satisfying:
\[
f(\hat z)-\hat f^*\leq \epsilon \hat f^* + (2+\alpha)
\left(\alpha\hat f^* + (1+\alpha)\epsilon'+ \delta\right)
\]
within
\[
\max\left\{\left\lceil\ln\left.\left(\frac{f(\hat
z_0)-f(z_0)}{f(z_0)\epsilon}\right)\right/\ln\left(\frac{2+\alpha}{1+\alpha}\right)\right\rceil,0\right\}
\]
iterations.
\end{enumerate}
\end{proposition}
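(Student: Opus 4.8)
The plan is to reduce the entire statement to the behaviour of the ``optimality gap'' $g_k:=u_k-l_k$ maintained by Algorithm~\ref{algo:termination}. First I would record the invariants that make this gap meaningful, namely $l_k\leq\hat f^*\leq u_k$ for every $k$. The upper inequality holds because $u_k$ is, by construction, the value $f$ takes at a feasible point of $S\cap(\Z^n\times\R^d)$. The lower inequality holds because $l_0=f(z_0)\leq\hat f^*$ by hypothesis, the bound $l_k$ is only ever updated in the branch where the oracle reports \textbf{b} alone, i.e. when there is no $\hat z\in S\cap(\Z^n\times\R^d)$ with $f(\hat z)\leq f(z_{k+1})$, which forces $\hat f^*>f(z_{k+1})=l_{k+1}$. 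Since the procedure can output a point whose value is at most $u_k$, it suffices to bound $u_k-\hat f^*\leq g_k$; everything then follows from a recursion for $g_k$.

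Next I would derive that single recursion from the two non-terminating branches, writing $t_k:=\bigl((\alpha+1)l_k+u_k\bigr)/(\alpha+2)$ for the bisection target, so that $f(z_{k+1})\in[t_k-\epsilon',t_k+\epsilon']$ and, using $t_k=l_k+g_k/(\alpha+2)$, one has $u_k-t_k=\tfrac{1+\alpha}{\alpha+2}g_k$ and $(1+\alpha)t_k-l_k=\alpha l_k+\tfrac{1+\alpha}{\alpha+2}g_k$. In the branch ``\textbf{b} but not \textbf{a}'' the bound $l_{k+1}=f(z_{k+1})\geq t_k-\epsilon'$ gives $g_{k+1}\leq\tfrac{1+\alpha}{\alpha+2}g_k+\epsilon'$. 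In the branch ``\textbf{a} but not \textbf{b}'' the oracle guarantee $u_{k+1}\leq(1+\alpha)f(z_{k+1})+\delta\leq(1+\alpha)(t_k+\epsilon')+\delta$ together with $l_{k+1}=l_k$ yields $g_{k+1}\leq\tfrac{1+\alpha}{\alpha+2}g_k+\alpha l_k+(1+\alpha)\epsilon'+\delta$. Bounding $l_k\leq\hat f^*$ and $\epsilon'\leq\alpha\hat f^*+(1+\alpha)\epsilon'+\delta$, both branches collapse to
\[
g_{k+1}\leq q\,g_k+c,\qquad q:=\tfrac{1+\alpha}{2+\alpha}<1,\qquad c:=\alpha\hat f^*+(1+\alpha)\epsilon'+\delta .
\]
I expect this step to be the crux: the multiplicative factor $(1+\alpha)$ in the ``\textbf{a}'' branch amplifies the lower bound and is exactly what produces the irreducible additive constant $\alpha\hat f^*$ inside $c$; recognizing that both branches contract by the \emph{same} factor $q$ with the \emph{same} constant $c$ is what makes the analysis go through.

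For part~\eqref{steps} I would unroll the affine recursion, $g_k\leq q^k g_0+c\sum_{j=0}^{k-1}q^{j}\leq q^k g_0+\tfrac{c}{1-q}=q^k g_0+(2+\alpha)c$, since $1/(1-q)=\alpha+2$. With $g_0=f(\hat z_0)-f(z_0)$ this gives $f(\hat z)-\hat f^*\leq u_k-\hat f^*\leq q^k\bigl(f(\hat z_0)-f(z_0)\bigr)+(2+\alpha)\bigl(\alpha\hat f^*+(1+\alpha)\epsilon'+\delta\bigr)$. To absorb the first term into $\epsilon\hat f^*$ I would use $\hat f^*\geq f(z_0)>0$, so it suffices that $q^k\leq \epsilon f(z_0)/\bigl(f(\hat z_0)-f(z_0)\bigr)$; taking logarithms and dividing by $\ln q=-\ln\tfrac{2+\alpha}{1+\alpha}<0$ produces the threshold $k\geq \ln\bigl(\tfrac{f(\hat z_0)-f(z_0)}{f(z_0)\epsilon}\bigr)\big/\ln\bigl(\tfrac{2+\alpha}{1+\alpha}\bigr)$. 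The ceiling gives the integer count, and the maximum with $0$ covers the case $f(\hat z_0)-f(z_0)\leq f(z_0)\epsilon$, where the required accuracy already holds at the outset.

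Finally, for part~\eqref{accuracy} I would identify $(2+\alpha)c$ as the fixed point $g^\ast=c/(1-q)$ of the map $g\mapsto qg+c$, and argue tightness by exhibiting a worst case in which every inequality above is met with equality: take $f(z_0)=\hat f^*$ so that $l_k\equiv\hat f^*$ stays frozen, let the oracle always report ``\textbf{a} but not \textbf{b}'' returning the extremal point $f(\hat z_{k+1})=(1+\alpha)f(z_{k+1})+\delta$ with $f(z_{k+1})=t_k+\epsilon'$, and start with $g_0=g^\ast$. Then $g_k\equiv g^\ast$ and the best available value remains $u_k=\hat f^*+(2+\alpha)c$ for all $k$, so no number of iterations can guarantee accuracy better than \eqref{eq:best possible accuracy for termination}. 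The only delicate point here is confirming that such an instance is realizable under the standing assumptions, which it is because $f(z_0)\le\hat f^*$ is allowed to hold with equality and the oracle is only required to respect its $(\alpha,\delta)$ guarantee.
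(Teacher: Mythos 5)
Your proposal is correct and follows essentially the same route as the paper: the paper also tracks the gap $\Delta_k:=u_k-l_k$, derives the identical contraction $\Delta_{k+1}\le\frac{1+\alpha}{2+\alpha}\Delta_k+\alpha\hat f^*+(1+\alpha)\epsilon'+\delta$ in both oracle branches, unrolls it geometrically, and uses $\hat f^*\ge f(z_0)$ to obtain the iteration count. Your fixed-point framing of part~(a) is just a cleaner packaging of the paper's remark that all inequalities in the stalling criterion can be tight, so there is nothing substantive to add.
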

\begin{proof}
\smartqed
{\it Part (a).} At every iteration $k$, there is by
construction no $\hat z\in S\cap(\Z^n\times\R^d)$ for which
$l_k>f(\hat z)$.
Also, $f(\hat z_k) \ge  u_k \geq\hat f^*$.
For convenience, we denote $(1+\alpha)/(2+\alpha)$ by
$\lambda$ in this proof, and we set $\Delta_k:=u_k-l_k$ for every $k\geq 0$.

Suppose first that the oracle finds a new point $\hat
z_{k+1}\in S\cap(\Z^n\times\R^d)$ at iteration $k$. Then:
\[
f(\hat z_{k+1})\leq(1+\alpha)f(z_{k+1})+\delta\leq(1+\alpha)
\left(\lambda l_k+(1-\lambda)u_k+\epsilon'\right)+ \delta,
\]
where the first inequality is due to the definition of our
oracle and the second one comes from the accuracy by which
our bisection procedure computes $z_{k+1}$.
Observe that the oracle might return a point
$\hat z_k$ such that $f(\hat z_k)$ is smaller than the
above right-hand side. In this case, no progress is done.
As $u_k \le f(\hat z_k)$, this implies:
\begin{equation}\label{eq:termination bound}
(\lambda+\lambda\alpha)l_k+(1+\alpha)\epsilon' + \delta\geq
(\lambda+\lambda\alpha -\alpha)f(\hat z_k).
\end{equation}
Using that $\hat f^*\geq l_k$ we get an upper bound of the
left-hand side.
Rearranging the terms and replacing $\lambda$ by its value, we get:
\[
\hat f^* + (2+\alpha)(\alpha \hat f^*+(1+\alpha)\epsilon' +
\delta)\geq f(\hat z_k).
\]
Since all the inequalities in the above derivation can be
tight, a better accuracy cannot be guaranteed with our
strategy.
Thus, we can output $\hat z_k$. \\
{\it Part (b).} Note that we can assume $\frac{f(\hat
z_0)-f(z_0)}{f(z_0)\epsilon}>1$, for otherwise the point $\hat z_0$ already satisfies our stopping criterion.

In order to assess the progress of the algorithm, we can
assume that the stopping criterion
\eqref{eq:termination bound} is not satisfied.
As $l_{k+1} = l_k$ in
our case where the oracle gives an output, we get:
\begin{eqnarray*}
\Delta_{k+1} &=& u_{k+1}-l_k  \le f(\hat z_{k+1})-l_k \\
&\leq& (1+\alpha)\left(\lambda l_k+(1-\lambda)u_k+\epsilon'\right)+ \delta-l_k\\
&=& \frac{\alpha^2+\alpha-1}{2+\alpha}l_k + \frac{1+\alpha}{2+\alpha}u_k + (1+\alpha)\epsilon'+
\delta\\
&=&\frac{1+\alpha}{2+\alpha}(u_k-l_k) + \alpha l_k +(1+\alpha)\epsilon'+
\delta\\
&\leq&\frac{1+\alpha}{2+\alpha}\Delta_k + \alpha \hat f^* +(1+\alpha)\epsilon'+
\delta.
\end{eqnarray*}

Suppose now that the oracle informs us that there is no mixed-integral
point with a value smaller than
$ f(z_{k+1})\geq\lambda l_k +(1-\lambda) u_k - \epsilon'$.
Then $\hat z_{k+1} = \hat z_k$ and $u_{k+1} = u_k$.
We have:
\begin{eqnarray*}
\Delta_{k+1} &=& u_{k+1}-l_{k+1} = f(\hat
z_k)-f(z_{k+1})\\
&\leq&
u_k-\left(\lambda l_k +(1-\lambda) u_k - \epsilon'\right)= \lambda\Delta_k + \epsilon'\\
&\leq& \frac{1+\alpha}{2+\alpha}\Delta_k + \alpha\hat f^* +(1+\alpha)\epsilon'+
\delta.
\end{eqnarray*}
The above inequality is valid for every $k$ that does not comply with the stopping criterion, whatever the oracle detects. 
Therefore, we get:
\[
\Delta_N\leq \left(\frac{1+\alpha}{2+\alpha}\right)^N\Delta_0 + (2+\alpha)
\left(\alpha\hat f^* + (1+\alpha)\epsilon'+ \delta \right),
\]
and the proposition is proved because $f(\hat z_N)-\hat f^*\leq
\Delta_N$. \qed \end{proof}

In the remainder of this paper, we elaborate on possible
realizations of our hard oracle.

We proceed as follows.
In Section \ref{sec.2D}, we focus on the special case when
$n=2$ and $d=0$.
We present a geometric construction that enables us to
implement the improvement oracle in polynomial time.
With the help of this oracle we then solve the problem
\eqref{eq:general_MIP_f} with $n=2$ and $d=0$  and obtain a
``best point'', i.e., an optimal point.
An adaptation of this construction can also be used to
determine a second and, more generally, a ``$k$-th best point''.
These results will be extended in Section~\ref{sec.2D+d} to
the mixed-integer case with two integer variables and $d$
continuous variables.
The latter extensions are then used as a subroutine to solve
the general problem \eqref{eq:general_MIP_f} with arbitrary $n$ and $d$ in finite time.

\section{Two-dimensional integer convex optimization} \label{sec.2D}

If $n=1$ and $d=0$, an improvement oracle can be trivially
realized for $\alpha = \delta = 0$.
Queried on a point $z\in\R$ the oracle returns $\hat z:=\arg\min\{f(\lfloor
z\rfloor),f(\lceil z\rceil)\}$ if one of these numbers is smaller or equal to $f(z)$, or returns \textbf{b} otherwise.
\comment{
Of course, minimizing a convex function over $\Z$ is no harder than
minimizing the same function over $\R$ (note that in general this
problem can be difficult because it is unbounded). Later in the paper, we will also use the obvious fact that the minimum of $f$ over $\Z\setminus\{\hat x^*\}$ is easy to determine: that point is one of the two neighbors of $\hat x^*$.

If $n=1$, the oracle can be easily realized if we have at our disposal a fast way of computing values of $g(\hat x):=\min\{f(x,y): (x,y)\in S \cap(\{\hat x\}\times\R^d)\}$.}
The first non-trivial case arises when $n=2$ and $d=0$.
This is the topic of this section.

\subsection{Minimizing a convex function in two integer variables}

We show in this section how to implement efficiently the
oracle $\mathrm{O}_{\alpha,\delta}$ with $\alpha=\delta=0$,
provided that the feasible set is contained in a known
finite box $[-B,B]^2$.

\begin{theorem}\label{2dim::theorem}
Let $f : \R^2 \to \R$ and $g_i : \R^2 \to \R$ with $i=1,\dots,m$ be convex functions.
Let $B\in\N$ and let $x\in[-B,B]^2$ such that $g_i(x)\le 0$ for all $i=1,\dots,m$. Then, in a number of evaluations of $f$ and $g_1,\dots,g_m$ that is polynomial in $\ln(B)$, one can either
\begin{enumerate}[(a)]
  \item find an $\hat x\in [-B,B]^2\cap\mathbb{Z}^2$ with $f(\hat x)\leq f(x)$ and $g_i(\hat x)\leq 0$ for all $i=1,\dots,m$ or
  \item show that there is no such point.
\end{enumerate}
\end{theorem}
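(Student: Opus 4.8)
The plan is to recognize the statement as a two-dimensional integer-feasibility problem for a convex body presented by a first-order oracle, and to solve it by a Lenstra-type flatness argument specialized to the plane. Set
\[
K := \{z \in [-B,B]^2 : g_i(z) \le 0 \text{ for all } i,\ f(z) \le f(x)\}.
\]
This set is convex, contains the given point $x$, and lies in the box $[-B,B]^2$; an integer point $\hat x \in K$ is exactly a point as required in (a), while $K \cap \Z^2 = \emptyset$ is exactly conclusion (b). The only access we have to $K$ is through evaluations of $f$ and the $g_i$ (and their subgradients); from these I would build, for any direction $c \in \R^2$, an \emph{approximate linear-optimization oracle} returning $\min_{z \in K}\langle c,z\rangle$ and $\max_{z\in K}\langle c,z\rangle$ up to a chosen accuracy $\epsilon'$. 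Each call is one approximate convex minimization over the continuous feasible region, so its cost matches the currency in which the theorem measures complexity. In particular this yields a \emph{width oracle} $w_v(K) := \max_{z\in K}\langle v,z\rangle - \min_{z\in K}\langle v,z\rangle$ for every integer direction $v$.

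The algorithm rests on the planar flatness theorem: there is an absolute constant $c_2$ such that any convex body whose lattice width $w(K) := \min_{v \in \Z^2\setminus\{0\}} w_v(K)$ exceeds $c_2$ necessarily contains a lattice point. First I would compute a short vector for the lattice width. Since $v \mapsto w_v(K)$ is (the restriction to $\Z^2$ of) a norm, this is a planar shortest-vector problem, which I solve by Gauss--Lagrange lattice reduction driven by the width oracle: a constant number of candidate integer vectors are compared by their widths, and reduction terminates in $O(\ln B)$ steps because all relevant quantities are polynomial in $B$. Let $v$ be the returned primitive direction and $W = w_v(K)$.

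Two cases then arise. If $W \le c_2$, the body is \emph{thin}: the integer lines $\ell_k = \{z : \langle v,z\rangle = k\}$ meeting $K$ correspond to at most $\lfloor W\rfloor + 1 = O(1)$ consecutive integers $k$, whose range is read off from the extrema of $\langle v,\cdot\rangle$ over $K$. On each such line the problem collapses to one dimension: computing the two endpoints of the segment $K \cap \ell_k$ (two further linear optimizations) reduces to checking whether a lattice point of the line lies between them, an $O(1)$ arithmetic test since $v$ is primitive. If $W > c_2$, the body is \emph{fat}: flatness guarantees $K \cap \Z^2 \ne \emptyset$, and I would produce a witness by the usual rounding step -- compute a large inscribed ellipse (equivalently, an inscribed ball in a transformed metric) via the optimization oracle and round its center in the reduced basis, the width lower bound ensuring that the rounded point lands in $K$. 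Either the thin recursion exhausts all candidate lines and reports (b), or a candidate lattice point is found; in all cases the total number of oracle evaluations is polynomial in $\ln B$.

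The main obstacle -- and precisely the point the introduction singles out by dropping the hypothesis of a known inscribed ball $B(p,r)\subseteq K$ -- is robustness against degeneracy and oracle inexactness. The set $K$ may be lower-dimensional (a segment or a single point, e.g.\ when $x$ is already the continuous minimizer), so no inscribed ball exists and the fat case never triggers; the thin branch must therefore be self-contained and correct even for flat $K$. Moreover the width oracle is only approximate, whereas the lattice-width \emph{direction} $v$ must be an exact integer vector and the final answer must be exactly feasible. I would resolve this by (i) choosing $\epsilon'$ small enough -- polynomially many bits suffice, because competing reduced vectors have bounded integer entries and their widths are separated by $\Omega(1)$ whenever they differ -- so that Gauss reduction compares widths correctly, and (ii) never trusting approximate endpoint values for the final decision: every candidate lattice point flagged by the search is verified by \emph{direct} evaluation of $f$ and the $g_i$ at that integer point, which is exact. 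A lattice point within $\epsilon'$ of a computed segment endpoint is simply tested outright; if it fails, the true endpoint is strictly interior and the slice genuinely contains no lattice point. Carrying this bookkeeping consistently through both branches is the technical heart of the argument.
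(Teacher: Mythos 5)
Your plan is a Lenstra-type algorithm (lattice width, flatness, thin/fat dichotomy), which is a genuinely different route from the paper's --- but it runs into exactly the obstruction that the paper's introduction singles out as the reason such an approach fails here. Every load-bearing step of your algorithm (the width oracle $w_v(K)$, the extrema of $\langle v,\cdot\rangle$ that tell you \emph{which} lattice lines to enumerate, the inscribed ellipse in the fat case) requires linear optimization over the level set $K=\{z\in[-B,B]^2: g_i(z)\le 0,\ f(z)\le f(x)\}$. With only evaluations and subgradients of $f$ and the $g_i$, and no known ball $B(p,r)\subseteq K$, this can be realized only in the weak sense: an ellipsoid-type method certifies $\max_{z\in K}\langle c,z\rangle\ge t$ only relative to the inner parallel body $K_{-\epsilon}$, which is empty whenever $K$ is lower-dimensional or thinner than $\epsilon$. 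Concretely, if $K$ is a sliver of width $2^{-B}$ and length $\Theta(B)$ (containing $\Theta(B)$ integer points), obtaining a nonvacuous width estimate requires accuracy $\epsilon<2^{-B}$, i.e.\ $\Omega(B)$ rather than $O(\ln B)$ oracle calls; if you settle for coarser accuracy, the width oracle may return $\approx 0$ in every direction, Gauss reduction outputs an essentially arbitrary $v$, and the ``thin'' branch enumerates $O(1)$ lines $\langle v,z\rangle=k$ while $K$ actually crosses $\Theta(B)$ of them --- missing integer points entirely. Your two safeguards do not close this: shrinking $\epsilon'$ does not help because the failure is structural, not a rounding issue (and the claim that competing integer directions have widths separated by $\Omega(1)$ is false in general); and verifying flagged candidates by direct evaluation makes conclusion (a) sound but cannot convert ``no candidate survived'' into the certificate demanded by conclusion (b). The fat case has the same disease: lattice width above the flatness constant only forces an inscribed ball of radius $\Omega(1/B)$, so rounding an approximate center by $\sqrt2/2$ need not land in $K$, and the L\"owner--John/shallow-cut machinery you would invoke instead again presupposes an a priori inner-radius bound.

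The paper's proof is built precisely to avoid ever optimizing a linear functional over $K$. It anchors everything at the known feasible point $x$: the box is split into the four triangles $\conv\{x,F_j\}$, and each is shrunk by a factor $2/3$ per iteration. The region discarded at each step is controlled by Lemma~\ref{2dim::shrinking}, a lattice-covering argument showing its integer points lie on at most three explicitly computable lines; so one iteration costs a single integer \emph{linear} program over an explicitly described rational triangle plus $O(1)$ one-dimensional integer convex minimizations along explicit lines --- all exact, because feasibility is only ever tested by evaluating $f$ and the $g_i$ at integer points. After $O(\ln B)$ rounds the surviving triangle has volume below $1/2$ and Lemma~\ref{2dim::lowdimension} reduces it to one last line. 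To rescue your flatness-based argument you would have to supply a realizable substitute for the width oracle on implicitly given, possibly degenerate level sets; that is the missing idea, and it is not a technicality.
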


Note that we do not allow for the function $f$ to take
infinite values, in order to ensure that we can minimize
$f$ over the integers of any segment of
$[-B,B]^2$ in $\OO(\ln(B))$ evaluations of $f$
using a bisection method.
Indeed, if a convex function takes infinite values, it can
cost up to $\OO(B)$ evaluations of $f$ to
minimize it on a segment containing 
$\OO(B)$ integer points, as there could be only
one of those points on its domain.

The algorithm that achieves the performance
  claimed in Theorem~\ref{2dim::theorem} is described in
the proof of the theorem.
That proof requires two lemmata. 
We use the following notation.
Let $Q\subset\R^2$.
We denote by $\vol(Q)$ the volume of $Q$, i.e., its Lebesgue
measure.
By $\aff\{Q\}$ we denote the smallest affine space
containing $Q$ and by $\conv\{Q\}$ the convex hull of $Q$.
The dimension $\dim(Q)$ of $Q$ is the dimension of
$\aff\{Q\}$.
The scalar product we use in this section is exclusively the
standard dot product.


\begin{lemma}\label{2dim::lowdimension}
Let $K \subset \R^2$ be a polytope with $\vol(K) < \frac{1}{2}$.
Then $\dim(\conv(K \cap \Z^2))\le1$.
\end{lemma}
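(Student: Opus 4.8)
The plan is to argue by contraposition: I will show that whenever $K \cap \Z^2$ affinely spans the whole plane, the area of $K$ must already be at least $\frac{1}{2}$. Since $\dim(\conv(K\cap\Z^2))$ can only take the values $0$, $1$, or $2$, it suffices to rule out the value $2$ under the hypothesis $\vol(K) < \frac{1}{2}$.

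So suppose, for contradiction, that $\dim(\conv(K \cap \Z^2)) = 2$. By definition of the affine hull having dimension $2$, the set $K \cap \Z^2$ then contains three affinely independent (equivalently, non-collinear) lattice points $a, b, c \in \Z^2$. Because $K$ is convex and contains $a, b, c$, it contains their convex hull, the triangle $T := \conv\{a,b,c\}$; hence $T \subseteq K$ and in particular $\vol(K) \ge \vol(T)$.

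The core step is the classical lower bound on the area of a lattice triangle. Writing the two edge vectors $u := b - a$ and $v := c - a$, both of which lie in $\Z^2$, the area of $T$ equals $\frac{1}{2}\,|\det(u, v)|$, where $(u, v)$ denotes the matrix with columns $u$ and $v$. Because $a, b, c$ are non-collinear, $u$ and $v$ are linearly independent, so $\det(u, v) \ne 0$; and because $u, v$ have integer coordinates, $\det(u, v)$ is an integer. Hence $|\det(u, v)| \ge 1$ and $\vol(T) \ge \frac{1}{2}$. Combining with $\vol(K) \ge \vol(T)$ yields $\vol(K) \ge \frac{1}{2}$, contradicting the hypothesis. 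Therefore $\dim(\conv(K \cap \Z^2)) \le 1$, as claimed.

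I expect no serious obstacle here: the only ingredient beyond the convexity of $K$ is the fact that a lattice triangle has area at least $\frac{1}{2}$, which drops out immediately from the integrality of the determinant of its edge vectors (equivalently, it follows from Pick's theorem). The single point that deserves a clean statement is that $\dim(\conv(K\cap\Z^2)) = 2$ forces the existence of three non-collinear lattice points inside $K$, which is exactly what it means for the affine hull of $K \cap \Z^2$ to be two-dimensional.
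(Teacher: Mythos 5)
Your proof is correct and follows essentially the same route as the paper's: assume three affinely independent lattice points exist, note that the triangle they span lies in $K$, and bound its area from below by $\frac{1}{2}$ via the integrality of the determinant of the edge vectors. You merely spell out a few steps (nonvanishing and integrality of the determinant) that the paper leaves implicit.
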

\begin{proof}
\smartqed
For the purpose of deriving a contradiction, assume that there exist
three affinely independent points $\hat x,\hat y,\hat z\in K\cap\Z^2$.
Then $\vol(K)\ge\vol(\conv(\{\hat x,\hat y,\hat z\})) = \frac{1}{2}|\det(\hat x-\hat z,\hat y-\hat z)|\ge\frac{1}{2}$.
\qed\end{proof}

\begin{lemma}\label{2dim::shrinking}
Let $u,v,w\in\R^2$ be affinely independent. If
\[\big(\conv\{u,u+v,u+v+w\}\setminus\left(\conv\{u+v,u+v+w\}\cup\{u\}\right)\big)\cap\Z^2=\emptyset,\]
then the lattice points $\conv\{u,u+v,u+v-w\}\cap\Z^2$ lie on at
most three lines.
\end{lemma}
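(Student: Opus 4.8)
The plan is to work with the triangle $T_1:=\conv\{u,u+v,u+v+w\}$ and its reflected twin $T_2:=\conv\{u,u+v,u+v-w\}$, exploiting that $u+v$ is the midpoint of the segment $[u+v-w,\,u+v+w]$, which lies on the line $\ell:=\aff\{u+v,u+v+w\}$. Thus $T_1$ and $T_2$ are the two halves into which the median $[u,u+v]$ cuts the triangle $\conv\{u,\,u+v+w,\,u+v-w\}$; in particular they have equal area and, on every line parallel to $\ell$, their cross-sections are two abutting segments of equal length (the median meets each such cross-section at its midpoint). The hypothesis says precisely that $T_1\cap\Z^2$ is contained in the base edge $[u+v,u+v+w]\subset\ell$ together with the apex $u$; equivalently, $T_1$ has no lattice point strictly above the base line $\ell$ other than $u$.

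First I would set up the main (non-degenerate) case, in which $\ell$ is parallel to some nonzero lattice vector; let $p$ be the primitive one and foliate $\R^2$ by the lattice lines parallel to $\ell$. The key width estimate is that any such line other than $\ell$ meets $T_1$ in a segment of length $<\|p\|$: a segment of length $\ge\|p\|$ on a lattice line must contain a lattice point, and that point would lie in $T_1$ strictly above the base, contradicting the hypothesis. Since the $T_2$-cross-section on the same line is congruent to the $T_1$-one, it too has length $<\|p\|$, so $T_2$ meets each lattice line parallel to and distinct from $\ell$ in at most one lattice point. (If $\ell$ has no rational direction, every parallel line meets $\Z^2$ in at most one point and the base edge of $T_2$ carries at most one lattice point; this degenerate case is treated separately.)

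The heart of the argument is to show that these ``upper'' lattice points of $T_2$ are essentially collinear. Normalizing by a unimodular map so that $\ell$ is the $x$-axis and $p=(1,0)$, write the median's $x$-coordinate at integer height $k$ as $x_{\mathrm{med}}(k)=\alpha k+\beta$ and let $\ell(k)$ be the common half-width of the two cross-sections, which decreases to $0$ as $k$ rises to the apex height. Writing $\{\cdot\}$ for the fractional part, the emptiness of $T_1$ gives $0<\{\alpha k+\beta\}<1-\ell(k)$ at every admissible height, while $T_2$ carries a lattice point at height $k$ only when $\{\alpha k+\beta\}\le\ell(k)$; the two inequalities force $\{\alpha k+\beta\}<\tfrac12$ at every height carrying a $T_2$-point. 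From here a short computation settles three consecutive such heights: the integral twice-area of the triangle they span equals $-\{\alpha k+\beta\}+2\{\alpha(k{+}1)+\beta\}-\{\alpha(k{+}2)+\beta\}$, which lies strictly in $(-1,1)$ and hence vanishes, so consecutive upper points are collinear. Propagating this across the heights where $T_2$ has no lattice point then confines the upper points, together with the apex $u$ if it is integral, to at most two lines; adding the base line $\ell$, which carries the lattice points of the edge $[u+v,u+v-w]$, gives the bound of three.

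The step I expect to be the main obstacle is exactly this propagation. At a ``gap'' height where $T_2$ has no lattice point, the floor function defining the upper points can in principle jump to a neighbouring parallel line, and one must show that the simultaneous emptiness constraint $\{\alpha k+\beta\}<1-\ell(k)$ at every intermediate height, combined with the monotone narrowing of the slices, forbids more than one such jump, so that the upper points occupy at most two lines in total. Making this rigorous, and folding in the irrational-direction case, is where the count of three (rather than two) lines is genuinely decided; the geometric set-up, the congruent-slices observation and the width estimate are comparatively routine.
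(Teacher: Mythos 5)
Your set-up (the two congruent families of cross-sections cut by the median $[u,u+v]$, the width estimate forcing at most one lattice point of $T_2:=\conv\{u,u+v,u+v-w\}$ on each lattice line parallel to the base, and the second-difference computation showing that three \emph{consecutive} occupied heights yield collinear points) is sound. But the proof is not complete: the step you yourself flag as ``the main obstacle'' --- propagating collinearity across heights at which $T_2$ contains no lattice point, so as to confine the points above the base to at most two lines --- is exactly where the content of the lemma lies, and it is not supplied. Without it, nothing prevents the points $(k,\lfloor\alpha k+\beta\rfloor)$ over a sparse set of occupied heights from forming a convex chain with many vertices (this is what happens for the integer hull under a generic line, governed by the continued-fraction expansion of $\alpha$); you must show that the simultaneous constraints $\{\alpha k+\beta\}<1-\ell(k)$ at \emph{all} heights rule this out, and that argument is missing. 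I would also caution that the target invariant ``at most two lines strictly above the base, plus the base line'' is not obviously the right one: nothing in your argument so far excludes a configuration needing three lines above the base while the base edge carries at most one lattice point, which would still be consistent with the lemma but would break your bookkeeping.

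For comparison, the paper proves the lemma by a quite different route: it covers $T_2$ by three translates $A_1,A_2,A_3$ of the half-scale parallelogram $P:=\conv\{0,\tfrac12 v,\tfrac12 w,\tfrac12(v+w)\}$ (one at the apex, two along the base), observes that the hypothesis makes a suitable translate of $P$ lattice-point free, and then shows that if some $A_i$ contained three affinely independent lattice points, the fundamental parallelogram they span could be rearranged modulo $\Z^2$ into $A_i$, so $P$ would tile the plane under $\Z^2$-translations --- contradicting the empty translate. Each $A_i$ therefore has collinear lattice points, giving the bound of three lines with no case analysis on heights or gaps. If you want to salvage your slicing approach, the honest comparison is that the paper's covering argument replaces precisely the propagation step you could not carry out; as submitted, your proposal has a genuine gap at that point.
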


\begin{proof}
\smartqed
We partition $\conv\{u,u+v,u+v-w\}$ into three regions.
Then we show that in each region the integer points must lie on
a single line using a lattice covering argument.

We define the parallelogram
$P:=\conv\{0,\frac{1}{2}v,\frac{1}{2}w,\frac{1}{2}v+\frac{1}{2}w\}$.
Further, we set \[A_1:=u-\frac{1}{2}w+P,\quad A_2:=u+\frac{1}{2}v-w+P, \quad\textrm{and}\quad
A_3:=u+\frac{1}{2}v-\frac{1}{2}w+P.\] Note that
$\conv\{u,u+v,u+v-w\}\subset A_1\cup A_2\cup A_3$ (see Fig.~\ref{fig:figure1}). Our assumption implies that the set $u+\frac{1}{2}v+P$ does not contain any integer point except possibly on the segment $u+v+\conv\{0,w\}$. Therefore, for a sufficiently small $\varepsilon>0$, the set $(u+\frac{1}{2}v-\varepsilon(v+w)+P)\cap \Z^2$ is empty. 

Assume now that one of the three regions, say $A_1$, contains three affinely
independent integer points $\hat x,\hat y,\hat z$. We show below that $A_1+\Z^2=\R^2$,
i.e., that $P$ defines a lattice covering, or equivalently that the set $t+P$ contains at least one integer point for every $t\in\R^2$.
This fact will contradict that
$(u+\frac{1}{2}v-\varepsilon(v+w)+P)\cap \Z^2=\emptyset$ and thereby prove the lemma.

\begin{figure}[ht]
\centering
\begin{tikzpicture}[scale=0.8]
\tikzstyle{every node}=[circle, fill=black, draw,inner sep=0pt,
minimum width=2pt]
 \node (a)  [label=below:$u$,draw]{} 
node (b) at (-0.5,4) [label=left:$u+v-w\,$]{} 
node (c) at (2.5,4) [label=above:$u+v$]{} 
node (d) at (5.5,4) [label=right:$\,u+v+w$]{} ;
\draw {(a) -- (b) -- (c) -- (d) -- (a) -- (c)}; 
\draw [dashed]{(a) -- (-1.5,0) (-1.5,0) -- (1,4) (1.25,2) -- (-1.75,2) (-1.75,2) -- (b)}; 
\draw (-1,0.5 ) node[draw=none,fill=none,label=right:$A_1$]{}; 
\draw (0.25,2.5 ) node[draw=none,fill=none,label=right:$A_3$]{}; 
\draw (-1.25,2.5 ) node[draw=none,fill=none,label=right:$A_2$]{};
\end{tikzpicture}
\caption{Partitioning the triangle in regions.} \label{fig:figure1}
\end{figure}
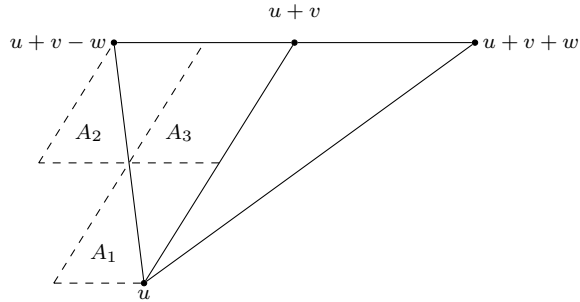
\begin{figure}[ht]
\begin{minipage}[b]{0.50\linewidth}
\centering
\begin{tikzpicture}[scale=0.8]
\tikzstyle{every node}=[circle, fill=black, draw,inner sep=0pt,
minimum width=2pt] \fill[color=gray!75] (4.75,3.5) -- (6.0,2.75) --
(4.28125,2.75) -- cycle; \fill[color=gray!50] (6.0,2.75) --
(4.28125,2.75) -- (3.126865672,0.902985075) -- cycle; \fill[color=gray!25]
(2.5,0.5) -- (1.25,1.25) -- (4.75,3.5) -- (3.126865672,0.902985075) -- cycle;
\node (x) at (2.5,0.5) [label=below:$\hat x$,draw]{} node (z) at
(4.75,3.5) [label=above:$\hat z$]{} node (y) at (1.25,1.25)
[label=left:$\hat y$]{} node (d) at (6.0,2.75) [label=right:$\,\hat x-\hat y+\hat z$]{}
node (j) at (3.126865672,0.902985075) {};
\draw {(x) -- (y) -- (z) -- (d) -- (x)}; 
\draw {(z) -- (j)};
\draw {(d) -- (4.28125,2.75)}; 
\draw [dashed]{(-0.3,-0.16) -- (2.9,-0.16)  -- (5.6,4.16) -- (2.3,4.16) -- (-0.3,-0.16)}; 
\draw
(3.2,0.3 ) node[draw=none,fill=none,label=right:$A_1$]{}; \draw
(3.5,4.66 ) node[draw=none,fill=none,label=right:$w^\perp$]{}; \draw
(1.0,3) node[draw=none,fill=none,label=below:$v^\perp$]{};
\draw[arrows=->,line width=.4pt](3.5,4.16)--(3.5,5.16);
\draw[arrows=->,line width=.4pt](1.05,2)--(0.202001695994912,2.52999894000318);
\end{tikzpicture}
\end{minipage}
\hspace{0.5cm}
\begin{minipage}[b]{0.50\linewidth}
\centering
\begin{tikzpicture}[scale=0.8]
\tikzstyle{every node}=[circle, fill=black, draw,inner sep=0pt,
minimum width=2pt] \fill[color=gray!50] (4.75,3.5) -- (3.03125,3.5) --
(1.876865672,1.652985075) -- cycle; \fill[color=gray!75] (2.5,0.5)
-- (1.25,1.25) -- (0.78125,0.5) -- cycle; \fill[color=gray!25]
(2.5,0.5) -- (1.25,1.25) -- (4.75,3.5) -- (3.126865672,0.902985075) -- cycle;
\node (x) at (2.5,0.5) [label=below:$\hat x$,draw]{} node (z) at
(4.75,3.5) [label=above:$\hat z$]{} node (y) at (1.25,1.25)
[label=left:$\hat y$]{} node (j) at (3.126865672,0.902985075) {};
\draw {(x) -- (y) -- (z) -- (j) -- (x)}; 
\draw {(x) -- (0.78125,0.5) -- (y)};
\draw {(z) -- (3.03125,3.5) -- (1.876865672,1.652985075)};
\draw [dashed]{(-0.3,-0.16) -- (2.9,-0.16)  -- (5.6,4.16) -- (2.3,4.16) -- (-0.3,-0.16)}; 
\draw
(3.2,0.3 ) node[draw=none,fill=none,label=right:$A_1$]{}; \draw
(3.5,4.66 ) node[draw=none,fill=none,label=right:$w^\perp$]{}; \draw
(1.0,3) node[draw=none,fill=none,label=below:$v^\perp$]{};
\draw[arrows=->,line width=.4pt](3.5,4.16)--(3.5,5.16);
\draw[arrows=->,line width=.4pt](1.05,2)--(0.202001695994912,2.52999894000318);
\end{tikzpicture}
\end{minipage}
\caption{Mapping $T$.}\label{fig:figure2}
\end{figure}
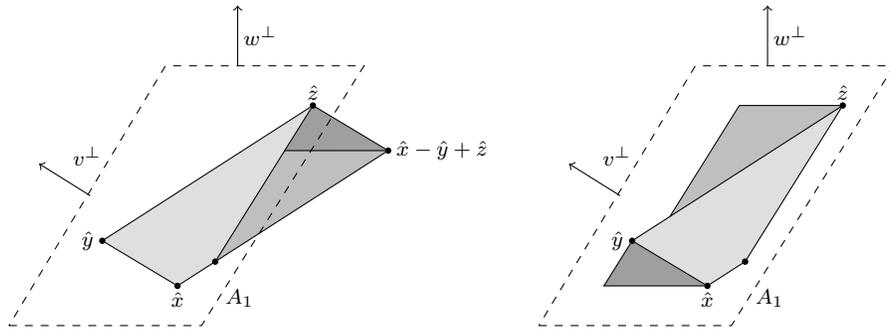

Clearly, the parallelogram $Q:=\conv\{ \hat x,\hat y,\hat z,\hat x-\hat y+\hat z \}$ defines a
lattice covering, as it is full-dimensional and its vertices are integral. 
We transform $Q$ into a set $Q'\subseteq A_1$ for
which $a\in Q'$ iff there exists $b\in Q$ such that $a-b\in\Z^2$.
Specifically, we define a mapping $T$ such that $Q'=T(Q)\subset A_1$ and
$T(Q)+\Z^2=\R^2$. Let $v^\perp:=(-v_2,v_1)^\top$ and
$w^\perp:=(-w_2,w_1)^\top$, i.e., vectors orthogonal to $v$ and $w$.
Without loss of generality (up to a permutation of the names $\hat x,\hat y,\hat z$), 
we can assume that $\langle\hat x, w^\perp\rangle\le \langle\hat y, w^\perp\rangle \le \langle\hat z,
w^\perp\rangle$. If $\hat x-\hat y+\hat z\in A_1$ there is nothing to show, so we suppose that
$\hat x-\hat y+\hat z\notin A_1$. 

Note that $\langle\hat x, w^\perp\rangle\le \langle\hat x-\hat y+\hat z, w^\perp
\rangle\le \langle\hat z, w^\perp\rangle$. Assume first
that $\langle\hat x-\hat y+\hat z, v^\perp\rangle<\langle\hat z,v^\perp\rangle\le \langle\hat x,v^\perp\rangle,\langle\hat y, v^\perp\rangle$ --- the strict inequality resulting from the fact that $\hat x-\hat y+\hat z\notin A_1$. We define the mapping $T:Q\to A_1$ as
follows,
\begin{equation*}
T(l)=
\begin{cases}
  l+\hat y-\hat z,    & \text{if } \langle l,v^\perp\rangle < \langle \hat z,v^\perp\rangle \text{ and } \langle l,w^\perp\rangle > \langle \hat x-\hat y+\hat z,w^\perp\rangle, \\
  l-\hat x+\hat y,    & \text{if } \langle l,v^\perp\rangle < \langle \hat z,v^\perp\rangle \text{ and } \langle l,w^\perp\rangle \le \langle \hat x-\hat y+\hat z,w^\perp\rangle, \\
  l,        & \text{otherwise}
\end{cases}
\end{equation*}
(see Fig.~\ref{fig:figure2}). It is straightforward to show that $T(Q)\subset A_1$ and $T(Q)+\Z^2=\R^2$.
A similar construction can easily be defined for any
possible ordering of $\langle \hat x-\hat y+\hat
z,v^\perp\rangle$, $\langle \hat z,v^\perp\rangle$,
$\langle\hat x,v^\perp\rangle$, and $\langle \hat
y,v^\perp\rangle$.\qed \end{proof}

\begin{remark}
In each region $A_i$, the line containing $A_i\cap\Z^2$, if
it exists, can be computed by the minimization of an
arbitrary linear function $x\mapsto \langle c,x\rangle$ over
$A_i\cap\Z^2$, with $c\neq 0$, and the maximization  of the
same function with the fast algorithm described in
\cite{Heisenbrand2d}.
If these problems are feasible and yield two distinct
solutions, the line we are looking for is the one joining
these two solutions.
If the two solutions coincide, that line is the one
orthogonal to $c$ passing through that point.

The algorithm in \cite{Heisenbrand2d} is applicable to
integer linear programs with two variables and $m$ constraints. The data of the problem should be integral. This algorithm runs in $\OO(m+\phi)$, where $\phi$ is the binary encoding length of the data.$\hfill\diamond$
\end{remark}

\begin{proof}[of Theorem \ref{2dim::theorem}]
\smartqed
As described at the beginning of this subsection, a one-dimensional integer minimization problem
can be solved polynomially with respect to the logarithm of the
length of the segment that the function is optimized over.
In the following we explain how to reduce the implementation of the two-dimensional oracle to the task of solving one-dimensional integer minimization problems.
For notational convenience, we define $g(y):=\max_{i=1\dots m}g_i(y)$ for $y\in\R^2$ which is again a convex function.

Let $F_1,\ldots,F_4$ be the facets of $[-B,B]^2$.
Then $[-B,B]^2=\bigcup_{j=1}^{4}\conv\{x,F_j\}$.
The procedure we are about to describe has to be applied to
every facet $F_1,\ldots,F_4$ successively, until a suitable
point $\hat x$ is found.
Let us only consider one facet $F$.
We define the triangle $T_0:=\conv\{x,F\}$, whose
area is smaller than $2B^2$.

To find an improving point within $T_0$, we construct a sequence
$T_0\supset T_1\supset T_2 \supset \dots$ of triangles that all have $x$ as
vertex, with $\vol(T_{k+1})\le \frac{2}{3}\vol(T_k)$, and such that $f(\hat y)>f(x)$ or $g(\hat y)>0$ for all $\hat y\in(T_0\setminus
T_k)\cap\Z^2$. We stop our search if we have found an $\hat x\in[-B,B]^2\cap\Z^2$ such that
$f(\hat x)\le f(x)$ and $g(\hat x)\le 0$, or if the volume of one of the triangles $T_k$ is smaller than
$\frac{1}{2}$.
The latter happens after at most
$k=\lceil\ln(4B^2)/\ln(\frac{3}{2})\rceil$ steps.
Then, Lemma~\ref{2dim::lowdimension} ensures that the
integral points of $T_k$ are on a line, and we
need at most $\OO(\ln(B))$ iterations to solve the resulting
one-dimensional problem.

The iterative construction is as follows. Let $T_k=\conv\{x,v_0,v_1\}$ be given. We write
$v_\lambda:=(1-\lambda)v_0+\lambda v_1$ for $\lambda\in\R$ and we
define the auxiliary triangle
$\bar{T}_k:=\conv\{x,v_{1/3},v_{2/3}\}$.
Consider the integer linear program
\begin{equation}\label{2dim::IP}
\min\{\langle h,\hat y\rangle: \hat y \in \bar{T}_k\cap\Z^2\}
\end{equation}
where $h$ is the normal vector to $\conv\{v_0,v_1\}$ such that $\langle h,x\rangle <
\langle h,y\rangle$ for every $y\in F$. We distinguish two cases.

{\bf Case 1.} The integer linear program \eqref{2dim::IP} is
infeasible. Then $\bar{T}_k\cap\Z^2=\emptyset$. It remains to check
for an improving point within $(T_k\setminus\bar{T}_k)\cap\Z^2$. By
construction, we can apply Lemma \ref{2dim::shrinking} twice (with
$(u,u+v-w,u+v+w)$ equal to $(x,v_0,v_{2/3})$ and $(x,v_{1/3},v_1)$, respectively) to
determine whether there exists an $\hat x \in(T_k\setminus\bar{T_k}) \cap
\Z^2$ such that $f(\hat x)\le f(x)$ and $g(\hat x)\le 0$. This requires to solve at most six
one-dimensional subproblems.

{\bf Case 2.} The integer linear program (\ref{2dim::IP})
has an optimal solution $\hat z$. If $f(\hat z)\le f(x)$ and $g(\hat z)\le 0$, we are done. So we assume that $f(\hat z) > f(x)$ or $g(\hat z)> 0$.
Define $H:=\{y\in\R^2\;|\;\langle h,y\rangle=\langle h,\hat z\rangle\}$,
that is, the line containing $\hat z$ that is parallel to $\conv\{v_0,v_1\}$, 
and denote by $H_+$ the closed half-space with boundary $H$
that contains $x$.
By definition of $\hat z$, there is no integer
point in $\bar{T}_k\cap \inter H_+$.
Further, let $L:=\aff\{x,\hat z\}$.

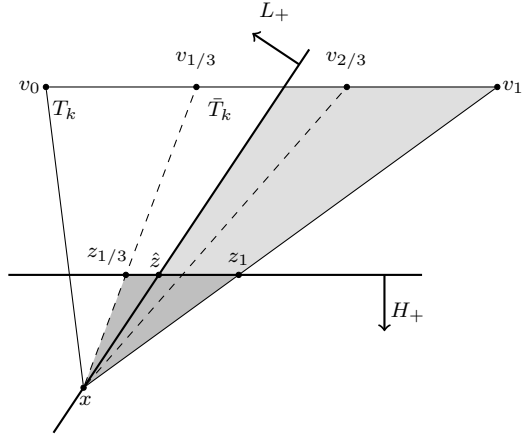
\begin{figure}[ht]
\centering
\begin{tikzpicture}[scale=1]
\tikzstyle{every node}=[circle, fill=black, draw,inner sep=0pt,
minimum width=2pt] \fill[color=gray!50] (0.5625,1.5) -- (2.0625,1.5)
-- (0,0) -- cycle; \fill[color=gray!25] (1,1.5) -- (2.0625,1.5) --
(5.5,4) -- (2.6666666666,4) -- cycle; \node (y) [label=below:$\,x$,draw]{} node
(v0) at (-0.5,4) [label=left:$v_0$]{} node (v1) at (1.5,4)
[label=above:$v_{1/3}$]{} node (v2) at (3.5,4)
[label=above:$v_{2/3}$]{}  node (v3) at (5.5,4) [label=right:$v_1$]{}
node (a) at (0.5625,1.5) [label=above left:$z_{1/3}$] {} node (b) at
(2.0625,1.5) [label=above:$z_1$] {} node (z) at (1,1.5)
[label=above:$\hat z\,\,{}$]{}; \draw {(y) -- (v0) -- (v3) -- (y)}; \draw
(-0.5,3.7) node[draw=none,fill=none,label=right:$T_k$]{}; \draw
[dashed]{(y) -- (v1)}; \draw [dashed]{(y) -- (v2)};\draw (1.55,3.7)
node[draw=none,fill=none,label=right:$\bar T_k$]{};
\draw[thick]{(-1,1.5) -- (4.5,1.5)}; \draw[thick]{(-0.4,-0.6) --
(3,4.5)};\draw[->,thick]{(4,1.5) -- (4,0.75)};
\draw[->,thick]{(2.866667,4.3) --
(2.242628945913283,4.716025147168922)}; \draw (2.25,5)
node[draw=none,fill=none,label=right:$L_+$]{}; \draw (4,1)
node[draw=none,fill=none,label=right:$H_+$]{};
\end{tikzpicture}
\caption{Illustration of Case 2.} \label{fig:figure3}
\end{figure}

Due to the
convexity of the set $\{y\in\R^2 \;|\; f(y)\le f(x),\; g(y)\le 0\}$ and the fact that $f(\hat z)>f(x)$ or $g(\hat z)> 0$, there
exists a half-space $L_+$ with boundary $L$ such that the possibly empty segment $\{y\in H \;|\;
f(y)\le f(x),\;g(y)\le 0\}$ lies in $L_+$ (see Fig.~\ref{fig:figure3}). By convexity of $f$ and $g$, the set $((T_k\setminus
H_+) \setminus L_+)$ (the lightgray region in Fig.~\ref{fig:figure3}) contains no point $y$ for which
$f(y) \leq f(x)$ and $g(y)\le 0$.
It remains to check for an improving point within
$((T_k\cap H_+) \setminus L_+)\cap\Z^2$.
For that we apply again Lemma~\ref{2dim::shrinking} on the
triangle $\conv\{z_{1/3},z_1,x\}$ (the darkgray region
in Fig.~\ref{fig:figure3}), with $z_{1/3} =
H\cap\aff\{x,v_{1/3}\}$ and $z_1 = H\cap\aff\{x,v_1\}$.
If none of the corresponding subproblems returns a suitable
point $\hat x \in \Z^2$, we know that $T_k\setminus L_+$ contains no
improving integer point. Defining $T_{k+1}:=T_k\cap L_+$, we have by construction
$f(\hat y)> f(x)$ or $g(\hat y)>0$ for all
$\hat y\in(T_k\setminus T_{k+1})\cap\Z^2$ and 
$\vol(T_{k+1})\le \frac{2}{3}\vol(T_k)$. 

It remains to determine the half-space $L_+$.
If $g(\hat z) > 0$ we just need to find a point $y\in H$
such that $g(y)<g(\hat z)$, or if $f(\hat z)>f(x)$, it
suffices to find a point $y\in H$ such that $f(y)<f(\hat
z)$.
Finally, if we cannot find such a point $y$ in either case,
convexity implies that there is no suitable point in
$T_k\setminus H_+$; another application of
Lemma~\ref{2dim::shrinking} then suffices to determine
whether there is a suitable $\hat x$ in $T_k\cap
H_+\cap\Z^2$.
\qed\end{proof}

The algorithm presented in the proof of Theorem \ref{2dim::theorem}  can be adapted to output a minimizer $\hat x^*$ of $f$ over $S\cap[-B,B]^2\cap\Z^2$, provided that we know in advance that the input point $x$ satisfies $f(x)\leq \hat f^*$: it suffices to store and update the best value of $f$ on integer points found so far. 
In this case the termination procedure is not necessary.

\begin{corollary}\label{2dim::optimization}
Let $f : \R^2 \to \R$ and $g_i : \R^2 \to \R$ with $i=1,\dots,m$ be convex functions.
Let $B\in\N$ and let $x\in[-B,B]^2$ such that $g_i(x)\le 0$ for all $i=1,\dots,m$. If $f(x)\le \hat f^*$, then, in a number of evaluations of $f$ and $g_1,\dots,g_m$ that is polynomial in $\ln(B)$, one can either
\begin{enumerate}[(a)]
  \item find an $\hat x\in [-B,B]^2\cap\mathbb{Z}^2$ with $f(\hat x)= \hat f^*$ and $g_i(\hat x)\leq 0$ for all $i=1,\dots,m$ or
  \item show that there is no such point.
\end{enumerate}
\end{corollary}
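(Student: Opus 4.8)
The plan is to reuse the algorithm from the proof of Theorem~\ref{2dim::theorem} almost verbatim, turning its improvement oracle into an exact solver by exploiting the hypothesis $f(x)\le\hat f^*$ together with a simple bookkeeping of the best integer point it encounters. The termination procedure of Algorithm~\ref{algo:termination} should then be avoidable altogether.

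First I would record the decisive consequence of the hypothesis: every feasible integer point $\hat y\in S\cap[-B,B]^2\cap\Z^2$ satisfies $f(\hat y)\ge\hat f^*\ge f(x)$, so the minimum of $f$ over the feasible integer points equals $\hat f^*$, and such a point is \emph{improving} (i.e.\ $f(\hat y)\le f(x)$) if and only if it is optimal. In particular, whenever the algorithm of Theorem~\ref{2dim::theorem} returns a point in case~(a), the chain $f(\hat x)\le f(x)\le\hat f^*\le f(\hat x)$ forces $f(\hat x)=\hat f^*$, so that point is already optimal and may be output directly.

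Next I would modify the one-dimensional leaves of the algorithm: rather than merely testing whether some integer point on a line $\ell$ meets $f\le f(x)$ and $g\le0$, each subproblem should return the true value $\min\{f(\hat y):\hat y\in\ell\cap\Z^2,\ g(\hat y)\le0\}$ with a minimizer, and I would keep the best feasible integer point seen so far. Because $f$ is finite-valued and convex, this is still computed by bisection in $\OO(\ln B)$ evaluations, exactly as in the proof of Theorem~\ref{2dim::theorem}; with $\OO(\ln B)$ shrinking steps and $\OO(1)$ subproblems per step, the total number of evaluations stays polynomial in $\ln B$. If no feasible integer point is ever recorded, we are in case~(b).

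The crux, and the step I expect to be the real obstacle, is the correctness of this bookkeeping: one must show the minimizer actually lands on a line examined by some one-dimensional subproblem. In the improvement-oracle proof certain sub-regions are dropped \emph{without} a one-dimensional sweep, using only convexity of the sublevel set $\{y:f(y)\le f(x),\ g(y)\le0\}$ to certify the absence of an improving point (the lightgray region of Fig.~\ref{fig:figure3}). When $f(x)<\hat f^*$ strictly, such a region may still contain an optimal integer point, one with $f=\hat f^*>f(x)$, so these convexity discards become inadmissible. I would therefore replace each convexity discard by an explicit sweep of the discarded strip, and the heart of the matter is to verify that Lemma~\ref{2dim::shrinking} still applies there, namely that the emptiness certificate produced by the integer program \eqref{2dim::IP} (no lattice point of $\bar T_k$ in $\inter H_+$) likewise confines the lattice points of the discarded strip to $\OO(1)$ lines. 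Granting this, every feasible integer point of $[-B,B]^2$ lies on a line inspected by some one-dimensional minimization, the stored record equals $\hat f^*$, and the bound polynomial in $\ln B$ is preserved.
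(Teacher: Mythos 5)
Your overall plan --- rerun the algorithm of Theorem~\ref{2dim::theorem} with every one-dimensional subproblem upgraded to a genuine constrained minimization and with an incumbent recording the best feasible integer point seen so far --- is exactly the paper's adaptation (it is what Algorithm~\ref{algo:2d} implements), and your observation that any improving point is automatically optimal under $f(x)\le\hat f^*$ is correct. The difficulty is the step you explicitly ``grant'': that the convexity-discarded strip of Case~2 (the lightgray region $(T_k\setminus H_+)\setminus L_+$ of Fig.~\ref{fig:figure3}) can be handled by a sweep because Lemma~\ref{2dim::shrinking} confines its lattice points to $\OO(1)$ lines. This is not available: the optimality of $\hat z$ for \eqref{2dim::IP} only certifies that $\bar T_k\cap\inter H_+$ is lattice-free, which says nothing about the far side of $H$. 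The discarded strip is the trapezoid $\conv\{\hat z,z_1,v_1,v\}$ sitting between $H$ and the facet $F$; it can contain of the order of $B$ lattice points in general position, so no $\OO(1)$-line sweep of it exists and your algorithm either misses these points or loses the bound polynomial in $\ln B$.

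The repair is not an extra sweep but a different justification for keeping the discard, which is what the paper relies on implicitly. Let $y$ be any point of the lightgray region and let $y'$ be the intersection of the segment $[x,y]$ with $H$; since $x\in L$ and $y\notin L_+$, the point $y'$ lies strictly on the ``bad'' side of $\hat z$ along $H$, where the side was selected so that the relevant function does not decrease. If $g(\hat z)>0$, convexity along $H$ gives $g(y')\ge g(\hat z)>0$, and $g(y')\le\lambda g(x)+(1-\lambda)g(y)\le(1-\lambda)g(y)$ (using $g(x)\le0$ and $\lambda\in(0,1)$) forces $g(y)>0$: the strip contains no feasible point at all. If $g(\hat z)\le0$, the same one-dimensional argument applied to $f$ gives $f(y')\ge f(\hat z)$, and combining $f(y')\le\lambda f(x)+(1-\lambda)f(y)$ with the hypothesis $f(x)\le\hat f^*\le f(\hat z)$ yields $f(y)\ge f(\hat z)$. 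Since $\hat z$ is itself a feasible lattice point lying on the part of $H$ that \emph{is} swept (it belongs to $\conv\{x,z_{1/3},z_1\}$, respectively $\conv\{x,z_0,z_{2/3}\}$), the incumbent is already at most $f(\hat z)$ after that sweep, so nothing in the discarded strip can beat it. With this argument replacing your proposed sweep, the rest of your proposal (bookkeeping, $\OO(\ln B)$ evaluations per one-dimensional subproblem, $\OO(\ln B)$ shrinking steps, case~(b) when no feasible point is ever recorded) goes through and coincides with the paper's proof.
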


\LinesNumbered
\begin{algorithm}[htbp]
\DontPrintSemicolon 
\KwData{$x\in[-B,B]^2$ with $f(x)\le\hat f^*$ and $g_i(x)\leq 0$ for all $i=1,\dots,m$.} 
Let $F_1,\ldots,F_4$ be the facets of $[-B,B]^2$.\;
Set $\hat x^*:=0$ and $\hat f^*:=+\infty$.\;
\For{$t=1,\dots,4$}{
	Set $F:=F_t$ and define $v_0,v_1\in\R^n$ such that $F:=\conv\{v_0,v_1\}$.\;
	Write $h$ for the vector normal to $F$ pointing outwards $[-B,B]^2$.\;
	Set $T_0:=\conv\{x,F\}$ and $k:=0$.\;
	\While{$\vol(T_k)\geq \frac{1}{2}$}{
		Set $\bar T_k:=\conv\{x,v_{1/3},v_{2/3}\}$, with $v_\lambda:=(1-\lambda)v_0 + \lambda v_1$.\;
		Solve $(\mathcal{P}):\min\{\langle h,\hat y\rangle: \hat y\in\bar T_k\cap\Z^2\}$.\label{algo:2d:ip}\label{algo:2d:lp}\;
		\uIf{(Case 1) $(\mathcal{P})$ is infeasible,}{
			Determine $\hat x := \arg\min\{f(\hat z)\;|\;\hat z\in(T_k\setminus\bar T_k)\cap\Z^2\text{ with }g(\hat z)\le0\}$.\label{algo:2d:shrinking:1}\;
			\lIf{$\hat x$ exists and $f(\hat x)<\hat f^*$}{Set $\hat x^*:=\hat x$ and $\hat f^*:=f(\hat x)$.\;}
			}
		\Else{
			{\it (Case 2)} Let $\hat z$ be an optimal solution of $(\mathcal{P})$.\;
			Set $H_+:=\{y\in\R^2:\langle h,y\rangle\leq\langle h,\hat z\rangle\}$ and $H:=\partial H_+$.\;
			Define the points $v := \aff\{x,\hat z\}\cap F$ and $z_i=H\cap\conv\{x,v_i\}$ for $i = 0,1$.\;
			Denote $z_\lambda := (1-\lambda)z_0 + \lambda z_1$ for $\lambda\in(0,1)$.\;
				\uIf{$g(\hat z)\le0$ {\bf and} there is a $y\in\conv\{z_0,\hat z\}$ for which $f(y)<f(\hat z)$ {\bf or}\label{algo:2d:left1}\;
				       $\;\;\;\;g(\hat z)>0$ {\bf and} there is a $y\in\conv\{z_0,\hat z\}$ for which $g(y)<g(\hat z)$ \label{algo:2d:left2}}{
					Determine $\hat x := \arg\min\{f(\hat z)\;|\;\hat z\in\conv\{x,z_{1/3},z_1\}\cap\Z^2\text{ with }g(\hat z)\le0\}$.\label{algo:2d:shrinking:2}\;
					\lIf{$\hat x$ exists and $f(\hat x)<\hat f^*$}{Set $\hat x^*:=\hat x$ and $\hat f^*:=f(\hat x)$.\;}
					Set $v_1:=v$, $T_{k+1}:=\conv\{x,v_0,v\}$, and $k:=k+1$.\;
					}
				\Else{
					Determine $\hat x := \arg\min\{f(\hat z)\;|\;\hat z\in\conv\{x,z_0,z_{2/3}\}\cap\Z^2\text{ with }g(\hat z)\le0\}$.\label{algo:2d:shrinking:3}\;
					\lIf{$\hat x$ exists and $f(\hat x)<\hat f^*$}{Set $\hat x^*:=\hat x$ and $\hat f^*:=f(\hat x)$.\;}
					Set $v_0:=v$, $T_{k+1}:=\conv\{x,v,v_1\}$, and $k:=k+1$.\;
					}

			}	
		}
		Determine $\hat x := \arg\min\{f(\hat z)\;|\;\hat z\in T_k\cap\Z^2\text{ with }g(\hat z)\le0\}$.\label{algo:2d:lowdimension}\;
		\lIf{$\hat x$ exists and $f(\hat x)<\hat f^*$}{Set $\hat x^*:=\hat x$ and $\hat f^*:=f(\hat x)$.\;}
	}
\lIf{$\hat f^*<+\infty$}{Return {$\hat x^*$}.\;}
\lElse{Return ``the problem is unfeasible''.} \vspace*{2mm}
\caption{Minimization algorithm for 2D problems.\label{algo:2d}} 
\end{algorithm}

Note that line \ref{algo:2d:lowdimension} in Algorithm~\ref{algo:2d} requires the application of Lemma \ref{2dim::lowdimension}.
Lines \ref{algo:2d:shrinking:1}, \ref{algo:2d:shrinking:2} and \ref{algo:2d:shrinking:3} require the application of Lemma \ref{2dim::shrinking}. 

\begin{remark}[Complexity]\label{rem:complexity2d}
The following subroutines are used in Algorithm~\ref{algo:2d}.
\begin{description}
 \item[Line \ref{algo:2d:lp} and applications of Lemma
   \ref{2dim::shrinking}. ] A two-dimensional integer linear
   program solver for problems having at most four constraints, such as the one described in \cite{Heisenbrand2d}. The size of the data describing each of these constraints is in the order of the representation of the vector $x$ as a rational number, which, in its standard truncated decimal representation, is in $\OO(\ln(B))$.
 \item[Line \ref{algo:2d:lowdimension} and applications of Lemma
   \ref{2dim::shrinking}. ] A solver for one-dimensional
   integer convex optimization problems. At every iteration, we need to perform at most seven of them, for a cost of $\OO(\ln(B))$ at each time.
 \item[Lines \ref{algo:2d:left1} and \ref{algo:2d:left2}.]
   Given a segment $[a,b]$ and one of its points $z$, we
   need a device to determine which of the two regions
   $[a,z]$ or $[z,b]$ intersects a level set
   defined by $f$ and $g$ that does not contain $z$.
   This procedure has a complexity of $\OO(\ln(B))$ and only
   occurs in Case 2 above. $\hfill\diamond$
\end{description}
\end{remark}

\subsection{Finding the $k$-th best point}\label{subsection:k-th point}

In this subsection we want to show how to find the $k$-th best point, provided that the $k-1$ best points are known.
A slight variant of this problem will be used in Subsection~\ref{sec:MIPgen} as a subroutine for the general mixed-integer convex problem.
In the following, we describe the necessary extensions of the previous Algorithm~\ref{algo:2d}.
Let $\hat x_1^*:=\hat x^*$ and define for $k\geq 2$:
\begin{equation*}
\hat x_k^*:=\arg\min\left\lbrace f(\hat x) \;|\; \hat x\in (S\cap [-B,B]^2\cap \mathbb{Z}^2)\setminus \{\hat x_1^*,\dots,\hat x_{k-1}^*\}\right\rbrace
\end{equation*}
to be the $k$-th best point.
Observe that, due to the convexity of $f$ and $g_1,\dots,g_m$, we can always assume that $\conv\{\hat x_1^*,\dots,\hat x_{k-1}^*\}\cap\mathbb{Z}^2=\{\hat x_1^*,\dots,\hat x_{k-1}^*\}$ for all $k\geq 2$.
Although this observation appears plausible it is not completely trivial to achieve this algorithmically.

\begin{lemma}\label{lem:P_k cap Z^2 = S_k}
Let $\Pi_j:=\{\hat x^*_1,\ldots,\hat x^*_j\}$ be the ordered $j$ best points of our problem and $P_j$ be the convex hull of $\Pi_j$. Suppose that, for a given $k\geq 2$, we have $P_{k-1}\cap\Z^2 = \Pi_{k-1}$. Let $\hat x^*_k$ be a $k$-th best point.
\begin{enumerate}[(a)]
 \item If $f(\hat x_k^*)>\hat f^*$, we can replace the point $\hat x_k^*$ by a feasible $k$-th best point $\hat z_k^*$ such that $\conv\{\Pi_{k-1},\hat z^*_k\} \cap\Z^2 = \{\Pi_{k-1},\hat z^*_k\}$
 in $\OO(1)$ operations.
 \item If $f(\hat x_k^*)=\hat f^*$, and if we have at our disposal the $\nu$ vertices of $P_{k-1}$ ordered counterclockwise, we can construct such a point $\hat z_k^*$ in $\OO(\nu\ln(B))$ operations.
\end{enumerate}
\end{lemma}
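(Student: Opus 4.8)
The plan is to treat both parts through one common observation and then split on the position of the optimum. Write $C:=\{z\in\R^2: f(z)\le f(\hat x_k^*)\ \text{and}\ g_i(z)\le 0\ \text{for all }i\}$, a convex set containing $\Pi_{k-1}\cup\{\hat x_k^*\}$. If $w\in\conv\{\Pi_{k-1},\hat x_k^*\}\cap\Z^2$, then $w$ is a convex combination of points of $C$, so $f(w)\le f(\hat x_k^*)$ and $w$ is feasible; if moreover $w\notin\Pi_{k-1}$, the definition of the $k$-th best point forces $f(w)\ge f(\hat x_k^*)$, hence $f(w)=f(\hat x_k^*)$ and $w$ is itself a legitimate $k$-th best point. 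Thus every lattice point swallowed when we glue $\hat x_k^*$ to $P_{k-1}$ is an admissible replacement, and the whole task reduces to choosing a $k$-th best point whose hull with $\Pi_{k-1}$ is lattice-clean.

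Part (a). Here $f(\hat x_k^*)>\hat f^*$, so $\hat x_1^*\in\Pi_{k-1}$ lies in the interior of $C$ while each $k$-th best point has $f$-value exactly $f(\hat x_k^*)$ and therefore lies on $\partial C$. Combined with the previous paragraph this gives the crucial rigidity: any intruder $w$ satisfies $w\in\partial C$, and since $\conv\{\Pi_{k-1},\hat x_k^*\}\subseteq C$, a boundary point of $C$ cannot be interior to this hull; hence $w$ lies on $\partial\conv\{\Pi_{k-1},\hat x_k^*\}$. As the lattice points on the $P_{k-1}$-part of that boundary already belong to $\Pi_{k-1}$, every intruder must sit in the relative interior of one of the (at most two) tangent segments joining $\hat x_k^*$ to its two supporting vertices $a,b$ of $P_{k-1}$. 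In particular the open ``ear'' $\conv\{a,b,\hat x_k^*\}\setminus P_{k-1}$ contains no lattice point off these two segments. A clean replacement can then be read off locally: it suffices to move the apex to the appropriate lattice neighbour along a tangent segment so that both resulting tangent segments become primitive, which is a constant amount of gcd/lattice bookkeeping, giving the $\OO(1)$ bound.

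Part (b). Now $f(\hat x_k^*)=\hat f^*$, so $C$ equals the feasible set of minimizers and the interior argument of part (a) is unavailable---all candidates lie on $C$ itself. I would instead exploit the given counterclockwise list of the $\nu$ vertices of $P_{k-1}$ and search outward edge by edge. For each edge $e_i=[p_i,p_{i+1}]$ with outward normal, run one $\OO(\ln B)$ routine (bisection together with the fast two-variable integer-programming solver of \cite{Heisenbrand2d} restricted to the convex region $C$) to locate a feasible integer minimizer lying immediately outside $e_i$, i.e.\ on the first lattice line parallel to $e_i$ that meets $C$ on the far side. Taking the candidate that forms a lattice-empty triangle over its edge yields a $\hat z_k^*$ with $\conv\{\Pi_{k-1},\hat z_k^*\}\cap\Z^2=\{\Pi_{k-1},\hat z_k^*\}$; since there are $\nu$ edges and each search costs $\OO(\ln B)$, the total is $\OO(\nu\ln B)$.

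The main obstacle, in both parts, is the lattice-emptiness (primitivity) bookkeeping that certifies cleanness. In (a) one must check that a single local adjustment of the apex simultaneously makes both tangent segments primitive rather than merely fixing one side, so that no further, possibly far-away, intruder survives; the rigidity that all intruders lie on two explicit segments is exactly what keeps this at $\OO(1)$. In (b) one must prove that at least one of the $\nu$ outward searches produces a triangle with no interior or spurious boundary lattice points, and handle the degenerate cases where the supporting line meets $C$ tangentially or where $\hat x_k^*$ lies on an edge-line of $P_{k-1}$.
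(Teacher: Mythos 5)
Your opening observation (every lattice point swallowed by $\conv\{\Pi_{k-1},\hat x_k^*\}$ is itself a legitimate $k$-th best point) matches the paper's, and your part~(a) is a valid alternative route: the paper confines intruders to segments by showing each intruder lies in the relative interior of a segment $Q_I$ on which $f$ is constant (a two-dimensional $Q_I$ being impossible because $f(\hat x_1^*)<f(\hat x_k^*)$), whereas you get the same confinement from the fact that intruders lie on $\partial C$ and hence on the two tangent segments. (Minor quibble: $\hat x_1^*$ need not be in the topological interior of $C$; what you actually need, and what is true, is that any point of $C$ with $f$-value equal to $f(\hat x_k^*)$ is a boundary point of $C$ because $C$ contains a point of strictly smaller value.) However, the step you yourself flag --- that moving the apex to one lattice neighbour ``simultaneously makes both tangent segments primitive'' --- is the entire content of the construction and is left unproved; it is also not the right thing to verify. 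The correct argument (implicit in the paper) is: take $\hat z_k^*$ adjacent to the \emph{tangent vertex} $a$ (not to the apex $\hat x_k^*$) on a tangent segment carrying an intruder; then $\conv\{P_{k-1},\hat z_k^*\}\subseteq\conv\{P_{k-1},\hat x_k^*\}$, so any surviving intruder would be an old intruder, hence on one of the two old tangent lines, and one checks directly that the new hull meets each of those lines only in $P_{k-1}\cup\{\hat z_k^*\}$.

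Part~(b) contains a genuine gap, and it is exactly where your approach departs from the paper's. Your selection rule --- pick, for each edge $e_i$ of $P_{k-1}$, an integer point of $C$ on the first lattice line outside $e_i$, and keep a candidate whose triangle over its edge is lattice-empty --- does not certify that $\conv\{\Pi_{k-1},\hat z_k^*\}$ is clean, because the convex hull wraps around the vertices of $P_{k-1}$ and can capture lattice points far from that thin triangle. Concretely, take $\Pi_{k-1}$ to be the four corners of $[0,1]^2$ and $C=\conv\{[0,1]^2,(2,10)\}$ (all of $C$ consists of feasible minimizers). For the edge $[(1,0),(1,1)]$ the first lattice line outside is $x=2$, the only candidate is $(2,10)$, and the triangle $\conv\{(1,0),(1,1),(2,10)\}$ has area $\tfrac12$ and is lattice-empty --- yet $\conv\{[0,1]^2,(2,10)\}$ contains $(1,2),\dots,(1,5)$, so the hull is not clean. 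Your rule therefore accepts a bad candidate; only the top edge happens to produce a good one, and nothing in your argument shows that some edge always does (you acknowledge this but do not close it; the degenerate cases $\nu\le 2$, where there may be no edge with the required outward structure, are also uncovered). The paper avoids all of this by \emph{starting from the given} $\hat x_k^*$ and iteratively pulling it inward: for each edge of $P_{k-1}$ visible from the current candidate it solves one integer linear program over the triangle spanned by that edge and the candidate, replaces the candidate by the optimum, and proves by induction that the union of the swept triangles contains no integer point other than the current candidate. That recursion is the missing idea; without it (or a proof that one of your $\nu$ one-shot searches must succeed) the $\OO(\nu\ln B)$ claim is not established.
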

\proof
\smartqed
{\it Part (a).} Suppose first that $f(\hat x^*_k)>\hat f^*$, and assume that we cannot set $\hat z^*_k:=\hat x^*_k$, that is, that there exists $\hat x\in (P_k\cap\Z^2)\setminus \Pi_k$. Then $\hat x = \sum_{i=1}^k\lambda_i\hat x_i^*$ for some $\lambda_i\geq 0$ that sum up to $1$. Note that $0<\lambda_k<1$, because $\hat x\notin P_{k-1}\cup\{\hat x^*_k\}$ by assumption, and that $f(\hat x)\geq f(\hat x^*_k)$. We deduce:
\[
0\leq f(\hat x)-f(\hat x^*_k) \leq \sum_{i=1}^k\lambda_i(f(\hat x_i^*)-f(\hat x^*_k))\leq 0.
\]
Thus $f(\hat x)=f(\hat x^*_k)$. Let $I:=\{i:\lambda_i>0\}$
and $Q_I:=\conv\{\hat x^*_i:i\in I\}$, so that $\hat
x\in\relint Q_I$.
Observe that $|I|\geq 2$ and that $f$ is constant on $Q_I$.
Necessarily, $Q_I$ is a segment.
Indeed, if it were a two-dimensional set, we could consider
the restriction of $f$ on the line $\ell := \aff\{\hat
x^*_1,\hat x\}$: it is constant on the open interval
$\ell\cap \inter Q_I$, but does not attain its minimum on
it, contradicting the convexity of $f$.
Let us now construct the point $\hat z^*_k$:
it suffices to consider the closest point to $\hat x^*_k$ in
$\aff\{Q_I\}\cap P_{k-1}$, say $\hat x_j^*$, and to take the
integer point $\hat z^*_k\neq\hat x_j^*$ of $\conv\{\hat
x_j^*,\hat x_k^*\}$ that is the closest to $\hat x_j^*$
(see Fig.~\ref{fig:k-th.best.point}).
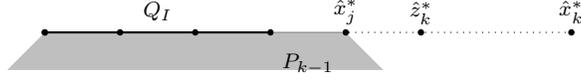
\begin{figure}[ht]
\centering
\begin{tikzpicture}[scale=1]
\tikzstyle{every node}=[circle, fill=black, draw,inner sep=0pt,
minimum width=2pt] 
\fill[color=gray!50] (0,0) -- (4,0) --
(4.5,-0.5) -- (-0.5,-0.5) -- cycle;
\node (x1) {}
node (xj) at (4,0) [label=above:$\hat x^*_j$,draw]{}
node (xk) at (7,0) [label=above:$\hat x^*_k$,draw]{} 
node (zk) at (5,0) [label=above:$\hat z^*_k$]{} 
node (x0) at (0,0) {}
node (x1) at (1,0) {}
node (x2) at (2,0) {}
node (x3) at (3,0) {}; 
\draw[thick] {(x0) -- (x3)}; \draw[gray] {(x3) -- (xj)}; \draw[dotted] {(xj) -- (xk)};
\draw(1.5,0) node[draw=none,fill=none,label=above:$Q_I$]{};
\draw(3.5,0) node[draw=none,fill=none,label=below:$P_{k-1}$]{};
\end{tikzpicture}
\caption{Illustration of Part (a).} \label{fig:k-th.best.point}
\end{figure}

\noindent {\it Part (b).}
Suppose now that $f(\hat x^*_i) = \hat f^*$ for every $1\leq i\leq k$, and define 
\[
\{\hat y_0^*\equiv \hat y^*_\nu,\hat y_1^*,\ldots,\hat y^*_{\nu-1}\}\subseteq \Pi_{k-1}
\] as the vertices of $P_{k-1}$, labeled
counterclockwise.
It is well-known that determining the convex hull of
$P_{k-1}\cup\{\hat x_k^*\}$ costs $\OO(\ln(\nu))$
operations.
From these vertices we deduce the set $\{\hat y^*_i:i\in J\}$ of those points that
are in the relative interior of that convex hull. Up to a
renumbering of the $\hat y^*_l$'s, we have $J =
\{1,2,\ldots,j-1\}$. We show below that Algorithm
\ref{algo:P_k cap Z^2} constructs a satisfactory point
$\hat z^*_k$.


Let us denote by $\hat x^*_k(i)$ the point $\hat x_k^*$ that the algorithm has at the beginning of iteration $i$ and define 
$T_l(i):=\conv\{\hat x^*_k(i),\hat y^*_l,\hat
y^*_{l+1}\} \setminus P_{k-1}$ for $0\leq l < j$ (see
Fig.~\ref{fig:figure_P_k_cap_Z^2}).
At iteration $i$, the algorithm considers the triangle $T_i(i)$ if its signed area $\frac{1}{2}\det(\hat x^*_k(i)-\hat y^*_i,\hat y^*_{i+1}-\hat y^*_i)$ is nonnegative, and finds a point $\hat x^*_k(i+1)\in T_i(i)$ such that $T_i(i+1)$ has only $\hat x^*_k(i+1)$ as integer point. 

We prove by recursion on $i$ that $T_l(i)$ contains only $\hat x^*_k(i)$ as integer point whenever $l<i$. We already noted it when $i = 0$. Suppose the statement is true for $i$, and let $l\leq i$. We have:
\[
\hat x^*_k(i+1)\in T_i(i)\subseteq\conv\{\hat x^*_k(i),\hat y^*_0,\ldots,\hat y^*_{i+1}\}\setminus P_{k-1}=T_i(i)\cup \bigcup_{l=0}^{i-1}T_l(i),
\]
hence
\[
K:=\conv\{\hat x^*_k(i+1),\hat y^*_0,\ldots,\hat y^*_{i+1}\}\setminus P_{k-1} \subseteq\conv\{\hat x^*_k(i),\hat y^*_0,\ldots,\hat y^*_{i+1}\}\setminus P_{k-1}.
\]
As $T_l(i+1)\subseteq K$ for all $l \le i$, the integers of $T_l(i+1)$ are either in $\bigcup_{l=0}^{i-1}T_l(i)\cap\Z^2$, which reduces to $\{\hat x^*_k(i)\}$ by recursion hypothesis, or in $T_i(i)$. Since $\hat x^*_k(i)\in T_i(i)$, all the integers in $T_l(i+1)$ must be in $T_i(i)$. But $T_l(i+1) \cap T_i(i)\cap\Z^2 = \{\hat x^*_k(i+1)\}$ by construction of $\hat x^*_k(i+1)$, and the recursion is proved.

It remains to take the largest value that $i$ attains in the
course of Algorithm~\ref{algo:P_k cap Z^2} to finish the
proof.
We need to solve at most $\nu-1$
two-dimensional integer linear
problems over triangles to compute $\hat x^*_k$; the data of
these problems are integers bounded by $B$.
\qed

\begin{algorithm}[htbp]
\LinesNumberedHidden
\DontPrintSemicolon
\KwData{$\hat x^*_k, \hat y_0^*,\hat y_1^*,\ldots,\hat y^*_j$.}
Set $i:=0$ and $x^*_k(0) := x^*_k$.\;
	\While{ $\det(\hat x^*_k(i)-\hat y^*_i,\hat y^*_{i+1}-\hat y^*_i)\geq 0$}{
	Set $T_i:=\conv\{\hat x^*_k(i),\hat y^*_i,\hat y^*_{i+1}\}\setminus\aff\{\hat y^*_i,\hat y^*_{i+1}\}$.\;
	Set $h_i$ a vector orthogonal to $\aff\{\hat y^*_i,\hat y^*_{i+1}\}$ such that $\langle h_i,\hat x^*_k(i)-\hat y^*_i\rangle>0$.\;
	Set $\hat x^*_k(i+1):=\arg\min\{\langle h_i,\hat
        x\rangle:\hat x \in T_i \cap \Z^2\}$.\;
	Set $i:=i+1$.\;
	}
        Set $\hat z^*_k := \hat x^*_k(i)$. \\ \quad
\caption{A point $\hat z^*_k$ for which $\conv\{\Pi_{k-1},\hat z_k^*\}\cap\mathbb{Z}^2=\{\Pi_{k-1},\hat z_k^*\}$.\label{algo:P_k cap Z^2}} 
\end{algorithm}

\begin{figure}[ht]
\centering
\begin{tikzpicture}[scale=0.5]
\tikzstyle{every node}=[circle, fill=black, draw,inner sep=0pt,
 minimum width=2pt] 
\tikzstyle{every pin edge}=[<-,shorten <=1pt]
\fill[color=gray!25] (10,5) -- (8,3) -- (4,1) -- cycle;
\fill[color=gray!50] (8,1) -- (8,3) -- (4,1) -- cycle;
\node (xk0)  [label=left:$\hat x^*_k(0) \equiv \hat x^*_k(1)$,draw] {}
node (xk2) at (4,1) [pin=above:$\hat x^*_k(2)$,draw]{}
node (y0) at (10,5) [label=below:$\hat y^*_0$,draw]{} 
node (y1) at (8,3) [label=right:$\hat y^*_1$]{} 
node (y2) at (8,1) [label=right:$\hat y^*_2$]{}
node (y3) at (9,0) {}
node (y4) at (11,-0.2) [label=above:$\hat y^*_j$]{}
node (y5) at (12,1){}
node (y6) at (11.5,4) {}; 
\draw {(y2) -- (y3) --(y4) -- (y5) -- (y6) -- (y0)}; 
\draw[dashed] {(y0) -- (y1) --(y2)}; 
\draw[dotted] {(xk0) -- (y0)};
\draw[dotted] {(xk0) -- (y1)};
\draw[dotted] {(xk0) -- (y2)};
\draw[dotted] {(xk0) -- (y3)};
\draw[dotted] {(xk0) -- (y4)};
\draw {(xk2) -- (y0)};
\draw {(xk2) -- (y1)};
\draw {(xk2) -- (y2)};
\draw {(xk2) -- (y3)};
\draw(11.5,2.5) node[draw=none,fill=none,label=left:$P_{k-1}$]{};
\draw(6,1.5) node[draw=none,fill=none,label=right:$T_1(2)$]{};
\draw(7.5,2.9) node[draw=none,fill=none,pin=above:$T_0(2)$]{};
\end{tikzpicture}
\caption{Constructing $P_k$ from $P_{k-1}$.}\label{fig:figure_P_k_cap_Z^2}
\end{figure}
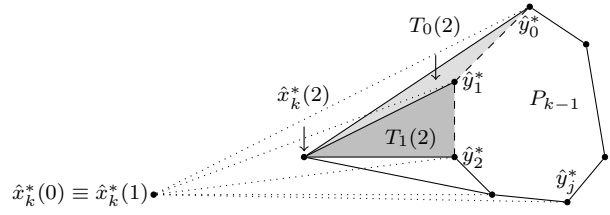

By Lemma~\ref{lem:P_k cap Z^2 = S_k}, the $k$-th best point
$\hat x_k^*$ can be assumed to be contained within
$[-B,B]^2\setminus\conv\{\hat x_1^*,\dots,\hat x_{k-1}^*\}$.
This property allows us to design a straightforward
algorithm to compute this point.
We first construct an inequality description of $\conv\{\hat
x_1^*,\dots,\hat x_{k-1}^*\}$, say
$\langle a_i, x \rangle \le b_i$ for $i \in I$ with
  $|I| < + \infty$.
Then \[[-B,B]^2\setminus\conv\{\hat x_1^*,\dots,\hat
x_{k-1}^*\}=\bigcup_{i \in
  I}\{x\in[-B,B]^2\;|\; \langle a_i, x \rangle>b_i\}.\]
As the feasible set
is described as a union of simple convex
sets, we could apply Algorithm~\ref{algo:mirror}
once for each of them.
However, instead of choosing this straightforward
approach one can do better: one
can avoid treating each element of this disjunction
separately by modifying Algorithm~\ref{algo:2d}
appropriately.

Suppose first that $k=2$.
To find the second best point, we apply
Algorithm~\ref{algo:2d} to the point $\hat x_1^*$ with the
following minor modification:
in Line~\ref{algo:2d:ip}, we replace $(\mathcal{P})$ with
the integer linear problem
$(\mathcal{P}'):\min\{\langle h,\hat y\rangle: \hat y\in\bar
T_k\cap\Z^2,\; \langle h,\hat y\rangle\ge\langle h,\hat
x_1\rangle+1\}$,
where $h \in \Z^2$ such that $\gcd(h_1,h_2) = 1$.
This prevents the algorithm from returning $\hat
x_1^*$ again.\label{page:tweak k=2}

Let $k\ge3$.
Let $\hat y^*_0,\dots,\hat y^*_{\nu-1},\hat y^*_\nu\equiv
\hat y^*_0$ denote the vertices of $P_{k-1}$, ordered
counterclockwise (they can be determined in
$\OO(k\ln(k))$ operations using the Graham Scan
\cite{Graham}).
Recall that the point we are looking for is not in $P_{k-1}$.

Let us call a triangle with a point $\hat y^*_i$ as vertex
and with a segment of the boundary of $[-B,B]^2$
as opposite side a \emph{search triangle}
(see Fig.~\ref{fig:triangulation2}:
every white triangle is a search triangle).
The idea is to decompose $[-B,B]^2\setminus P_{k-1}$ into
search triangles, then to apply
Algorithm~\ref{algo:2d} to these triangles
instead of $(\conv\{x,F_t\})_{t=1}^4$.

For each $0\leq i<\nu$, we define
$H_i:=\{y\in\R^2:\det(y-\hat y^*_i,\hat y^*_{i+1}-\hat
y^*_i)\geq 0\}$, so that
$H_i\cap P_{k-1} = \conv\{\hat y^*_i,\hat y^*_{i+1}\}$.
Consider the regions $R_i:=([-B,B]^2\cap H_i)\setminus
\inter H_{i-1}$. Note that $R_i$ contains only $\hat y^*_i$
and $\hat y^*_{i+1}$ as vertices of $P_{k-1}$.
Also, at most four of the $R_i$'s are no search
triangles.
If $R_i$ is such, we triangulate it into (at least two)
search triangles by inserting chords from $\hat y^*_i$ to
the appropriate vertices of $[-B,B]^2$.
%
%
%

\begin{figure}[ht]
\begin{minipage}[b]{0.50\linewidth}
\centering
\begin{tikzpicture}[scale=0.8]
\tikzstyle{every node}=[circle, fill=black, draw,inner sep=0pt,minimum width=2pt]
\draw{(0,0) -- (5,0) -- (5,5) -- (0,5) -- (0,0)};
\draw [dotted]{(1.5,1.833333) -- (0,1.3333333) };
\fill[color=gray!25] (2,3) -- (2,2) -- (3.5,2.5) -- (2,3);
\draw (2.2,2.5) node[draw=none,fill=none,label=right:$P_{k-1}$]{};
\node (v0) at (2,3) [label=above:$\hat y^*_0$]{} 
node (v1) at (2,2) [label=left:$\hat y^*_1$,fill]{} 
node (v2) at (3.5,2.5) [label=below:$\hat y^*_2$]{};
\draw {(v0) -- (2,0) };
\draw {(v1) -- (5,3) };
\draw {(v2) -- (0,3.6666666) };
\draw [->]{(1,1.6666666) -- (1.2,1.0666666) };
\draw (1.2,1.0666666) node[draw=none,fill=none,label=left:$H_1$]{};
\draw (4.5,0.5) node[draw=none,fill=none,label=left:$R_1$]{};
\end{tikzpicture}
\caption{Triangulation step 1.}\label{fig:triangulation1}
\end{minipage}
\hspace{0.5cm}
\begin{minipage}[b]{0.50\linewidth}
\centering
\begin{tikzpicture}[scale=0.8]
\tikzstyle{every node}=[circle, fill=black, draw,inner sep=0pt,minimum width=2pt]
\draw{(0,0) -- (5,0) -- (5,5) -- (0,5) -- (0,0)};
\fill[color=gray!25] (2,3) -- (2,2) -- (3.5,2.5) -- (2,3);
\draw (2.2,2.5) node[draw=none,fill=none,label=right:$P_{k-1}$]{};
\node (v0) at (2,3) [label=above:$\hat y^*_0$]{} 
node (v1) at (2,2) [label=left:$\hat y^*_1$]{} 
node (v2) at (3.5,2.5) [label=below:$\hat y^*_2$]{};
\draw {(v0) -- (2,0) (v0) -- (0,0)};
\draw {(v1) -- (5,3) (v1) -- (5,0)};
\draw {(v2) -- (0,3.6666666) (v2) -- (0,5) (v2) -- (5,5)};
\end{tikzpicture}
\caption{Triangulation step 2.}\label{fig:triangulation2}
\end{minipage}
\end{figure}

Note that a search triangle can contain two
or more integer points of $P_{k-1}$.
In order to prevent us from outputting one of those, we need
to perturb the search triangles slightly before
using them in Algorithm~\ref{algo:2d}.
Let $T = \conv\{\hat y^*_i,b_1,b_2\}$ be one of the search
triangles, with $b_1,b_2$ being points of the boundary of
$[-B,B]^2$.
The triangle $T$ might contain $\hat y^*_{i+1}$, say $\hat
y^*_{i+1}\in\conv\{\hat y^*_i,b_1\}$, a point we need to
exclude from $T$.
We modify $b_1$ slightly by replacing it with
$(1-\varepsilon) b_1 + \varepsilon b_2$ for an appropriate
positive $\varepsilon>0$ whose encoding length is $\OO(\ln(B))$. 

So, we apply Algorithm~\ref{algo:2d} with all these modified
search triangles instead of
$\conv\{x,F_1\},\ldots,\conv\{x,F_4\}$. A simple
modification of Line~\ref{algo:2d:ip} allows us to avoid the
point $\hat y^*_i$ for $\hat z$: we just need to replace the
linear integer problem $(\mathcal{P})$ with  $\min\{\langle
h,\hat y\rangle: \hat y\in\bar T_k\cap\Z^2,\; \langle h,\hat
y\rangle\ge\langle h, \hat y^*_i \rangle+1\}$,
where $h \in \Z^2$ such that $\gcd(h_1,h_2) = 1$.
Then, among the feasible integer points found, we return the
point with smallest objective value.

\begin{corollary}\label{cor:kth_point}
Let $f : \R^2 \to \R$ and $g_i : \R^2 \to \R$ with
$i=1,\dots,m$ be convex functions.
Let $\hat x_1^*,\dots,\hat x_{k-1}^*$ be the $k-1$ best
points for $\min \{f(\hat x) :
\hat x \in S \cap [-B,B]^2 \cap \Z^2\}$. 
Then, in a number of evaluations of $f$ and $g_1,\dots,g_m$
that is polynomial in $\ln(B)$ and in $k$, one can either
find
\begin{enumerate}[(a)]
  \item a $k$-th best point, $\hat x_k^*$, or
  \item show that there is no such point.
\end{enumerate}\end{corollary}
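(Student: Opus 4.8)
The plan is to reduce the search for a $k$-th best point to a bounded number of calls of Algorithm~\ref{algo:2d}, run over a triangulation of $[-B,B]^2\setminus P_{k-1}$. First I would exploit the invariant $P_{k-1}\cap\Z^2=\Pi_{k-1}$, which says that the only integer points inside $P_{k-1}$ are the $k-1$ best points themselves; hence every feasible integer point other than $\hat x^*_1,\dots,\hat x^*_{k-1}$ lies in $[-B,B]^2\setminus P_{k-1}$, and by Lemma~\ref{lem:P_k cap Z^2 = S_k} at least one $k$-th best point $\hat z^*_k$ can be taken there as well. It therefore suffices to minimize $f$ over the feasible integer points of this punctured box.

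Next I would make that region amenable to Algorithm~\ref{algo:2d}. Using the Graham scan I would compute the $\nu\le k-1$ vertices $\hat y^*_0,\dots,\hat y^*_{\nu-1}$ of $P_{k-1}$ in $\OO(k\ln k)$ operations, form the regions $R_i$, and triangulate the at most four non-triangular ones, obtaining $\OO(\nu)=\OO(k)$ search triangles whose union is $[-B,B]^2\setminus P_{k-1}$. On each search triangle I would apply Algorithm~\ref{algo:2d} (whose output is justified by Corollary~\ref{2dim::optimization}), taking two precautions: perturb any offending boundary vertex by an $\varepsilon$ of encoding length $\OO(\ln B)$ so that no search triangle contains an integer point of $P_{k-1}$ besides its apex $\hat y^*_i$, and replace $(\mathcal P)$ in Line~\ref{algo:2d:ip} by $\min\{\langle h,\hat y\rangle:\hat y\in\bar T_k\cap\Z^2,\ \langle h,\hat y\rangle\ge\langle h,\hat y^*_i\rangle+1\}$ with $h\in\Z^2$ primitive, which forbids $\hat y^*_i$ itself as output.

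Each such call costs a number of $f$- and $g_i$-evaluations that is polynomial in $\ln B$ and returns the feasible integer point of least $f$-value inside its triangle, or reports infeasibility. Taking the best point over all $\OO(k)$ triangles yields a $k$-th best point; if every call is infeasible, no such point exists, which is conclusion~(b). Adding the $\OO(k\ln k)$ Graham-scan cost and the $\OO(\nu\ln B)$ cost of Lemma~\ref{lem:P_k cap Z^2 = S_k}, the total number of evaluations is polynomial in $\ln B$ and in $k$.

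The main obstacle is to verify that the triangulation together with the vertex perturbation and the $\langle h,\cdot\rangle\ge\langle h,\hat y^*_i\rangle+1$ constraint is at once exhaustive and exclusive: every feasible integer point outside $P_{k-1}$ must survive in some perturbed search triangle, while all of $\hat x^*_1,\dots,\hat x^*_{k-1}$ must be removed from consideration. Showing that a single $\varepsilon$ of bit-length $\OO(\ln B)$ simultaneously clips away precisely the unwanted lattice points of $P_{k-1}$ without discarding any genuine candidate — using the primitivity of $h$ and the invariant $P_{k-1}\cap\Z^2=\Pi_{k-1}$ — is the delicate point.
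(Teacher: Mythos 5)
Your proposal is correct and follows essentially the same route as the paper: reduce to $[-B,B]^2\setminus P_{k-1}$ via the invariant and Lemma~\ref{lem:P_k cap Z^2 = S_k}, Graham-scan the vertices of $P_{k-1}$, decompose into $\OO(k)$ search triangles, perturb them and modify the integer program in Line~\ref{algo:2d:ip} to exclude $\hat y^*_i$, then run Algorithm~\ref{algo:2d} on each triangle and return the best point found. The ``delicate point'' you flag about the perturbation being simultaneously exhaustive and exclusive is likewise left at the level of a construction sketch in the paper, so you are not missing anything the paper supplies.
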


\section{Extensions and applications to the general setting} \label{sec.2D+d}

In this section, we extend our algorithm for solving
two-dimensional integer convex optimization problems in
order to solve more general mixed-integer convex problems.
The first extension concerns mixed-integer
convex problems with two integer variables and
$d$ continuous variables.
For those, we first need results about problems
with only one integer variable. We derive these results in
Subsection~\ref{sec:MIP1D} where we propose a variant of
the well-known golden search method that deals with convex
functions whose value is only known approximately.
To the best of our knowledge, this variant is new.

In Subsection~\ref{sec:MIP2D}, we build an
efficient method for solving mixed-integer convex problems
with two integer and $d$ continuous variables
and propose an extension of Corollary~\ref{cor:kth_point}.
This result itself will be used as a subroutine to design a
finite-time algorithm for mixed-integer convex
problems in $n$ integer and $d$ continuous
variables in Subsection~\ref{sec:MIPgen}.

In this section, the problem of interest is \eqref{eq:general_MIP_f}:
\[
\min\{f(\hat x,y):g_i(\hat x,y)\leq 0 \textrm{ for } 1\leq i\leq m, (\hat x,y)\in\Z^n\times\R^d\}
\] 
with a few mild simplifying assumptions. 
We define the function
$$ g: \R^n \to \R , \quad x \mapsto g(x) :=
   \min_{y\in\R^d} \max_{1\leq i\leq m} g_i(x,y).$$
We assume that this minimization in $y$ has a solution for
every $x\in\R^n$, so as to make the function $g$ convex.
Let $S:=\{(x,y)\in\R^{n+d}:g_i(x,y)\leq 0 \textrm{ for }
1\leq i\leq m\}$. We assume that the function $f$ has a
finite spread $\max\{f(x,y)-f(x',y'):(x,y),(x',y')\in S\}$
on $S$ and that we know an upper bound $V_f$ on that
spread.
Observe that, by Lipschitz continuity of $f$ and the
assumption that we optimize over $[-B,B]^n$, it follows
$V_f \le 2 \sqrt{n} B L$.
Finally, we assume that the partial minimization
function:
$$ \phi:\R^n \to \R \cup \{+\infty\}, \quad x \mapsto
   \phi(x) := \min\{f(x,y):(x,y)\in S\} $$
is convex.
As for the function $g$, this property can be achieved
e.g.~if for every $x\in\R^n$ for which $g(x)\leq 0$ there
exists a point $y$ such that $(x,y)\in S$ and
$\phi(x) = f(x,y)$. 

Our approach is based on the following well-known identity:
\[
\min\{f(\hat x,y):(\hat x,y)\in S\cap(\Z^n\times\R^d)\} =
\min\{\phi(\hat x): g(\hat x)\leq 0,\ \hat x\in\Z^n\}.
\]
For instance, when $n=2$, we can use the techniques developed in the previous section on $\phi$ to implement the improvement oracle for $f$. However, we cannot presume to know exactly the value of $\phi$, as it results from a minimization problem. We merely assume that, for a known accuracy $\gamma>0$ and for every $x\in\dom\phi$ we can determine a point $y_x$ such that $(x,y_x)\in S$ and 
$f(x,y_x)-\gamma\leq \phi(x)\leq f(x,y_x)$.
Determining $y_x$ can be, on its own, a non-trivial
optimization problem. Nevertheless, it is a convex problem
for which we can use the whole machinery of standard Convex
Programming (see e.g. \cite{Nesterov_Nemirovski_book,Toint_book,Nesterov-Book04} and references therein.).

Since we do not have access to exact values of $\phi$, we cannot hope for an exact oracle for the function $\phi$, let alone for $f$. The impact of the accuracy $\gamma$ on the accuracy of the oracle is analyzed in the next subsections.

\subsection{Mixed-integer convex problems with one integer variable}\label{sec:MIP1D}

The Algorithm~\ref{algo:2d} uses as indispensable tools the
bisection method for solving two types of problems:
minimizing a convex function over the integers
of an interval, and finding, in a given interval,
a point that belongs to a level set of a convex function.
In this subsection, we show how to adapt the bisection
methods for mixed-integer problems.
It is well-known that the bisection method is the fastest
for minimizing univariate convex functions over a finite
segment (\cite[Chapter 1]{Nemirovski_Course_Convex}).

Let $a,b\in\R$, $a<b$, and $\varphi:[a,b]\to\R$ be a convex function to minimize on $[a,b]$ and/or on the integers of $[a,b]$, such as the function $\phi$ in the preamble of this Section \ref{sec.2D+d} when $n=1$. Assume that, for every $t\in [a,b]$, we know a number $\tilde \varphi(t)\in[\varphi(t),\varphi(t)+\gamma]$.
In order to simplify the notation, we scale the problem so that $[a,b] \equiv [0,1]$. The integers of $\aff\{a,b\}$ are scaled to a set of points of the form $t_0+\tau\Z$ for a $\tau>0$. Of course, the spread of the function $\varphi$ does not change, but its Lipschitz constant does, and achieving the accuracy $\gamma$ in its evaluation must be reinterpreted accordingly.

%
%
%
In the sequel of this section, we fix
$0 \le \lambda_0 < \lambda_1 \le 1$.

\begin{lemma}\label{lem:bissection1}
Under our assumptions, the following statements hold.
\begin{enumerate}[(a)]
  \item If $\tilde \varphi(\lambda_0)\leq \tilde
    \varphi(\lambda_1)-\gamma$, then $\varphi(\lambda)\geq
    \tilde \varphi(\lambda_0)$ for all $\lambda \in
    [\lambda_1,1]$.
  \item If $\tilde \varphi(\lambda_0)\geq \tilde
    \varphi(\lambda_1)+\gamma$, then $\varphi(\lambda)\geq
    \tilde \varphi(\lambda_1)$ for all $\lambda \in [0,\lambda_0]$.
\end{enumerate}
\end{lemma}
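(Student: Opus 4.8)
The plan is to reduce both statements to a single monotonicity fact about convex functions, combined with the two-sided estimate $\varphi(t)\le\tilde\varphi(t)\le\varphi(t)+\gamma$ that holds for every $t$ by assumption. The key observation is that the hypotheses are engineered precisely so that, after cashing in the error $\gamma$, one obtains an honest comparison between the \emph{true} values $\varphi(\lambda_0)$ and $\varphi(\lambda_1)$; once that comparison is in hand, convexity propagates it along the relevant half-interval.

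For part (a), I would first turn the hypothesis $\tilde\varphi(\lambda_0)\le\tilde\varphi(\lambda_1)-\gamma$ into the exact inequality $\varphi(\lambda_0)\le\varphi(\lambda_1)$: indeed $\varphi(\lambda_0)\le\tilde\varphi(\lambda_0)\le\tilde\varphi(\lambda_1)-\gamma\le\varphi(\lambda_1)$, where the first and last steps use the two-sided estimate. Since $\lambda_0<\lambda_1$ and $\varphi$ is convex, its difference quotients are nondecreasing, so $\varphi(\lambda_0)\le\varphi(\lambda_1)$ forces $\varphi(\lambda)\ge\varphi(\lambda_1)$ for every $\lambda\ge\lambda_1$ in $[0,1]$; this is the one genuine use of convexity. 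Finally I would chain $\varphi(\lambda)\ge\varphi(\lambda_1)\ge\tilde\varphi(\lambda_1)-\gamma\ge\tilde\varphi(\lambda_0)$, the last inequality being exactly the hypothesis, to conclude $\varphi(\lambda)\ge\tilde\varphi(\lambda_0)$ on $[\lambda_1,1]$.

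Part (b) is the mirror image. From $\tilde\varphi(\lambda_0)\ge\tilde\varphi(\lambda_1)+\gamma$ I would deduce $\varphi(\lambda_1)\le\varphi(\lambda_0)$ via $\varphi(\lambda_1)\le\tilde\varphi(\lambda_1)\le\tilde\varphi(\lambda_0)-\gamma\le\varphi(\lambda_0)$. Because $\varphi$ is convex and $\lambda_0<\lambda_1$, a value at $\lambda_1$ no larger than the value at $\lambda_0$ forces $\varphi(\lambda)\ge\varphi(\lambda_0)$ for every $\lambda\le\lambda_0$. Chaining $\varphi(\lambda)\ge\varphi(\lambda_0)\ge\tilde\varphi(\lambda_0)-\gamma\ge\tilde\varphi(\lambda_1)$ then yields the claim on $[0,\lambda_0]$.

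There is no real obstacle here; the only point needing a line of justification rather than pure bookkeeping is the convex-monotonicity step, which I would state once in the clean form ``for convex $\varphi$ and $a<b$, $\varphi(a)\le\varphi(b)$ implies $\varphi(\lambda)\ge\varphi(b)$ for all $\lambda\ge b$, and $\varphi(b)\le\varphi(a)$ implies $\varphi(\lambda)\ge\varphi(a)$ for all $\lambda\le a$'' (both being immediate from monotonicity of difference quotients) and invoke it once per part. Everything else is a careful chase through the two-sided estimate, so the main thing to watch is simply not to drop or misplace the factor $\gamma$ when passing between $\tilde\varphi$ and $\varphi$.
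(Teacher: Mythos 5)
Your proof is correct and uses essentially the same argument as the paper: convexity of $\varphi$ along the triple $\lambda_0<\lambda_1<\lambda$ combined with the bracketing $\varphi\le\tilde\varphi\le\varphi+\gamma$. The only cosmetic difference is that you first extract the exact comparison $\varphi(\lambda_0)\le\varphi(\lambda_1)$ and then invoke monotonicity of difference quotients, whereas the paper writes $\lambda_1=\mu\lambda+(1-\mu)\lambda_0$ and runs the whole estimate as a single chain; the content is identical.
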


\begin{proof}
\smartqed
We only prove Part (a) as the proof of Part (b) is
symmetric. Thus, let us assume that
$\tilde \varphi(\lambda_0)\leq \tilde \varphi(\lambda_1)-\gamma$.
Then there exists $0<\mu\leq 1$ for which $\lambda_1 = \mu \lambda + (1-\mu)\lambda_0$. Convexity of $\varphi$ allows us to write:
$$ \tilde \varphi(\lambda_0)  \leq\tilde
\varphi(\lambda_1)-\gamma\leq \varphi(\lambda_1) \leq
\mu\varphi(\lambda) + (1-\mu)\varphi(\lambda_0) \leq \mu
\varphi(\lambda) + (1-\mu)\tilde \varphi(\lambda_0), $$
implying $\tilde \varphi(\lambda_0)\leq \varphi(\lambda)$ as
$\mu>0$.
Fig.~\ref{fig:symmetric} illustrates the proof graphically.
\qed
\end{proof}

\begin{figure}[ht]
\centering
\begin{tikzpicture}[scale=7]
\tikzstyle{every node}=[circle, fill=black, draw,inner sep=0pt,
minimum width=2pt];
\node (a) [label=above:$0$,draw]{}
node (b) at (1,0)[label=above:$1$,draw]{}
node (L1) at (0.618033988,0)[label=above:$\lambda_1$,draw]{}
node (L0) at (0.381966011,0)[label=above:$\lambda_0$,draw]{}
node (L0g) at (0.381966011,-0.15)[draw] {}
node (L1g) at (0.618033988,-0.07)[draw] {}
node (L1f) at (0.618033988,-0.12)[draw] {}
node (low1) at (1,-0.07145898) [draw] {};
\draw  {(a) -- (b)};
\draw  {(L1g) -- (L1f)};
\draw [gray] {(L0g) -- (L1f)};
\draw [thick] {(L1f) -- (low1)};
\draw [dotted,thin] {(L0g) -- (0,-0.15)};
\draw [dashed,thin] {(1,-0.15) -- (L0g)};
\draw [dotted,thin] {(L1g) -- (0,-0.07)};
\draw (0,-0.15) node[draw=none,fill=none,label=left:$\tilde \varphi(\lambda_0)$] {};
\draw (0,-0.07) node[draw=none,fill=none,label=left:$\tilde \varphi(\lambda_1)$] {};
\draw [<->] {(0.15,-0.12) -- (0.15,-0.07)};
\draw [dotted,thin] {(L1f) -- (0.15,-0.12)};
\draw [dotted,thin] {(L1) -- (L1f)};
\draw [dotted,thin] {(L0) -- (L0g)};
\draw (0.15,-0.095) node[draw=none,fill=none,label=left:$\gamma$] {};
\end{tikzpicture}
\caption{Lemma \ref{lem:bissection1}: the bold line
  represents a lower bound on $\varphi$ in Part (a).}
\label{fig:symmetric}
\end{figure}
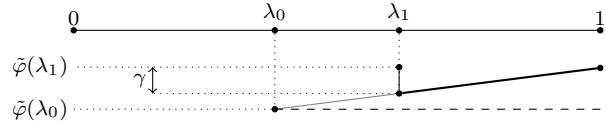

If one of the conditions in Lemma~\ref{lem:bissection1} is
satisfied, we can remove from the interval $[0,1]$ either
$[0,\lambda_0[$ or $]\lambda_1,1]$.
To have a symmetric effect of the algorithm in either case,
we set $\lambda_1 := 1-\lambda_0$, forcing $\lambda_0$ to be
smaller than $\frac{1}{2}$.
In order to recycle our work from iteration to iteration, we
choose $\lambda_1:=\frac{1}{2}(\sqrt{5}-1)$, as in the
golden search method:
if we can eliminate, say, the interval $]\lambda_1,1]$ from
$[0,1]$, we will have to compute in the next
iteration step an approximate value of the objective
function at $\lambda_0 \lambda_1$ and $\lambda_1^2$.
The latter happens to equal $\lambda_0$ when
$\lambda_1 = \frac{1}{2}(\sqrt{5}-1)$.

It remains to define a strategy when neither of the
conditions in Lemma~\ref{lem:bissection1} is
satisfied.
In the lemma below, we use the values for
$\lambda_0,\lambda_1$ chosen above.

\begin{lemma}\label{lem:bissectionterm}
Assume that
$\tilde \varphi(\lambda_1)-\gamma<\tilde \varphi(\lambda_0)< \tilde\varphi(\lambda_1)+\gamma$.
We define:
\begin{align*}
  \lambda_{0+} & := (1-\lambda_0) \cdot \lambda_0+\lambda_0
  \cdot \lambda_1 = 2\lambda_0 \lambda_1, \\
  \lambda_{1+} & := (1-\lambda_1) \cdot \lambda_0+\lambda_1
  \cdot \lambda_1 = 1-2\lambda_0 \lambda_1.
\end{align*}
If $\min\{\tilde \varphi(\lambda_{0+}),\tilde
\varphi(\lambda_{1+})\} \leq \min\{\tilde \varphi(\lambda_{0})-\gamma,\tilde \varphi(\lambda_{1})-\gamma\}$,
then $\varphi(t)\geq \min\{\tilde\varphi(\lambda_{0+}),$
$\tilde\varphi(\lambda_{1+})\}$ for all $t\in[0,1]\setminus [\lambda_0,\lambda_1]$.
Otherwise, it holds that $\min\{\tilde \varphi(\lambda_{0}),\tilde \varphi(\lambda_1)\}\leq \min\{\varphi(t):t\in[0,1]\}+(\kappa-1)\gamma$,
where $\kappa := \frac{2}{\lambda_0}\approx 5.236$.
\end{lemma}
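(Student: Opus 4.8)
The plan is to prove the two alternatives separately, reducing everything to the convexity of $\varphi$ together with the sandwich $\varphi\le\tilde\varphi\le\varphi+\gamma$, and to invoke Lemma~\ref{lem:bissection1} — whose proof uses only convexity and two sample points, hence applies verbatim to \emph{any} pair of points of $[0,1]$, not just the fixed $\lambda_0,\lambda_1$.

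\textbf{First alternative.} Suppose $\min\{\tilde\varphi(\lambda_{0+}),\tilde\varphi(\lambda_{1+})\}\le\min\{\tilde\varphi(\lambda_0)-\gamma,\tilde\varphi(\lambda_1)-\gamma\}$, and let $p\in\{\lambda_{0+},\lambda_{1+}\}$ attain $m:=\min\{\tilde\varphi(\lambda_{0+}),\tilde\varphi(\lambda_{1+})\}$; note $\lambda_0<p<\lambda_1$. The hypothesis gives simultaneously $\tilde\varphi(p)=m\le\tilde\varphi(\lambda_1)-\gamma$ and $\tilde\varphi(\lambda_0)\ge\tilde\varphi(p)+\gamma$. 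Applying Lemma~\ref{lem:bissection1}(a) to the pair $(p,\lambda_1)$ gives $\varphi(\lambda)\ge m$ for $\lambda\in[\lambda_1,1]$, and applying Lemma~\ref{lem:bissection1}(b) to the pair $(\lambda_0,p)$ gives $\varphi(\lambda)\ge m$ for $\lambda\in[0,\lambda_0]$. Since $[0,1]\setminus[\lambda_0,\lambda_1]\subseteq[0,\lambda_0]\cup[\lambda_1,1]$, this is exactly the assertion.

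\textbf{Second alternative.} Here I would first convert the hypotheses into pointwise bounds. Writing $m_{\mathrm{out}}:=\min\{\tilde\varphi(\lambda_0),\tilde\varphi(\lambda_1)\}$, the standing assumption $|\tilde\varphi(\lambda_0)-\tilde\varphi(\lambda_1)|<\gamma$ gives $m_{\mathrm{out}}-\gamma\le\varphi(\lambda_0),\varphi(\lambda_1)<m_{\mathrm{out}}+\gamma$, while the negation of the first alternative gives $\varphi(\lambda_{0+}),\varphi(\lambda_{1+})>m_{\mathrm{out}}-2\gamma$. The goal becomes the lower bound $\varphi(t^\ast)\ge m_{\mathrm{out}}-(\kappa-1)\gamma$ at the minimizer $t^\ast$ of $\varphi$ on $[0,1]$. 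By the symmetry $t\mapsto1-t$ (which swaps $\lambda_0\leftrightarrow\lambda_1$ and $\lambda_{0+}\leftrightarrow\lambda_{1+}$ and preserves all hypotheses) I may assume $t^\ast\le\tfrac12$, whence $t^\ast<\lambda_{1+}$. In each of the cases $t^\ast\le\lambda_0$, $\lambda_0<t^\ast\le\lambda_{0+}$, $\lambda_{0+}<t^\ast\le\tfrac12$, I would bound $\varphi(t^\ast)$ below by extrapolating, to the left of $t^\ast$, the secant of $\varphi$ through the two sample points lying immediately to the right of $t^\ast$ — namely $(\lambda_0,\lambda_1)$, $(\lambda_{0+},\lambda_{1+})$, $(\lambda_{1+},\lambda_1)$ respectively — convexity guaranteeing that $\varphi$ lies above this extended secant. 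In the first case the secant slope is at most $2\gamma/(\lambda_1-\lambda_0)$ and the horizontal offset at most $\lambda_0$, giving $\varphi(t^\ast)\ge m_{\mathrm{out}}-\gamma-\tfrac{2\lambda_0}{\lambda_1-\lambda_0}\gamma=m_{\mathrm{out}}-(\kappa-1)\gamma$, since $\tfrac{2\lambda_0}{\lambda_1-\lambda_0}=1+\sqrt5$ and $\kappa-1=2+\sqrt5$. In the two interior cases the slope is controlled by the $\le3\gamma$ spread between an inner value and the outer value, and the offset by the corresponding golden gap; the identities $\lambda_1^2=\lambda_0$, $\lambda_{0+}=2\lambda_0\lambda_1$, $\lambda_{1+}=1-2\lambda_0\lambda_1$ collapse the relevant ratios to powers of $\lambda_1=\tfrac12(\sqrt5-1)$, yielding bounds strictly below $(\kappa-1)\gamma$.

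\textbf{Main obstacle.} The delicate point is the interior cases of the second alternative. Extrapolating the outermost secant $(\lambda_{1+},\lambda_1)$ all the way from a minimizer $t^\ast\approx\lambda_0$ is far too lossy (it produces a constant near $6.8$), so the secant must be chosen case by case through the two samples closest to $t^\ast$ on its right, its slope then bounded by convex comparison with the far point $\lambda_1$. Verifying that every case stays under the \emph{single} constant $\kappa-1$ — the binding case being $t^\ast\le\lambda_0$, which needs only the outer pair and the standing hypothesis — is the crux; and it is precisely the negated condition (the inner samples not far below the outer ones) that forbids a sharp interior dip of $\varphi$ and thereby makes the cases $t^\ast\in(\lambda_0,\tfrac12]$ go through.
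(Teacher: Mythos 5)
Your proof is correct and follows essentially the same route as the paper: the first alternative is reduced to Lemma~\ref{lem:bissection1} applied to sub-pairs of sample points, and the second is handled by a case analysis on subintervals, extrapolating a secant through the two nearest samples (with slope controlled via a farther sample) --- exactly the paper's generic inequality \eqref{eq:generic_inequality} --- with the binding case $t^\ast\le\lambda_0$ giving precisely the constant $\kappa-1=2+\sqrt5=2/\lambda_0-1$. The only real difference is your use of the reflection $t\mapsto 1-t$ to cut the paper's six cases down to three, a harmless (and slightly tidier) reorganization.
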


\begin{proof}
\smartqed
The first conclusion follows immediately from
Lemma~\ref{lem:bissection1}.
The second situation involves a tedious enumeration,
summarized in Fig.~\ref{fig:figure5}.
We assume, without loss of generality, that $\tilde
\varphi(\lambda_{0})\leq\tilde\varphi(\lambda_{1})$.
The bold lines in Fig.~\ref{fig:figure5} represent a lower bound on the value of the function $\varphi$. We show below how this lower bound is constructed and determine its lowest point. In fact, this lower bound results from six applications of a simple generic inequality \eqref{eq:generic_inequality} that we establish below, before showing how we can particularize it to different segments of the interval $[0,1]$.

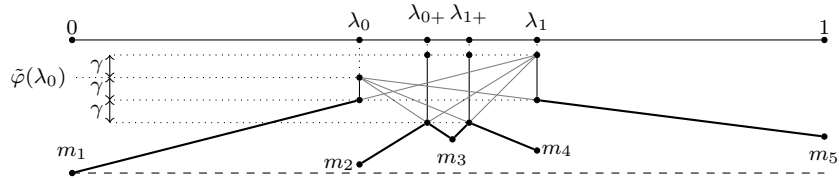
\begin{figure}[ht]
\centering
\begin{tikzpicture}[scale=10]
\tikzstyle{every node}=[circle, fill=black, draw,inner sep=0pt,minimum width=2pt];
\node (a) [label=above:$0$,draw]{}
node (b) at (1,0)[label=above:$1$,draw]{}
node (L1) at (0.618033988,0)[label=above:$\lambda_1$,draw]{}
node (L0) at (0.381966011,0)[label=above:$\lambda_0$,draw]{}
node (L1plus) at (0.527864045,0)[label=above:$\lambda_{1+}$,draw]{}
node (L0plus) at (0.472135955,0)[label=above:$\lambda_{0+}$,draw]{}
node (L0g) at (0.381966011,-0.05)[draw] {}
node (L0f) at (0.381966011,-0.08)[draw] {}
node (L1g) at (0.618033988,-0.02)[draw] {}
node (L1f) at (0.618033988,-0.08)[draw] {}
node (L0plusg) at (0.472135955,-0.02)[draw] {}
node (L0plusf) at (0.472135955,-0.11)[draw] {}
node (L1plusg) at (0.527864045,-0.02)[draw] {}
node (L1plusf) at (0.527864045,-0.11)[draw] {}
node (low1) at (0,-0.17708204) [label=above:$m_1$,draw] {}
node (low2) at (0.381966011,-0.165623059) [label=left:$m_2$,draw] {}
node (low3) at (0.505572809,-0.132249224) [label=below:$m_3$,draw] {}
node (low4) at (0.618033988,-0.147082039) [label=right:$m_4$,draw] {}
node (low5) at (1,-0.12854102) [label=below:$m_5$,draw] {};
\draw  {(L0g) -- (L0f)};
\draw  {(L1g) -- (L1f)};
\draw  {(L0plusg) -- (L0plusf)};
\draw  {(L1plusg) -- (L1plusf)};
\draw [gray] {(L1g) -- (L0f)};
\draw [thick] {(L0f) -- (low1)};
\draw [gray] {(L1g) -- (L0plusf)};
\draw [thick] {(L0plusf) -- (low2)};
\draw [gray] {(L0g) -- (L0plusf)};
\draw [thick] {(L0plusf) -- (low3)};
\draw [gray] {(L1g) -- (L1plusf)};
\draw [thick] {(L1plusf) -- (low3)};
\draw [gray] {(L0g) -- (L1plusf)};
\draw [thick] {(L1plusf) -- (low4)};
\draw [gray] {(L0g) -- (L1f)};
\draw [thick] {(L1f) -- (low5)};
\draw  {(a) -- (b)};
\draw [dotted,thin] {(L0g) -- (0,-0.05)};
\draw [<->] {(0.05,-0.05) -- (0.05,-0.08)};
\draw [<->] {(0.05,-0.08) -- (0.05,-0.11)};
\draw [<->] {(0.05,-0.05) -- (0.05,-0.02)};
\draw (0,-0.05) node[draw=none,fill=none,label=left:$\tilde \varphi(\lambda_0)$] {};
\draw (0.05,-0.065) node[draw=none,fill=none,label=left:$\gamma$] {};
\draw (0.05,-0.095) node[draw=none,fill=none,label=left:$\gamma$] {};
\draw (0.05,-0.035) node[draw=none,fill=none,label=left:$\gamma$] {};
 \draw [dotted,thin] {(L1g) -- (0.05,-0.02)};
 \draw [dotted,thin] {(L1f) -- (0.05,-0.08)};
 \draw [dotted,thin] {(L1plusf) -- (0.05,-0.11)};
 \draw [dashed,thin] {(0,-0.17708204) -- (1,-0.17708204)};
 \draw [dotted,thin] {(L1g) -- (L1)};
 \draw [dotted,thin] {(L1plusg) -- (L1plus)};
 \draw [dotted,thin] {(L0g) -- (L0)};
 \draw [dotted,thin] {(L0plusg) -- (L0plus)};
\end{tikzpicture}
\caption{Approximate bisection: bold lines represent a lower bound on $\varphi$ in the termination case.} \label{fig:figure5}
\end{figure}

Let $0<t<1$ and let $u,v\in\{\lambda_0,\lambda_{0+},\lambda_{1+},\lambda_1\}$. Suppose that we can write 
$v=\mu t+(1-\mu) u$ for a $\mu\in\ ]\mu_0,1]$ with
$\mu_0>0$.
If we can find constants $\gamma_-, \gamma_+ \ge 0$ that
satisfy
\[\varphi(v)+\gamma_+\geq\tilde \varphi(\lambda_0)\geq
\varphi(u)-\gamma_-\] then we can infer:
\[
\mu \varphi(t) + (1-\mu)(\tilde \varphi(\lambda_0)+\gamma_-)
\geq \mu \varphi(t) + (1-\mu)\varphi(u)
\geq \varphi(v)
\geq \tilde \varphi(\lambda_0)-\gamma_+,
\]
and thus:
\begin{equation}\label{eq:generic_inequality}
 \varphi(t) -\tilde \varphi(\lambda_0) \geq \gamma_- -\frac{\gamma_++\gamma_-}{\mu}\geq
\gamma_--\frac{\gamma_++\gamma_-}{\mu_0}.
\end{equation}
\begin{enumerate}
\item If $t\in\ ]0,\lambda_0]$, we can take $u:=\lambda_1$ and $v:=\lambda_0$, giving $\mu_0 = 1-\frac{\lambda_0}{\lambda_1} = \lambda_0$. Then $\gamma_- = \gamma_+ = \gamma$, and $\varphi(t) -\tilde \varphi(\lambda_0) \geq-\gamma(\frac{2}{\lambda_0}-1)$.
\item If $t\in\ ]\lambda_1,1[$, we choose $u:=\lambda_0$ and
  $v:=\lambda_1$, and by symmetry with the previous case we
  obtain $\mu_0 = \lambda_0$. Now, $\gamma_- = 0$ and $\gamma_+ = \gamma$, yielding a higher bound than in the previous case.
\item Suppose $t\in\ ]\lambda_0,\lambda_{0+}]$. Then with $u:=\lambda_1$ and $v:=\lambda_{0+}$, we get $\mu_0=\frac{\lambda_1-\lambda_{0+}}{\lambda_1-\lambda_0} = \lambda_1$, $\gamma_-=\gamma$, $\gamma_+=2\gamma$, giving as lower bound $-\gamma(\frac{3}{\lambda_1}-1)$, which is higher than the first one we have obtained.
\item Symmetrically, let us consider $t\in\ ]\lambda_{1+},\lambda_1]$. With $u:=\lambda_0$ and $v:=\lambda_{1+}$, we obtain also $\mu_0 = \lambda_1$. As $\gamma_- = 0$ and $\gamma_+=2\gamma$, the lower bound we get is larger than the one in the previous item.
\item Set $\lambda':=\frac{1}{5}(2\lambda_{0+}+3\lambda_{1+})$. If $t\in\ ]\lambda_{0+},\lambda']$, we can use $u:=\lambda_0$ and $v:=\lambda_{0+}$, so that $\mu_0 = \frac{\lambda_{0+}-\lambda_0}{\lambda'-\lambda_0} = 5\lambda_0^2$, $\gamma_-=0$, and $\gamma_+=2\gamma$. Thus, the lower bound is evaluated as $-\frac{2\gamma}{5\lambda_0^2}$, which is higher than any of the bounds we have obtained so far.
\item Finally, if $t\in\ ]\lambda',\lambda_{1+}]$, we take $u:=\lambda_1$ and $v:=\lambda_{1+}$, so that $\gamma_-=\gamma$,  $\gamma_+=2\gamma$, and $\mu_0 = \frac{\lambda_1-\lambda_{1+}}{\lambda_1-\lambda'} = \frac{5\lambda_0}{2+\lambda_0}$. Hence, we get $-\gamma(\frac{3(2+\lambda_0)}{5\lambda_0}-1) = -\frac{2\gamma}{5\lambda_0^2}$ for the lower bound, just as in the previous item.
\end{enumerate}
So, the lower bound for $\varphi(t)-\tilde\varphi(\lambda_0)$ on $[0,1]$ can be estimated as $-\gamma(\frac{2}{\lambda_0}-1)\approx -4.236\gamma$.
\qed
\end{proof}

In the proof of the following proposition, we
present an algorithm that returns a point $x \in [0,1]$
whose function value $\varphi(x)$ is close to
$\min\{\varphi(t):t\in[0,1]\}$.

\begin{proposition}\label{prop:bisection_continuous}
There exists an algorithm that finds a point $x\in[0,1]$ for
which $\tilde\varphi(x)-(\kappa-1)
\gamma\leq\min\{\varphi(t):t\in [0,1]\}\leq
\varphi(x)$ in
at most $2+\left\lceil\ln\left(\frac{(\kappa-1)\gamma}{V_\varphi}\right)/\ln(\lambda_1)\right\rceil$ evaluations of $\tilde \varphi$, where $V_\varphi$ is the spread of $\varphi$ on $[0,1]$.
\end{proposition}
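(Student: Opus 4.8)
The plan is to realize this algorithm as an \emph{approximate golden section search} assembled from Lemmas~\ref{lem:bissection1} and~\ref{lem:bissectionterm}, terminated either by the flat-case test of Lemma~\ref{lem:bissectionterm} or by an explicit cap on the number of evaluations. I would maintain a current interval $[a,b]\subseteq[0,1]$ together with the values $\tilde\varphi$ at its two golden points, sitting at relative positions $\lambda_0$ and $\lambda_1=1-\lambda_0$; the choice $\lambda_1=\frac12(\sqrt5-1)$, which gives $\lambda_1^2=\lambda_0$, guarantees that after a one-sided shrink one of the two interior points is recycled. At each step, if the two approximate values differ by at least $\gamma$ I invoke Lemma~\ref{lem:bissection1} to discard $]\lambda_1,1]$ or $[0,\lambda_0[$, shrinking the interval by the factor $\lambda_1$ at the cost of a single new evaluation. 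Otherwise we are in the flat regime of Lemma~\ref{lem:bissectionterm}: I spend two evaluations on $\lambda_{0+},\lambda_{1+}$ and either (case C1) discard everything outside $[\lambda_0,\lambda_1]$, shrinking by $\lambda_1^3$ while leaving both golden points of the successor interval already evaluated, or (case C2) stop. The algorithm also stops once the prescribed number of evaluations is reached, returning the evaluated point of least $\tilde\varphi$.

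For correctness I would first record the invariant that the current interval always contains a true minimizer, i.e.\ $\min_{[a,b]}\varphi=\min_{[0,1]}\varphi$. Indeed, Lemma~\ref{lem:bissection1}(a) discards only points where $\varphi\ge\tilde\varphi(\lambda_0)\ge\varphi(\lambda_0)$, and the first conclusion of Lemma~\ref{lem:bissectionterm} discards only points where $\varphi\ge\min\{\tilde\varphi(\lambda_{0+}),\tilde\varphi(\lambda_{1+})\}\ge\min_{[\lambda_0,\lambda_1]}\varphi$; in both cases a point that is kept attains a value no larger than anything thrown away. If the algorithm halts in case C2, the second conclusion of Lemma~\ref{lem:bissectionterm} yields $\min\{\tilde\varphi(\lambda_0),\tilde\varphi(\lambda_1)\}\le\min_{[0,1]}\varphi+(\kappa-1)\gamma$ directly, so the returned point meets the claim. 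If instead the cap is reached, the interval has become short enough that the spread of $\varphi$ over it, and hence the gap between the best golden value and $\min_{[0,1]}\varphi$, is driven below the tolerance needed for the $(\kappa-1)\gamma$ bound.

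The evaluation count is then a geometric accounting. Beyond the two evaluations that set up the first pair of golden points, every step of type A/B consumes one evaluation and shrinks the interval by $\lambda_1$, while every continuing flat step consumes two evaluations but shrinks by $\lambda_1^3=\lambda_1\cdot\lambda_0$ and hands over both golden points of the next interval for free. Hence the least efficient regime per evaluation is A/B, and after $m$ such steps the length is at most $\lambda_1^m$; taking $m=\lceil\ln((\kappa-1)\gamma/V_\varphi)/\ln(\lambda_1)\rceil$ forces the length below $(\kappa-1)\gamma/V_\varphi$, for a total of $2+m$ evaluations, which is exactly the stated bound.

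The main obstacle is the accuracy at cap-termination: bounding the spread of $\varphi$ over the shrunk interval by a constant multiple of $V_\varphi$ times its length. This estimate is \emph{false} for arbitrary subintervals of a convex function, since the minimizer may sit at the foot of a steep flank; what rescues it is that the golden section rule always retains the lower-valued side, so the kept interval never isolates such a flank, and the delicate point is to make this quantitative through convexity. A related subtlety, which is precisely why the explicit cap cannot be dispensed with in favour of case C2 alone, is that for a nearly flat $\varphi$ the $\gamma$-level noise in $\tilde\varphi$ can keep triggering Lemma~\ref{lem:bissection1} indefinitely, so that without the cap the search could shrink forever without ever entering the flat test.
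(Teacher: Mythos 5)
Your algorithm is the same approximate golden-section scheme as the paper's, and most of your analysis is sound: the invariant that the retained interval always contains a global minimizer and that every discarded point has $\varphi$-value at least the best recorded $\tilde\varphi$-value, the termination via the second conclusion of Lemma~\ref{lem:bissectionterm}, and the evaluation accounting all match the intended proof. The genuine gap is exactly the one you flag and then leave open: the accuracy guarantee when the algorithm stops because the evaluation budget is exhausted. You propose to bound the oscillation of $\varphi$ over the shrunk interval $I_k$ by a multiple of $V_\varphi\cdot|I_k|$, correctly observe that this is false for arbitrary subintervals of a convex function, and then only gesture at why the golden-section rule should rescue it. As written, that step is missing; no bound on the behaviour of $\varphi$ \emph{inside} $I_k$ is actually available, and none is needed.

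The paper closes this differently, and you should adopt its argument. Let $t^*$ be a minimizer and $\bar t_k$ the best evaluated point; by your own discarding invariant, $\varphi\ge\tilde\varphi(\bar t_k)$ everywhere \emph{outside} $I_k$. For any $\lambda$ with $1\ge\lambda>\lambda_1^k\ge|I_k|$, consider the scaled segment $I(\lambda):=(1-\lambda)t^*+\lambda[0,1]$, an interval of length $\lambda$ containing $t^*$; being longer than $I_k$, it contains a point $s\notin I_k$, whence $\tilde\varphi(\bar t_k)\le\varphi(s)\le\max\{\varphi(t):t\in I(\lambda)\}$. Convexity gives $\varphi((1-\lambda)t^*+\lambda u)\le(1-\lambda)\varphi(t^*)+\lambda(\varphi(t^*)+V_\varphi)$ for every $u\in[0,1]$, so $\tilde\varphi(\bar t_k)-\varphi(t^*)\le\lambda V_\varphi$, and letting $\lambda$ tend to $\lambda_1^k$ yields $\tilde\varphi(\bar t_k)-\varphi(t^*)\le\lambda_1^k V_\varphi$, which drops below $(\kappa-1)\gamma$ after the stated number of iterations. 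This estimate also resolves your closing worry about the $\gamma$-level noise triggering Lemma~\ref{lem:bissection1} indefinitely: the evaluation cap is not an ad hoc safeguard whose correctness remains to be justified, it is precisely the iteration count at which the above inequality certifies the required accuracy, whichever branch the algorithm happens to take.
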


\begin{proof}
\smartqed
We start with the interval $[0,1]$ and by evaluating
$\tilde \varphi$ at $\lambda_0$ and $\lambda_1$.
If one of the two conditions in Lemma~\ref{lem:bissection1}
is satisfied, we can shrink the interval by a factor of
$\lambda_0 \approx 38\%$ since it suffices to continue
either with the interval $[0,\lambda_1]$ or with
$[\lambda_0,1]$.
If not, then Lemma~\ref{lem:bissectionterm} applies:
if the first condition stated in
Lemma~\ref{lem:bissectionterm} is met, then it suffices to
continue with the interval $[\lambda_0,\lambda_1]$ so as to
shrink the starting interval by a factor of
$2\lambda_0\approx 76\%$.
Otherwise, any $x \in [\lambda_0,\lambda_1]$ satisfies the
requirement of the lemma and we can stop the algorithm.
Therefore, either the algorithm stops or we shrink the
starting interval by a factor of at least $\lambda_0$.
Iterating this procedure, it follows that --- if the
algorithm does not stop --- at every step the length of the
remaining interval is at most $\lambda_1$ times the length
of the previous interval.
Moreover, by the choice of $\lambda_0$, the function
$\tilde \varphi$ is
evaluated in two points at the first step, and in only one
point as from the second step in the algorithm.
So, at iteration $k$, we have performed at most $2+k$
evaluations of $\tilde \varphi$.

By construction, the minimum $t^*$ of $\varphi$ lies in the
remaining interval $I_k$ of iteration $k$.
Also, the value of $\varphi$ outside $I_k$ is higher than
the best value found so far, say $\tilde\varphi(\bar t_k)$.
Finally, the size of $I_k$ is bounded from above by
$\lambda_1^k$.
Consider now the segment $I(\lambda):=(1-\lambda)t^*+\lambda
[0,1]$, of size $\lambda$.
Observe that for every $\lambda$ such that
$1\geq\lambda>\lambda_1^k$, the interval $I(\lambda)$
contains a point that is not in $I_k$.
Therefore, 
\begin{eqnarray*}
\tilde\varphi(\bar t_k)  \leq  \max\{\varphi(t):t\in I(\lambda)\} &\leq& (1-\lambda)\varphi(t^*)+\lambda \max\{\varphi(t):t\in [0,1]\}\\
&\leq&
  (1-\lambda)\varphi(t^*)+\lambda (V_\varphi+\varphi(t^*)). 
\end{eqnarray*}
Hence $\tilde \varphi(\bar t_k) -
\varphi(t^\ast) \le \lambda V_\varphi$, and, by taking
$\lambda$ arbitrarily close to $\lambda_1^k$, we get
$\tilde\varphi(\bar t_k)-\varphi(t^*)\leq \lambda^k_1
V_\varphi$.
If the algorithm does not end prematurely, we need at most
$\left\lceil\ln\left(\frac{(\kappa-1)\gamma}{V_\varphi}\right)/\ln(\lambda_1)\right\rceil$
iterations to make
$\lambda^k_1 V_\varphi$ smaller than
$(\kappa-1)\gamma$.
\qed
\end{proof}

\begin{remark}
If we content ourselves with a coarser precision $\eta\geq(\kappa-1)\gamma$, we merely need $\OO(\ln(V_\varphi/\eta))$ evaluations of $\tilde\varphi$.$\hfill\diamond$
\end{remark}

It is now easy to extend this procedure to minimize a convex
function approximately over the integers of an interval
$[a,b]$, or, using our simplifying scaling, over
$(t_0+\tau\Z)\cap [0,1]$ for given $t_0\in\R$ and $\tau>0$.

\begin{proposition}\label{prop:bisection_integer}
There exists an algorithm that finds a point $\hat x\in(t_0+\tau\Z)\cap [0,1]$ for which:
\[
\tilde \varphi(\hat x)-\kappa\gamma\leq \min\{\varphi(\hat t):\hat t\in(t_0+\tau\Z)\cap [0,1]\}\leq \varphi(\hat x)
\]
in less than
\[
\min\left\{4+\left\lceil\frac{\ln((\kappa-1)\gamma/V_\varphi)}{\ln(\lambda_1)}\right\rceil,5+\left\lceil\frac{\ln(\tau)}{\ln(\lambda_1)}\right\rceil\right\}
\]
evaluations of $\tilde \varphi$, where $V_\varphi$ is the spread of $\varphi$ on $[0,1]$.
\end{proposition}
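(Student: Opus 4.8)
The plan is to run the approximate golden-search algorithm of Proposition~\ref{prop:bisection_continuous} essentially unchanged, but to equip it with a second stopping rule: halt as soon as the current search interval has length at most $\tau$ in the scaled coordinates where the whole segment is $[0,1]$ and the lattice is $t_0+\tau\Z$. The interval length is multiplied by at most $\lambda_1$ at each iteration, and the evaluation bookkeeping of Proposition~\ref{prop:bisection_continuous} (two evaluations of $\tilde\varphi$ at the first step, one per step afterwards) is retained. Upon halting I would evaluate $\tilde\varphi$ at the $\OO(1)$ lattice points contained in a $\tau$-neighbourhood of the final interval and return the lattice point $\hat x$ of smallest observed $\tilde\varphi$-value among all lattice points inspected.

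For the iteration count I would distinguish the two reasons for halting. The search reaches the continuous accuracy $(\kappa-1)\gamma$ --- either through the termination branch of Lemma~\ref{lem:bissectionterm} or, at the latest, once $\lambda_1^k V_\varphi\le(\kappa-1)\gamma$, i.e.\ after at most $2+\lceil\ln((\kappa-1)\gamma/V_\varphi)/\ln(\lambda_1)\rceil$ evaluations, exactly as in Proposition~\ref{prop:bisection_continuous}; it shrinks the interval below $\tau$ after at most $\lceil\ln(\tau)/\ln(\lambda_1)\rceil$ iterations, since $\lambda_1^k\le\tau$ forces this. As the algorithm stops at whichever of the two events occurs first and then spends only a constant number of extra evaluations on the final lattice points, the total is bounded by the minimum of $4+\lceil\ln((\kappa-1)\gamma/V_\varphi)/\ln(\lambda_1)\rceil$ and $5+\lceil\ln(\tau)/\ln(\lambda_1)\rceil$, as claimed.

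For correctness I would prove $\tilde\varphi(\hat x)-\kappa\gamma\le\min\{\varphi(\hat t):\hat t\in(t_0+\tau\Z)\cap[0,1]\}\le\varphi(\hat x)$ in the two cases. The right inequality is immediate since $\hat x$ is a lattice point. For the left one, I would first note that throughout the search Lemma~\ref{lem:bissection1} discards only pieces on which $\varphi$ provably exceeds an already-recorded value, so the continuous minimiser $t^*$ survives in the final interval $I$ and $\min_{t_0+\tau\Z}\varphi\ge\varphi(t^*)\ge\tilde\varphi(\bar t)-(\kappa-1)\gamma$, where $\bar t$ is the best continuous point seen. If we halted because $|I|\le\tau$, then by convexity the lattice minimiser is one of the at most two lattice points bracketing $t^*$, both of which lie in the $\tau$-enlargement of $I$ that we enumerate; the returned $\hat x$ then attains $\min_{t_0+\tau\Z}\varphi$ up to the single $\gamma$ coming from $\tilde\varphi\ge\varphi$, which is well inside $\kappa\gamma$. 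If instead we halted on the accuracy criterion, the termination hypothesis of Lemma~\ref{lem:bissectionterm} certifies that $\varphi$ is flat to within $\gamma$ on the central golden-section window $[\lambda_0,\lambda_1]$ of $I$; assuming (WLOG) $\tilde\varphi(\lambda_0)\le\tilde\varphi(\lambda_1)$, convexity then makes the sublevel set $\{\varphi\le\tilde\varphi(\lambda_0)\}$ cover that window, so a lattice point $\hat x$ with $\varphi(\hat x)\le\tilde\varphi(\lambda_0)$ exists in it, giving $\tilde\varphi(\hat x)-\kappa\gamma\le\varphi(\hat x)-(\kappa-1)\gamma\le\tilde\varphi(\lambda_0)-(\kappa-1)\gamma\le\min_{t_0+\tau\Z}\varphi$.

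The main obstacle is exactly this second case, and specifically the boundary regime where the golden-section termination fires while $|I|$ is only a constant multiple of $\tau$: there the central window may be too short to contain any lattice point, so I cannot simply read off a flat-region lattice point. I would close this gap by observing that such a short window forces $|I|=\OO(\tau)$, reducing the situation to the same constant-size lattice enumeration used in the first case, which recovers the exact integer optimum. The careful accounting of the single extra $\gamma$ (producing $\kappa\gamma$ rather than $(\kappa-1)\gamma$) and its compatibility with every ordering of $\lambda_0,\lambda_{0+},\lambda_{1+},\lambda_1$ treated in the proof of Lemma~\ref{lem:bissectionterm} is the delicate bookkeeping I expect to consume most of the work.
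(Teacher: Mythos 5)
Your overall algorithm (run the approximate golden search of Proposition~\ref{prop:bisection_continuous} with a second stopping rule when the interval length drops below $\tau$) and your iteration accounting coincide with the paper's, and your treatment of the small-interval case is essentially the paper's Case~1. The genuine gap is in the accuracy-termination case, where your route through the central window $[\lambda_0,\lambda_1]$ does not deliver the stated $\kappa\gamma$ bound. The hypothesis of Lemma~\ref{lem:bissectionterm} only gives $|\tilde\varphi(\lambda_0)-\tilde\varphi(\lambda_1)|<\gamma$; with your normalization $\tilde\varphi(\lambda_0)\le\tilde\varphi(\lambda_1)$, convexity yields $\varphi\le\max\{\varphi(\lambda_0),\varphi(\lambda_1)\}\le\tilde\varphi(\lambda_1)<\tilde\varphi(\lambda_0)+\gamma$ on the window, so it is the sublevel set at level $\tilde\varphi(\lambda_0)+\gamma$ that covers the window, not the one at level $\tilde\varphi(\lambda_0)$ as you claim. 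A lattice point $\hat x$ in the window therefore only satisfies $\tilde\varphi(\hat x)\le\varphi(\hat x)+\gamma\le\tilde\varphi(\lambda_0)+2\gamma\le\min\{\varphi(t):t\in[0,1]\}+(\kappa+1)\gamma$, one $\gamma$ short of the claim. Moreover, your patch for the regime where the window contains no lattice point (enumerate once $|I|=\OO(\tau)$) can require inspecting up to $\lceil 1/(\lambda_1-\lambda_0)\rceil+1\approx 6$ lattice points, which exceeds the two extra evaluations available inside the bound $4+\lceil\ln((\kappa-1)\gamma/V_\varphi)/\ln(\lambda_1)\rceil$.

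The paper avoids both problems by not looking inside the window at all. It takes the two scaled integers $\hat t_-$ and $\hat t_+$ adjacent to the best continuous point $\bar t_k$ returned by Proposition~\ref{prop:bisection_continuous} (exactly two extra evaluations). If $\min\{\tilde\varphi(\hat t_-),\tilde\varphi(\hat t_+)\}\le\tilde\varphi(\bar t_k)+\gamma$, this is at most $\min\{\varphi(t):t\in[0,1]\}+\kappa\gamma$ and you are done. Otherwise, a convexity contradiction shows that no scaled integer $\hat t$ can satisfy $\varphi(\hat t)<\min\{\varphi(\hat t_-),\varphi(\hat t_+)\}$: writing (say) $\hat t_-=\lambda\hat t+(1-\lambda)\bar t_k$ with $\lambda<1$ and using $\varphi(\bar t_k)\le\tilde\varphi(\bar t_k)<\tilde\varphi(\hat t_-)-\gamma\le\varphi(\hat t_-)$ gives $\varphi(\hat t_-)<\varphi(\hat t_-)$. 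Hence in that branch $\hat t_-$ or $\hat t_+$ attains the lattice minimum exactly, and the returned point is within the single evaluation error $\gamma$ of it. You should replace your window/sublevel-set argument by this bracketing argument to recover the constants in the statement.
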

\begin{proof}
\smartqed We denote in this proof the points in $(t_0+\tau\Z)$ as \emph{scaled integers}. To avoid a trivial situation, we assume that $[0,1]$ contains at least two such scaled integers. 

Let us use the approximate bisection method described in
the proof of Proposition~\ref{prop:bisection_continuous} until the remaining interval has a size smaller than $\tau$, so that it contains at most one scaled integer. Two possibilities arise: either the algorithm indeed finds 
such a small interval $I_k$, or it finishes prematurely, with a remaining interval $I_k$ larger than $\tau$. 

In the first case, which requires at most
$2+\lceil\ln(\tau)/\ln(\lambda_1)\rceil$
evaluations of $\tilde\varphi$, we know that $I_k$ contains
the continuous minimizer of $\varphi$.
Hence, the actual minimizer of $\varphi$ over
$(t_0+\tau\Z)\cap [0,1]$ is among at most three scaled
integers, namely the possible scaled integer in $I_k$, and,
at each side of $I_k$, the possible scaled integers that are
the closest to $I_k$.
By convexity of $\varphi$, the best of these three points, say $\hat x$, satisfies
$\tilde\varphi(\hat x)-\gamma\leq \varphi(\hat x) =
\min\{\varphi(\hat t):\hat t\in(t_0+\tau\Z)\cap [0,1]\}$.

In the second case, we have an interval $I_k\subseteq [0,1]$
and a point $\bar t_k$ that fulfill $\tilde\varphi(\bar
t_k)\leq\min\{\varphi(t): t\in[0,1]\}+(\kappa-1)\gamma$,
which was determined within at most $2+\left\lceil\frac{\ln((\kappa-1)\gamma/V_\varphi)}{\ln(\lambda_1)}\right\rceil$ evaluations of $\tilde\varphi$. Consider the two scaled integers $\hat t_-$ and $\hat t_+$ that are the closest from $\bar t_k$. One of these two points constitutes an acceptable output for our algorithm. Indeed, suppose first that $\min\{\tilde \varphi(\hat t_-), \tilde\varphi(\hat t_+)\}\leq\tilde \varphi(\bar t_k)+\gamma$. Then:
\begin{eqnarray*}
\min\{\tilde \varphi(\hat t_-),\tilde \varphi(\hat t_+)\}\leq\tilde \varphi(\bar t_k)+\gamma\leq \min\{\varphi(t):t\in[0,1]\} + \kappa\gamma,
\end{eqnarray*}
and we are done. Suppose that $\min\{\tilde \varphi(\hat
t_-), \tilde\varphi(\hat t_+)\}>\tilde \varphi(\bar
t_k)+\gamma$ and that there exists a scaled integer $\hat t$
with $\varphi(\hat t)<\min\{\varphi(\hat t_-), \varphi(\hat
t_+)\}$. Without loss of generality, let $\hat t_-\in\conv\{\hat t,\bar t_k\}$, that is $\hat t_- = \lambda \hat t+(1-\lambda)\bar t_k$, with $0\leq\lambda < 1$.
We have by convexity of $\varphi$:
\[
\varphi(\hat t_-)\leq \lambda \varphi(\hat t)+(1-\lambda)\varphi(\bar t_k)<\lambda \varphi(\hat t_-)+(1-\lambda)(\tilde\varphi(\hat t_-)-\gamma),
\]
which is a contradiction because $\lambda<1$ and $\tilde
\varphi(\hat t_-) -\gamma\leq\varphi(\hat t_-)$.
So, it follows that $\varphi(\hat t) \ge
\min\{\varphi(\hat t_-),\varphi(\hat t_+)\}$ for every
$\hat t\in(t_0+\tau\Z)\cap [0,1]$, proving the statement.
\qed
%
\end{proof}

In the the following we extend the above results to the problem $\min\{\varphi(t):t\in [0,1],\; g(t)\leq 0\}$, where $g:[0,1]\to\R$ is a convex function with a known spread $V_g$. 
In the case that we have access to exact values of $g$, an approach for attacking the problem would be the following:
we first determine whether there exists an element $\bar{t}\in[0,1]$ with $g(\bar{t})\leq 0$.
If $\bar{t}$ exists, we determine the exact bounds $t_-$ and $t_+$ of the interval  $\{t\in [0,1], g(t)\leq 0\}$.
Then we minimize the function $f$ over $[t_-,t_+]$.

The situation where we do not have access to exact values of $g$ or where we cannot determine the feasible interval $[t_-,t_+]$  induces some technical complications. We shall not investigate them in this paper, except in the remaining of this subsection in order to appreciate the modification our method needs in that situation: let us assume, that we have only access to a value $\tilde g(t)\in [g(t),g(t)+\gamma]$. 
In order to ensure that the constraint $g$ is well-posed we make an additional assumption: either $\{t\in [0,1]:|g(t)|\leq \gamma\}$ is  empty, or the quantity $\min\{|g'(t)|:g'(t)\in\partial g(t), |g(t)|\leq \gamma\}$ is non-zero, and even reasonably large. 
This ensures that the (possibly empty) \mbox{$0$-level} set of $g$ is known with enough accuracy.
We denote by $\theta>0$ a lower bound on this minimum, and for simplicity assume that $\theta = 2^N\gamma$ for a suitable $N\in\N$.


Our strategy proceeds as follows. First we determine whether there exists a point $\bar t\in [0,1]$ for which $g(\bar t)<0$ by applying the minimization procedure described in Proposition \ref{prop:bisection_continuous}. 
If this procedure only returns nonnegative values, we can conclude after at most $2+\lceil\ln((\kappa-1)\gamma/V_g)/\ln(\lambda_1)\rceil$ evaluations of $\tilde{g}$ that 
$g(t)\geq -(\kappa-1)\gamma$, in which case we declare that we could not locate any feasible point in $[0,1]$.

Otherwise, if  we find a point $\bar t\in[0,1]$ with $\tilde g(\bar t)<0$,
we continue and compute approximate bounds $t_-$ and $t_+$ of the interval  $\{t\in [0,1], g(t)\leq 0\}$.
For that, we assume $\tilde g(0),\tilde g(1)\ge0$.
By symmetry, we only describe how to construct $t_-$ such that $\tilde g(t_-)\leq 0$
and $g(t_--\eta)\geq 0$ for an $\eta>0$ reasonably small. 
Note that $g(t)\leq 0$ on $[t_-,\bar t]$ by convexity of $g$.

%

In order to compute $t_-$, we adapt the standard bisection method for finding a root of a function.
Note that the function $\tilde g$ might not have any root as it might not be continuous. 
Our adapted method constructs a decreasing sequence of intervals $[a_k,b_k]$ such that $\tilde g(a_k) > 0$, $\tilde g(b_k)\leq 0$, and $b_{k+1}-a_{k+1}= \frac{1}{2}(b_k-a_k)$. 
If $\tilde g(a_k) >\gamma$, we know that $g$ is positive on $[0,a_k]$, and we know that there is a root of $g$ on $[a_k,b_k]$. 
Otherwise, if $0<\tilde g(a_k)\leq\gamma$ and that the interval $[a_k,b_k]$ has a length larger or equal to 
$\frac{\gamma}{\theta}$. 
Given the form of $\theta$, we know
that $k\leq N$. 
We claim that for every $0\leq t\leq
\min\{0,a_k-\frac{\gamma}{\theta}\}$ we have
$g(t)\geq 0$, so that we can take
$\eta:=2\frac{\gamma}{\theta}$ and $t_-:=b_N$.
Indeed, assume that $g'(a_k) \ge \theta$, then
\[
\tilde g(b_k)\geq g(b_k)\geq g(a_k) + g'(a_k)(b_k-a_k) > -\gamma + \theta\cdot\frac{\gamma}{\theta}\geq 0
\]
giving a contradiction, so we must have $g'(a_k) \le -\theta$.
We can exclude the case where $t$ can only be $0$. As claimed, we have \[g(t)\geq g(a_k) + g'(a_k)(t-a_k) \geq  -\gamma + \theta(a_k-t)\geq 0\] as $\frac{\gamma}{\theta}\leq a_k-t$. This takes $\left\lceil\ln(\frac{\gamma}{\theta}) / \ln(\frac{1}{2})\right\rceil$ evaluations of $\tilde g$.

Summarizing this, we just sketched the proof of the following corollary.

\begin{corollary}\label{cor:bisection_constraints}
There exists an algorithm that solves
approximately
$\min\{\varphi(t) : t \in [0,1], g(t) \le 0\}$, in the sense that it finds, if they exist, three points $0\leq t_-\leq x\leq t_+\leq 1$ with:
\begin{enumerate}[(a)]
 \item $g(t)\leq \tilde g(t)\leq 0$ for every $t\in[t_-,t_+]$,
 \item if $t_-\geq2\frac{\gamma}{\theta}$, then $g(t)\geq 0$ for every $t\in [0,t_--2\frac{\gamma}{\theta}]$,
 \item if $t_+\leq 1-2\frac{\gamma}{\theta}$, then $g(t)\geq 0$ for every $t\in [t_++2\frac{\gamma}{\theta},1]$,
 \item $\tilde\varphi(x)\leq\min\{\varphi(t):t\in[t_-,t_+], \;g(t)\le0\}+(\kappa-1)\gamma$
\end{enumerate}
within at most
$3+\left\lceil\frac{\ln((\kappa-1)\gamma/V_g)}{\ln(\lambda_1)}\right\rceil
  +2\left\lceil\frac{\ln(\gamma/\theta)}{\ln(1/2)}\right\rceil$
  evaluations of $\tilde g$ and at most
  $2+\left\lceil\frac{\ln((\kappa-1)\gamma/V_\varphi)}{\ln(\lambda_1)}\right\rceil$
evaluations of $\tilde\varphi$.
\end{corollary}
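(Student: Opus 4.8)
The plan is to split the algorithm into three successive phases and to bound the cost of each: a \emph{feasibility test}, an approximate \emph{localization} of the two endpoints $t_-,t_+$ of the feasible set $\{t\in[0,1]:g(t)\le0\}$, and finally an approximate \emph{minimization} of $\varphi$ over $[t_-,t_+]$. The first and third phases are direct invocations of the approximate bisection of Proposition~\ref{prop:bisection_continuous}; all the genuinely new work sits in the middle phase, which must cope with the fact that the surrogate $\tilde g$ need not be continuous and need not possess a root at all.

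First I would run the minimization routine of Proposition~\ref{prop:bisection_continuous} on $g$ (accessed through $\tilde g$). Within $2+\lceil\ln((\kappa-1)\gamma/V_g)/\ln(\lambda_1)\rceil$ evaluations of $\tilde g$ this either exhibits a point $\bar t$ with $\tilde g(\bar t)<0$, hence $g(\bar t)<0$, or it certifies $\min_{t\in[0,1]}g(t)\ge-(\kappa-1)\gamma$, in which case I declare infeasibility. After checking the two endpoints I may assume $\tilde g(0),\tilde g(1)\ge0$, so that the feasible interval lies strictly inside $[0,1]$; the boundary cases $\tilde g(0)<0$ or $\tilde g(1)<0$ only simplify the localization by fixing $t_-=0$ or $t_+=1$.

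The heart of the proof is the construction of $t_-$ (that of $t_+$ being symmetric). I would adapt the classical bisection-for-a-root on $[a_0,b_0]=[0,\bar t]$, maintaining nested intervals $[a_k,b_k]$ with $\tilde g(a_k)>0$, $\tilde g(b_k)\le0$, and $b_{k+1}-a_{k+1}=\tfrac12(b_k-a_k)$, running until the bracket has length $\gamma/\theta$. Because $\tilde g$ may be discontinuous I cannot converge to an exact root; instead the well-posedness hypothesis $|g'|\ge\theta$ on $\{|g|\le\gamma\}$ will certify the sign of the \emph{true} $g$ to the left of $a_k$. Concretely, at the terminal bracket, if $0<\tilde g(a_k)\le\gamma$ then $|g(a_k)|\le\gamma$, and the alternative $g'(a_k)\ge\theta$ would force
\[
\tilde g(b_k)\ge g(b_k)\ge g(a_k)+g'(a_k)(b_k-a_k)>-\gamma+\theta\cdot\frac{\gamma}{\theta}\ge0,
\]
contradicting $\tilde g(b_k)\le0$; hence $g'(a_k)\le-\theta$, and convexity gives $g(t)\ge g(a_k)+g'(a_k)(t-a_k)\ge-\gamma+\theta(a_k-t)\ge0$ for every $t\le a_k-\gamma/\theta$. (If instead $\tilde g(a_k)>\gamma$, then $g(a_k)>0$ and, since $g(\bar t)<0$, convexity makes $g$ positive on all of $[0,a_k]$, so the same conclusion holds a fortiori.) Setting $t_-:=b_k$ and $\eta:=2\gamma/\theta$ yields property~(b), as $a_k\le b_k-\gamma/\theta$ embeds $[0,t_--2\gamma/\theta]$ in $\{t\le a_k-\gamma/\theta\}$; and since $\theta=2^N\gamma$, the bracket length $\gamma/\theta=2^{-N}$ is reached within $N=\lceil\ln(\gamma/\theta)/\ln(1/2)\rceil$ halvings, so $k\le N$. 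The symmetric routine produces $t_+$ and property~(c).

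Finally, property~(a) follows from the convexity of $g$: the endpoints satisfy $g(t_\pm)\le\tilde g(t_\pm)\le0$, so $g\le0$ on the whole segment $[t_-,t_+]$, certifying that $[t_-,t_+]$ is feasible. I then obtain $x$ and property~(d) by a last call to Proposition~\ref{prop:bisection_continuous}, minimizing $\varphi$ over the rescaled $[t_-,t_+]$ at a cost of $2+\lceil\ln((\kappa-1)\gamma/V_\varphi)/\ln(\lambda_1)\rceil$ evaluations of $\tilde\varphi$, using that $\min_{[t_-,t_+]}\varphi=\min\{\varphi(t):t\in[t_-,t_+],\,g(t)\le0\}$. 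Summing the counts — $2+\lceil\ln((\kappa-1)\gamma/V_g)/\ln(\lambda_1)\rceil$ evaluations of $\tilde g$ for feasibility, a constant for the endpoint checks, and $2\lceil\ln(\gamma/\theta)/\ln(1/2)\rceil$ for the two root searches — reproduces the claimed bound. I expect the discontinuity of $\tilde g$ to be the main obstacle: the correctness of (b) and (c) rests entirely on converting the slope lower bound $\theta$ into a guaranteed sign of $g$ at distance $2\gamma/\theta$, and on verifying that this certificate is available \emph{before} the bracketing interval shrinks below $\gamma/\theta$.
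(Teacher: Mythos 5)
Your proposal follows the paper's own argument essentially step for step: a feasibility test via Proposition~\ref{prop:bisection_continuous} applied to $g$, an approximate root-bracketing phase that uses the slope certificate $|g'|\ge\theta$ on $\{|g|\le\gamma\}$ to localize $t_-$ and $t_+$ (with the identical contradiction inequality $\tilde g(b_k)\ge g(a_k)+g'(a_k)(b_k-a_k)>-\gamma+\theta\cdot\frac{\gamma}{\theta}\ge 0$), and a final approximate minimization of $\varphi$ over $[t_-,t_+]$. The one quibble is the inequality you cite for property~(b) — the inclusion $[0,t_--2\frac{\gamma}{\theta}]\subseteq\{t\le a_k-\frac{\gamma}{\theta}\}$ requires $b_k-a_k\le\frac{\gamma}{\theta}$ rather than $a_k\le b_k-\frac{\gamma}{\theta}$, whereas the contradiction step needs $b_k-a_k\ge\frac{\gamma}{\theta}$ — but the paper's own sketch is equally loose on this point, so it is not a substantive departure.
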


As stressed before above, we assume from now on that we can compute exactly the roots of the function $g$ on a given interval, so that the segment $[t_-,t_+]$ in Corollary \ref{cor:bisection_constraints} is precisely our feasible set. This situation occurs e.g. in mixed-integer convex optimization with one integer variable when the feasible set $S\subset\R\times\R^d$ is a polytope.

\begin{remark}
In order to solve problem \eqref{eq:general_MIP_f} with one integer variable, we can extend Proposition \ref{prop:bisection_integer} to implement the improvement oracle $\mathrm{O}_{0,\kappa\gamma}$.
We need three assumptions: first,
$S\subseteq [a,b]\times\R^d$ with $a<b$; second, $f$ has a finite spread on the feasible set; and third
we can minimize $f(x,y)$ with $(x,y)\in S$ and $x$ fixed up to an accuracy $\gamma$.
That is, we have access to a value $\tilde\varphi(x)\in[\varphi(x),\varphi(x)+\gamma]$ with $\varphi(x):=\min\{f(x,y):(x,y)\in S\}$ being convex. 

Given a feasible query point $(x,y)\in[a,b]\times\R^d$, we
can determine correctly that there is no point $(\hat x,\bar
y) \in ((t_0 + \tau \Z) \cap [0,1]) \times \R^d$
for which $f(\hat x,\bar y)\leq f(x,y)$, provided that 
the output $\hat x$ of our approximate bisection method for integers given in
Proposition \ref{prop:bisection_integer} satisfies $\tilde
\varphi(\hat x)-\kappa\gamma>f(x,y)$.
Otherwise, we can
determine a point $(\hat x,\bar y)$ for which $f(\hat x,\bar
y)\leq f(x,y)+\kappa\gamma$. Note that this oracle cannot
report \textbf{a} and \textbf{b}
simultaneously.$\hfill\diamond$

%
%
\end{remark}

\subsection{Mixed-integer convex problems with two integer variables}\label{sec:MIP2D}

We could use the Mirror-Descent Method in Algorithm \ref{algo:mirror} to solve the generic problem \eqref{eq:general_MIP_f} when $n=2$ with $z\mapsto \frac{1}{2}||z||^2_2$ as function $V$, so that $\sigma = 1$ and $M = \frac{1}{2}\diam(S)^2$, where $\diam(S) = \max\{||z-z'||_2:z,z'\in S\}$. According to \eqref{eq:complexity_mirror}, the worst-case number of iterations is bounded by a multiple of $L\sqrt{M/\sigma} = \OO(L\,\diam(S))$, where $L$ is the Lipschitz constant of $f$. As $V_f\leq L\,\diam(S)$, the resulting algorithm would have a worst-case complexity of $\Omega(V_f)$.

We improve this straightforward approach with a variant of Algorithm \ref{algo:2d}, whose complexity is polynomial in $\ln(V_f)$. This variant takes into account the fact that we do not have access to exact values of the partial minimization function $\phi$ defined in the preamble of this section. 

\begin{proposition}\label{prop:2dMIP}
Suppose that we can determine, for every $x \in \R^n$
with $g(x)\leq 0$, a point $y_x\in\R^d$ satisfying
$f(x,y_x)-\gamma\leq\min\{f(x,y):(x,y)\in S\}$.
Then we can implement the oracle
$\mathrm{O}_{0,\kappa\gamma}$ such that for every
$(x,y) \in S$ it takes a number of evaluations of $f$ that
is polynomial in $\ln(V_f/\gamma)$.
\end{proposition}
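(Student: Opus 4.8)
The plan is to realize $\mathrm{O}_{0,\kappa\gamma}$ by running the two-dimensional search of Theorem~\ref{2dim::theorem} (Algorithm~\ref{algo:2d}) on the partial minimization function $\phi$ rather than on $f$, exploiting the identity
\[
\min\{f(\hat x,y):(\hat x,y)\in S\cap(\Z^2\times\R^d)\}=\min\{\phi(\hat x):g(\hat x)\le 0,\ \hat x\in\Z^2\}.
\]
Given the query point $z=(x,y)\in S$, its integer part $x\in\R^2$ satisfies $g(x)\le 0$ and $\phi(x)\le f(z)$, so $x$ is an admissible input for that search with threshold value $f(z)$. Realizing the oracle then amounts to deciding whether the convex region $\{x'\in[-B,B]^2:\phi(x')\le f(z),\ g(x')\le 0\}$ contains a lattice point: if Algorithm~\ref{algo:2d} produces such a point $\hat x$, we return $\hat z:=(\hat x,y_{\hat x})$; otherwise we report {\bf b}.

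First I would replace every exact one-dimensional subroutine inside Algorithm~\ref{algo:2d} by its approximate counterpart from Subsection~\ref{sec:MIP1D}, since we only have the surrogate $\tilde\phi(x')=f(x',y_{x'})\in[\phi(x'),\phi(x')+\gamma]$. Thus the one-dimensional integer minimizations over the segments and lines produced by Lemmata~\ref{2dim::lowdimension} and~\ref{2dim::shrinking} are performed with the approximate golden search of Proposition~\ref{prop:bisection_integer}, and the half-space/level-set decisions on Lines~\ref{algo:2d:left1}--\ref{algo:2d:left2} with the procedure behind Corollary~\ref{cor:bisection_constraints}; the constraint $g$ is handled exactly, as agreed after that corollary. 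Each subroutine costs $\OO(\ln(V_f/\gamma))$ evaluations of $f$ (using $V_\phi\le V_f$), and, crucially, each is \emph{one-sided}: by Lemma~\ref{lem:bissection1} a piece of a segment is discarded only where $\varphi$ is provably not smaller, so no lattice point with $\phi\le f(z)$ is ever wrongly excluded. The error then propagates cleanly: when a point $\hat x$ is returned, $f(\hat z)=\tilde\phi(\hat x)\le f(z)+\kappa\gamma$ by the guarantee of Proposition~\ref{prop:bisection_integer}, which is case~{\bf a}; and a declaration that no lattice point lies in the region is valid for the exact $\phi$, hence there is no $\hat z\in S\cap(\Z^2\times\R^d)$ with $f(\hat z)\le f(z)$, which is case~{\bf b}.

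The main obstacle is the complexity, precisely the point where this variant must depart from Theorem~\ref{2dim::theorem}. There, the $\OO(\ln B)$ bound comes from shrinking the search triangle by volume until $\vol<\frac12$, i.e. until the lattice is resolved at unit scale; done verbatim this would reintroduce a $\ln B$ factor. Instead I would exploit the $\kappa\gamma$ slack and measure progress in objective value rather than in volume: each outer step certifies that the still-unexplored part of the box cannot beat the current threshold by more than $\OO(\gamma)$, and since $\phi$ has spread at most $V_f$ and need only be resolved down to the $\gamma$ scale, a constant-factor reduction of the value gap per step caps the number of outer steps at $\OO(\ln(V_f/\gamma))$. Multiplying by the $\OO(\ln(V_f/\gamma))$ cost per step yields the claimed bound. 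The two delicate points to make rigorous are (i) that every surrogate subroutine is genuinely one-sided, so that branch~{\bf b} stays exact despite the $\gamma$-errors, and (ii) the value-based stopping rule that replaces volume reduction and keeps the outer count at $\OO(\ln(V_f/\gamma))$ instead of $\OO(\ln B)$.
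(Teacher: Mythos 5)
Your main construction coincides with the paper's: run Algorithm~\ref{algo:2d} on the partial minimization function, replacing every exact one-dimensional subroutine by the approximate bisections of Propositions~\ref{prop:bisection_continuous} and~\ref{prop:bisection_integer} applied to $\tilde\phi(x')=f(x',y_{x'})\in[\phi(x'),\phi(x')+\gamma]$, so that a returned point satisfies $\tilde\phi(\hat x)\le f(x,y)+\kappa\gamma$ (branch \textbf{a}) while every exclusion is valid for the exact $\phi$ (branch \textbf{b}). One detail you wave at but should make explicit is the side test in Case~2: the paper checks whether some $v\in\conv\{z_0,\hat z\}$ has $\tilde\phi(v)<f(x,y)+(\kappa-1)\gamma$ and then rules out any improving $w$ on the other side by the contradiction $\tilde\phi(\hat z)\le\max\{\phi(v),\phi(w)\}+\gamma\le\max\{\tilde\phi(v),\tilde\phi(w)\}+\gamma<f(x,y)+\kappa\gamma$; this is where the one-sidedness you invoke actually gets proved.

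The genuine gap is your last paragraph. You propose to discard the volume-based outer loop and instead ``measure progress in objective value,'' claiming each outer step certifies that the still-unexplored part of the box cannot beat the threshold by more than $\OO(\gamma)$. That is not what the algorithm certifies: an iteration only establishes that the \emph{discarded} region $T_k\setminus T_{k+1}$ contains no improving lattice point, and says nothing about $T_{k+1}$, which is precisely where an improving point --- possibly with $\phi$ far below $f(z)$ --- may still hide. No value gap contracts geometrically, so a value-based stopping rule would make branch \textbf{b} unsound. The paper keeps the volume-reduction loop verbatim ($\OO(\ln B)$ outer iterations, terminating via Lemma~\ref{2dim::lowdimension}); the only modifications are inside the subroutines. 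Consequently the honest count is polynomial in both $\ln B$ and $\ln(V_f/\gamma)$ --- the point of the proposition, by contrast with the Mirror-Descent bound $\Omega(V_f)$, is that the dependence on $V_f/\gamma$ is logarithmic, not that the $\ln B$ factor disappears. You should drop the value-based stopping rule rather than try to repair it.
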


\begin{proof}
\smartqed
We adapt the algorithm described in the proof of Theorem
\ref{2dim::theorem} for the function
$\phi(x):=\min\{f(x,y):(x,y)\in S\}$, which we only know
approximately. Its available approximation is denoted by
$\tilde\phi(x):=f(x,y_x)\in[\phi(x),\phi(x)+\gamma]$.

Let $(x,y)\in S$ be the query point and let us describe the changes that the algorithm in Theorem \ref{2dim::theorem} requires.
We borrow the notation from the proof of Theorem \ref{2dim::theorem}. 

The one-dimensional integer minimization problems which arise in the course of the algorithm require the use of our approximate bisection method for integers in Proposition \ref{prop:bisection_integer}. 
This bisection procedure detects, if it exists, a point $\hat x$ on the line of interest for which $\tilde\phi(\hat x)=f(\hat x,y_{\hat x})\leq f(x,y)+ \kappa\gamma$ and we are done. Or it reports correctly that there is no integer $\hat x$ on the line of interest with $\phi(\hat x)\leq f(x,y)$.

In \textbf{Case 2}, we would need to check whether $\phi(\hat z)\leq f(x,y)$.
In view of our accuracy requirement, we only need to check $\tilde\phi(\hat  z)\leq f(x,y)+\kappa\gamma$.

We also need to verify whether the line $H$ intersects the level set $\{x\in\R^2 \;|\; \phi(x)\leq f(x,y)\}$.
%
We use the following approximate version:
\[
\textrm{``\textbf{check whether} there is a $v\in\conv\{z_0,\hat z\}$ for which $\tilde\phi(v)< f(x,y)+(\kappa-1)\gamma$''},
\]
which can be verified using Proposition \ref{prop:bisection_continuous}. If such a point $v$ exists, the convexity of $\phi$ forbids any $w\in\conv\{\hat z,z_1\}$ to satisfy $\phi(w)\leq f(x,y)$, for otherwise:
\[
 \tilde\phi(\hat z)\leq\phi(\hat z)+ \gamma \leq\max\{\phi(v),\phi(w)\}+ \gamma
 \leq\max\{\tilde\phi(v),\tilde\phi(w)\}+ \gamma <f(x,y)+\kappa\gamma,
\]
a contradiction. Now, if such a point $v$ does not exist, we perform the same test on $\conv\{\hat z,z_1\}$.
We can thereby determine correctly which side of $\hat z$ on $H$ has an empty intersection with the level set.
%
\qed
\end{proof}

Similarly as in Corollary~\ref{2dim::optimization}, we can
extend this oracle into an approximate minimization
procedure, which solves our optimization problem up to an
accuracy of $\kappa\gamma$, provided that we have at our
disposal a point $(x,y) \in S$ such that $f(x,y)$
is a lower bound on the mixed-integer optimal value.

Let us now modify our method for finding the
$k$-th best point for two-dimensional problems to problems
with two integer and $d$ continuous variables.
Here, we aim at finding --- at least approximately --- the $k$-th best fiber $\hat x^*_k\in [-B,B]^2$, so that:
\[
(\hat x^*_k,y^*_k)\in\arg\min\{f(x,y):(x,y)\in S\cap ((\Z^2\setminus\{\hat x^*_1,\ldots,\hat x^*_{k-1}\})\times\R^d)\}
\]
for a $y^*_k\in\R^d$. We set $\hat f^*_{[k]}:=f(\hat x^*_k,y^*_k)$.
The following proposition summarizes the necessary
extensions of Subsection \ref{subsection:k-th point}.

\begin{proposition}\label{prop:kth point approx}
 Let $k \geq 2$ and let $\Pi_{k-1} := \{\hat z^*_1, \ldots,
 \hat z^*_{k-1}\} \subseteq [-B,B]^2 \cap \Z^2$
be points for which 
 $\phi(\hat z^*_i)\leq \hat f^*_i + i\kappa\gamma$, $g(\hat
 z^*_i)\leq 0$ when $1\leq i<k$ and such that
 $\conv\{\Pi_{k-1}\}\cap\Z^2=\Pi_{k-1}$. 
In a number of approximate evaluations of $f$ and
$g_1,\dots,g_m$ that is polynomial in $\ln(V_f/\gamma)$ and
$k$, one can either
\begin{enumerate}[(a)]
  \item find an integral point $\hat z^*_k \in
      [-B,B]^2$ for which $\phi(\hat z^*_k)\leq \hat f^*_{[k]}
    + k\kappa\gamma$,
$g(\hat z^*_k)\leq 0$ and $\conv\{\Pi_{k-1},\hat z^*_k\}\cap\Z^2=\Pi_{k-1}\cup\{\hat z^*_k\}$, or
  \item show that there is no integral point
   $\hat z^*_k \in [-B,B]^2$  for which $g(\hat z^*_k)\leq 0$.
\end{enumerate}
\end{proposition}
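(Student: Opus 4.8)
The plan is to mirror the development of Subsection~\ref{subsection:k-th point}, replacing Algorithm~\ref{algo:2d} by the approximate oracle of Proposition~\ref{prop:2dMIP} and replacing exact evaluations of the partial minimization function by the approximate values $\tilde\phi(x)=f(x,y_x)\in[\phi(x),\phi(x)+\gamma]$. The overall structure is: first construct the feasible search region $[-B,B]^2\setminus\conv\{\Pi_{k-1}\}$, decompose it into search triangles, run the approximate minimization procedure on each, and finally post-process the candidate point so that the convex-hull condition $\conv\{\Pi_{k-1},\hat z^*_k\}\cap\Z^2=\Pi_{k-1}\cup\{\hat z^*_k\}$ holds.

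First I would handle the search phase exactly as for $k\ge 3$ in Subsection~\ref{subsection:k-th point}: compute the $\nu$ vertices of $P_{k-1}:=\conv\{\Pi_{k-1}\}$ counterclockwise via the Graham scan in $\OO(k\ln k)$ operations, build the regions $R_i$ and triangulate them into search triangles (at most four of the $R_i$ need splitting), and perturb each search triangle by $\varepsilon$ of encoding length $\OO(\ln(B))$ to exclude the vertices $\hat y^*_i$ of $P_{k-1}$ as candidate outputs. To each perturbed search triangle I apply the approximate version of Algorithm~\ref{algo:2d} underlying Proposition~\ref{prop:2dMIP}, using the approximate bisection method of Proposition~\ref{prop:bisection_integer} for the one-dimensional integer subproblems and Proposition~\ref{prop:bisection_continuous} for the level-set side tests in Case~2. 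Each such call costs a number of approximate evaluations polynomial in $\ln(V_f/\gamma)$, and there are $\OO(\nu)=\OO(k)$ triangles, so the search phase stays polynomial in $\ln(V_f/\gamma)$ and $k$. Among all feasible integral points returned, I keep the one $\hat x_k$ of smallest $\tilde\phi$-value; the guarantee from Proposition~\ref{prop:2dMIP} and Proposition~\ref{prop:bisection_integer} is that $\phi(\hat x_k)\le \hat f^*_{[k]}+\kappa\gamma$ and $g(\hat x_k)\le 0$, or else that no integral point of $[-B,B]^2$ outside $P_{k-1}$ satisfies $\phi(\cdot)\le f$ at the relevant query value --- which, since the true $k$-th best fiber lies outside $P_{k-1}$, yields alternative (b).

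The main obstacle, and the place requiring the most care, is reconciling the convex-hull normalization of Lemma~\ref{lem:P_k cap Z^2 = S_k} with the fact that we only know $\phi$ approximately. In the exact setting, Part~(a) of that lemma relied on $f$ being \emph{exactly} constant along the segment $Q_I$ when $f(\hat x_k^*)=\hat f^*$, and Algorithm~\ref{algo:P_k cap Z^2} used exact ties to slide $\hat x^*_k$ inward. Here equalities are only known up to $\kappa\gamma$, so I cannot detect ``$\phi=\hat f^*$'' exactly. The fix I would adopt is to apply Algorithm~\ref{algo:P_k cap Z^2} geometrically: the construction there depends only on the vertices of $P_{k-1}$ and on solving two-dimensional integer linear programs over triangles $T_i$ to pull $\hat x_k$ to the lattice point nearest $P_{k-1}$ in the appropriate cone, and these ILPs involve no evaluation of $\phi$ at all. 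Thus the replacement point $\hat z^*_k$ it produces satisfies $\conv\{\Pi_{k-1},\hat z^*_k\}\cap\Z^2=\Pi_{k-1}\cup\{\hat z^*_k\}$ purely by the lattice-covering argument of Lemma~\ref{lem:P_k cap Z^2 = S_k}(b), independently of the accuracy $\gamma$. It remains to check that moving $\hat x_k$ to $\hat z^*_k$ does not degrade the objective guarantee: since $\hat z^*_k$ lies on a segment joining $\hat x_k$ to a vertex of $P_{k-1}$ whose $\phi$-value is already within the additive $(k-1)\kappa\gamma$ budget of the earlier points, convexity of $\phi$ gives $\phi(\hat z^*_k)\le\max\{\phi(\hat x_k),\phi(\text{vertex})\}$, and accumulating the per-point error $\kappa\gamma$ over the $k$ points yields the stated bound $\phi(\hat z^*_k)\le\hat f^*_{[k]}+k\kappa\gamma$. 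I would verify this accumulation carefully, as it is exactly where the factor $k$ in the additive error originates and where the inductive hypothesis $\phi(\hat z^*_i)\le\hat f^*_i+i\kappa\gamma$ is consumed.
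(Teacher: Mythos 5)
Your proposal follows the paper's proof in all essentials: the same triangulation of $[-B,B]^2\setminus\conv\{\Pi_{k-1}\}$ into perturbed search triangles, the same approximate version of Algorithm~\ref{algo:2d} via Proposition~\ref{prop:2dMIP} on each of them, and the same resolution of the convex-hull condition by running the purely geometric Algorithm~\ref{algo:P_k cap Z^2} and invoking convexity of $\phi$ and $g$ to see that every integer point of $\conv\{\Pi_{k-1},\hat z^*_k\}$ inherits the bounds $\phi\le\hat f^*_{[k]}+k\kappa\gamma$ and $g\le 0$. The one place where your bookkeeping goes astray is the claim that the search phase alone already yields $\phi(\hat x_k)\le\hat f^*_{[k]}+\kappa\gamma$: the oracle of Proposition~\ref{prop:2dMIP} only guarantees accuracy $\kappa\gamma$ \emph{relative to the query value}, and the available query points are the previously found $\hat z^*_i$, whose values are only within $(k-1)\kappa\gamma$ of a true lower bound on $\hat f^*_{[k]}$; consequently the search may discard the exact $k$-th best fiber and only delivers $\tilde\phi(\hat z^*_k)\le\tilde\phi(\hat z^*_i)+\kappa\gamma\le\hat f^*_{[k]}+k\kappa\gamma$. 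So the factor $k$ actually originates in the chained query values of the search phase (this is precisely the two-case analysis the paper spells out for $k=2$), not in the hull normalization, which adds no further error; your final bound of $k\kappa\gamma$ survives, but the intermediate claim of per-search accuracy $\kappa\gamma$ should be corrected.
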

\begin{proof}
\smartqed
 If $k=2$, we run Algorithm~\ref{algo:2d} applied to
 $\hat z^*_1$
 with Line \ref{algo:2d:ip} replaced by solving
 $\min\{\langle h,\hat y\rangle: \hat y\in\bar
 T_k\cap\Z^2,\; \langle h,\hat y\rangle\geq\langle h,
\hat z^\ast_1\rangle+1\}$, where $h \in \Z^2$ such that $\gcd(h_1,h_2) = 1$.
We also need to use approximate bisection methods instead of
exact ones.
Following the proof of Proposition~\ref{prop:2dMIP}, the
oracle finds, if it exists, a feasible point $\hat z^*_2$.
Either $\tilde\phi(\hat z^*_2)\leq\tilde\phi(\hat z^*_1) + \kappa\gamma\leq \hat f^*_{[1]} + 2\kappa\gamma\leq \hat f^*_{[2]} + 2\kappa\gamma$, or $\tilde\phi(\hat z^*_2)>\tilde\phi(\hat z^*_1) + \kappa\gamma$, then $\phi(\hat z^*_2)\leq\tilde\phi(\hat z^*_2)\leq\hat f^*_{[2]} + \kappa\gamma$.
Note that, if $\phi(\hat z^*_2)>\phi(\hat z^*_1) + \kappa\gamma$, we can conclude a posteriori that $z^*_1$ corresponds precisely to $f^*_{[1]}$. 
 
 
For $k\geq 3$, we can define the same triangulation as in Figure~\ref{fig:triangulation2}. Replicating the observation sketched above, we generate indeed a feasible point $\hat z^*_k$ for which $\tilde\phi(\hat z^*_k)\leq \hat f^*_{[k]} + k\kappa\gamma$.

Lemma~\ref{lem:P_k cap Z^2 = S_k} is extended as follows. Suppose that there is an integer point $\hat x$
in $\conv\{\Pi_{k-1},\hat
z^*_k\}\setminus(\Pi_{k-1}\cup\{\hat z^*_k\})$. Since
$\phi(x)\leq \tilde\phi(x)\leq\hat f^*_{[k]}+k\kappa\gamma$ and $g(x)\leq 0$ for
every $x\in\Pi_{k-1}\cup\{\hat z^*_k\}$, we have $\phi(\hat
x)\leq\hat f^*_{[k]}+k\kappa\gamma$ and $g(\hat x)\leq 0$ by
convexity. Thus, we can apply Algorithm~\ref{algo:P_k cap
  Z^2} to find a suitable point $\hat z^*_k$ in
$\conv\{\Pi_{k-1},\hat z^*_k\}$.\qed
\end{proof}

\subsection{A finite-time algorithm for mixed-integer convex
optimization}\label{sec:MIPgen}

In this subsection, we explain how to use the results of the
previous subsection in order to realize the oracle $\Ora$ for
$\alpha\ge 0$, $\delta>0$ in the general case, i.e., with
$n\geq 3$ integer and $d$ continuous variables as in \eqref{eq:general_MIP_f}.

Let $z \in S \subseteq [-B,B]^n \times \R^d$ be the query
point of the oracle.
The oracle needs to find a point
$\hat{z} \in S \cap (\Z^n \times \R^d)$ for which
$f(\hat{z}) \le (1+\alpha) f(z) + \delta$
(so as to report {\bf a}), or to certify that $f(z) < f(\hat{z})$
for every $\hat{z} \in S \cap (\Z^n \times \R^d)$
(so as to report {\bf b}). 
To design such an oracle we have at our disposal a procedure
to realize the oracle $\Ora$ for any mixed-integer convex
minimization problem of the kind \eqref{eq:general_MIP_f}
with $n=2$.
We propose a finite-time implementation of $\Ora$ with $\alpha=0$ and $\delta=\kappa\gamma$.
The main idea is to solve the $n$-dimensional case
iteratively through the fixing of integer variables.
This works as follows.
We start by solving approximately the relaxation:
\begin{equation*}
  \hat f_{12}^*:=\min\{f(x,y):(x,y) \in
  S \cap (\Z^2 \times \R^{(n-2)+d})\}
\end{equation*}
with the techniques developed in the previous subsection. If we can solve the partial minimization problems up to an accuracy of $\gamma\leq \delta/\kappa$, we obtain a point $(\hat u_1^*,\hat u_2^*,x_3^*, \dots, x_n^*,y^*) \in
S$ with $\hat u^*_1,\hat u^*_2\in\Z$ and for which:
\[
\tilde f^*_{12}:=f(\hat u_1^*,\hat u_2^*,x_3^*, \dots, x_n^*,y^*)\leq\hat f_{12}^* + \kappa\gamma
\]
As $\hat f^*_{12}$ is a lower bound on the mixed-integer optimal value $\hat f^*$, we can make our oracle output \textbf{b} if $\tilde f^*_{12}-\kappa\gamma > f(z)$. So, assume that $\tilde f^*_{12}-\kappa\gamma \leq f(z)$. 

Then we fix $\hat x_i := \hat u_i^\ast$ for $i = 1,2$ and
solve (if $k\geq 4$; if $k=3$,
the necessary modifications are straightforward)
\begin{equation*}
  \hat f_{1234}^*:=\min\{f(x,y):(x,y) \in
  S \cap ((\hat u_1^*,\hat u_2^*) \times \Z^2 \times
  \R^{(n-4)+d})\}.
\end{equation*}
We obtain a point $(\hat u_1^*,\dots,\hat u_4^*,x_5^*, \dots, x_n^*,y^*) \in
S$ with $\hat u^*_i\in\Z$ for $1\leq i\leq 4$ and for which:
\[
\tilde f^*_{1234}:=f(\hat u_1^*,\dots,\hat u_4^*,x_5^*, \dots, x_n^*,y^*)\leq\hat f_{1234}^* + \kappa\gamma\leq \hat f^*+\kappa\gamma.
\]
Now, if $\tilde f^*_{1234}-\kappa\gamma > f(z)$,
we can make our oracle output \textbf{b}. Thus, we assume that
$\tilde f^*_{1234}-\kappa\gamma \leq f(z)$ and fix
$\hat x_i := \hat u_i^\ast$ for $1 \le i \le 4$.
Iterating this procedure we arrive at the subproblem
(again, the procedure can easily be modified if $n$ is odd):
\begin{equation*}
\min\{f(x,y):(x,y) \in
  S \cap ((\hat u_1^*,\dots,\hat u_{n-2}^*) \times \Z^2 \times
  \R^d)\}.
\end{equation*}
Let $(\hat u_1^*,\dots,\hat u_n^*,y^*) \in \Z^n \times \R^d$
be an approximate optimal solution.
If we cannot interrupt the
  algorithm, i.e., if
  $f(\hat u_1^\ast,\dots,\hat u_n^\ast,y^\ast) \not \le (1+\alpha)
  f(z) + \kappa\gamma$, we replace $(\hat u_{n-3}^*,\hat u_{n-2}^*)$ by the second best point for the corresponding mixed-integer convex minimization problem. In view of Proposition~\ref{prop:kth point approx}, the accuracy that we can guarantee on the solution is only $2\kappa\gamma$, so the criterion to output \textbf{b} must be adapted accordingly. Then we proceed with the computation of $(\hat u_{n-1}^*,\hat u_n^*)$ and so on.

It is straightforward to verify that this approach results in a finite-time algorithm for the general case. In the worst case the procedure forces us to visit all integral points in $[-B,B]^n$. However, in the course of this procedure we always have a feasible solution and a lower bound at our disposal. Once the lower bound exceeds the value of a feasible solution we can stop the procedure. It is precisely the availability of both, primal and dual information, that makes us believe that the entire algorithm is typically much faster than enumerating all the integer points in $[-B,B]^n$. 

\bibliographystyle{amsalpha}
\bibliography{literature}

\end{document}